\documentclass[11pt,reqno]{amsart}
\usepackage{amssymb,amsmath,epsfig,mathrsfs, enumerate, xparse, mathtools}
\usepackage[pagewise]{lineno}
\usepackage{graphicx}\usepackage[normalem]{ulem}
\usepackage{fancyhdr}
\pagestyle{fancy}
\fancyhead[RO,LE]{\small\thepage}
\fancyhead[LO]{\small \emph{\nouppercase{\rightmark}}}
\fancyhead[RE]{\small \emph{\nouppercase{\rightmark}}}
\fancyfoot[L,R,C]{}

\allowdisplaybreaks

\usepackage[margin=2.5cm]{geometry}
\usepackage{hyperref}
\hypersetup{
 colorlinks   = true,
 urlcolor     = black,
 linkcolor    = blue,
 citecolor   = red ,
 bookmarksopen=true
}
\setlength{\topmargin}{-0.65in}
 \setlength{\textheight}{8.7in}
 \setlength{\textwidth}{7in}
 \setlength{\headheight}{20pt}
 \setlength{\headsep}{20pt}
 \setlength{\oddsidemargin}{-0.25in}
 \setlength{\evensidemargin}{-0.25in}
 \usepackage{tikz}
 \newcommand{\mb}{\mathbb}
 \newcommand{\mc}{\mathcal}
 
 \newcommand{\f}{\frac}
 \newcommand{\ld}{\lambda}

 \newcommand{\dd}{\partial}

 \newcommand{\nn}{\nonumber}

 \newcommand{\iy}{\infty}

 \newcommand{\tn}{\textnormal}
 \newcommand{\py}{  \partial_{x_{n+1}}^a} 
 \usepackage[notref,notcite]{}
 \usepackage[pagewise]{lineno}
 \newtheorem{theorem}{Theorem}[section]
 \newtheorem{lemma}[theorem]{Lemma}
 \newtheorem{proposition}[theorem]{Proposition}
 \newtheorem{corollary}[theorem]{Corollary}
 
 \newtheorem{remark}[theorem]{Remark}

 \numberwithin{equation}{section}

 \usepackage{hyperref}

 \newcommand{\R}{\rangle}

 \newcommand{\Rn}{\mathbb{R}^n}

 \newcommand{\bp}{\begin{prob}}
 	\newcommand{\bpr}{\begin{proof}}
 		\newcommand{\epr}{\end{proof}}

 	\newcommand{\ve}{\varepsilon}

 	\newcommand{\Rb}{\mathbb{R}}

		\usepackage{amsmath}
 	\theoremstyle{definition}

 	\title[vanishing order etc]{Sharp asymptotic of solutions to some nonlocal parabolic equations}

  \author{Agnid Banerjee}
\address{School of Mathematical and Statistical Sciences, Arizona State University, Tempe} \email[Agnid Banerjee]{agnidban@gmail.com}

\author{Abhishek Ghosh}
\address{Institute of Mathematics, Polish Academy of Sciences, Ul. \'Sniadeckich 8, 00-656 Warsaw, Poland.} \email[Abhishek Ghosh]{abhi170791@gmail.com; agosh@impan.pl}
\thanks{A.B is supported in part  by Department of Atomic Energy,  Government of India, under
project no.  12-R \& D-TFR-5.01-0520. A. G is supported by TIFR-CAM, Bangalore-560065, India.}

\keywords{}
\subjclass{35A02, 35B60, 35K05}

\begin{document}
		\begin{abstract}
		We show that if $u$ solves the fractional parabolic equation $(\partial_t - \Delta )^s u = Vu$ in $B_5 \times (-25, 0]$ ($0<s<1$) such that $u(\cdot, 0) \not\equiv 0$, then the maximal vanishing order of $u$ in space-time at $(0,0)$ is upper bounded by $C\left(1+\|V\|_{C^{1}_{(x,t)}}^{1/2s}\right)$. As $s \to 1$, it converges to the sharp maximal order of vanishing   due to Donnelly-Fefferman and Bakri.   This quantifies a space like strong unique continuation result recently proved in \cite{ABDG}.  The proof is achieved by means of a new quantitative Carleman estimate that we derive for the corresponding extension problem   combined with a quantitative monotonicity in time result  and a compactness argument.
		
	\end{abstract}
	
  \maketitle
  
  \tableofcontents

 	
 	%

  \section{Introduction}
In  $\Rn_x\times \mathbb R_t$ we consider the heat operator $H = \partial_t -\Delta_x$ and denote by $H^s$ its fractional power of order $s\in (0,1)$. In the recent work \cite[Theorem 1.1]{ABDG}, the following space like strong unique continuation result is proven.

\medskip

\noindent \textbf{Theorem A.}\label{abdg}
\emph{Let $u$ solve $H^s u=Vu$ in $B_1 \times (-1, 0]$ where $V \in C^{1}$ and $0<s<1$. If $u$ vanishes to infinite order in $(x,t)$ at $(0,0)$, i.e.}
\begin{equation}
||u||_{L^{\infty}(B_r \times (-r^2, 0])} = O(r^k),\ \text{for all $k \in \mathbb N$ as $r \to 0$,}\end{equation}

 \emph{then $u(\cdot, 0) \equiv 0$.}
 
 In order to provide the right context to the above result, we note that  an example of Frank Jones in \cite{Jr} shows that  in fact, there exists non-trivial solutions of the heat equation in $\mathbb R^{n+1}$ which are supported in a strip of the type $\mathbb R^{n} \times (t_1, t_2)$. If we restrict Jones' example to a finite cylinder, we infer that space-time propagation of zeros of infinite order fails for local solutions to parabolic equations. Therefore in view of such an example, only space-like propagation of zeros of infinite order claimed in Theorem A is the best possible, even for solutions to parabolic  PDE's (i.e. in the local case when $s \to 1$). In this connection, we mention that for local solutions to second order parabolic equations, space-like strong unique continuation results were proven in the remarkable works \cite{EF_2003, EFV_2006}. Theorem A above can thus be seen as a nonlocal counterpart of the ones in those papers.

 The purpose of this present work is to quantify the space-like strong unique continuation result stated  in Theorem A above, i.e. we address the following question:
 
 \medskip
 
 \noindent \textbf{Question A.} \emph{ Let $u$ solve $H^s u=Vu$ in $B_1 \times (-1, 0]$ where $V \in C^{1}$ and $0<s<1$.  Suppose $u(\cdot, 0) \not \equiv 0$. Then what is the maximal order of vanishing of $u$ at $(0,0)$?}

In this paper, we provide the following answer to this question. We denote $B_{r}$ to be the ball of radius $r$ centred at the origin in $\mathbb{R}^n,$ also let $\mathbb B_r^+= \{(z, z_{n+1})\in \mathbb{R}^n\times \mathbb{R}: |z|^2+|z_{n+1}|^2<r^2,  z_{n+1} >0\}$. For further notions and notations we refer to Section \ref{s:pr}. 

\subsection*{Statements of main result}
\begin{theorem}[Quantitative space-like strong unique continuation]\label{main}
Fix $0<s<1.$ Let $u \in \operatorname{Dom}(H^s)$ solve \begin{equation}\label{e0}
H^s u(x,t) = - V(x,t) u(x,t).
\end{equation} in $B_5 \times (-25, 0]$. Assume that $u(\cdot, 0)\not\equiv 0$ in $B_1.$ Then there exists $\tilde r= \tilde r(u)>0$ such that for all $r \leq \tilde r(u)$ one has
	\begin{align}\label{df}
		\int_{B_{r}\times (-r^2, 0]} u^2(x, t)dx\,dt \geq   r^{\mathcal{N}},
\end{align}
where $\mathcal{N}=M\left(\frac{1}{\int_{\mathbb B_1^+} U^2(X,0)x_{n+1}^adX}+ \operatorname{log}(M\Theta)+(\|V\|_1^{1/2s}+1)\right)$, $\Theta=\frac{\int_{\mathbb B_{5}^+\times [0, 25)} U^2( X, t)x_{n+1}^adXdt}{\int_{\mathbb B_{1}^+}U^2(X,0)x_{n+1}^adX},$ where $a:=1-2s,$ with $U$ being the solution to the backward extension problem   corresponding to $u$ as in \eqref{exprob} below and  
$$\|V\|_1\overset{def}{=}\|V\|_{L^{\infty}(B_5 \times (-25, 0])} +\|\nabla_x V\|_{L^{\infty}(B_5 \times (-25, 0])}+\|\partial_{t} V\|_{L^{\infty}(B_5 \times (-25, 0])}.$$ Here $M$ is some universal constant depending only on $n$ and $a$.
\end{theorem}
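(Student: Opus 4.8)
The strategy is to transfer the problem to the Caffarelli–Silvestre type extension, where the nonlocal equation $H^s u = -Vu$ becomes a degenerate local parabolic equation for $U(x,x_{n+1},t)$ on the half-space $\{x_{n+1}>0\}$ with a weighted Neumann condition encoding $Vu$ on the thin slice $\{x_{n+1}=0\}$. The key analytic tool is a \emph{quantitative Carleman estimate} for this extension operator with an explicit dependence on the weight parameter, which — after the standard substitution turning the Carleman weight into an Almgren–Poon type frequency — yields a doubling inequality for $U$ in space-time with constant controlled by $C(1+\|V\|_1^{1/2s})$. The role of $1/2s$ in the exponent is dimensional: the extension variable $x_{n+1}$ scales parabolically with weight $a=1-2s$, so the natural Carleman parameter $\tau$ must be taken of size comparable to $\|V\|_1^{1/2s}$ for the potential term $Vu$ to be absorbed by the main terms of the estimate.

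The proof then proceeds in the following steps. First, I would recall from \cite{ABDG} (or re-derive) that $u(\cdot,0)\not\equiv 0$ forces, via the quantitative monotonicity-in-time result, a lower bound on $\int_{\mathbb B_1^+}U^2(X,0)x_{n+1}^a\,dX$ in terms of the full weighted energy $\Theta$; this is what makes $\mathcal N$ finite and explains the $\log(M\Theta)$ and $1/\!\int U^2(X,0)$ contributions. Second, using the Carleman estimate I would establish a quantitative space-time doubling/frequency bound: there is $N_0 \le M(1+\|V\|_1^{1/2s}+\log(M\Theta)+1/\!\int_{\mathbb B_1^+}U^2(X,0)x_{n+1}^a\,dX)$ such that the generalized frequency $N(r)$ of $U$ at $(0,0)$ satisfies $N(r)\le N_0$ for all small $r$. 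Third, integrating the differential inequality for the frequency gives the upper bound $\int_{\mathbb B_r^+}U^2(X,t)x_{n+1}^a\,dX \ge c\, r^{N_0}\cdot(\text{energy at scale }1)$ for $r$ small, i.e. $U$ cannot vanish to order faster than $N_0$ in space-time. Fourth — and this is where the compactness argument enters — I would pass from the bulk bound on $U$ to the trace bound \eqref{df} on $u(\cdot,0)=U(\cdot,0,0)$: a blow-up/compactness argument rules out the degenerate scenario where $U$ is large in the bulk but its trace on $\{x_{n+1}=0,t=0\}$ is much smaller, using that a nonzero limiting solution of the homogeneous extension problem with vanishing trace on the thin slice at $t=0$ would contradict Theorem A.

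The main obstacle I expect is the derivation of the quantitative Carleman estimate itself with the \emph{sharp} $\tau\sim\|V\|_1^{1/2s}$ threshold — in particular tracking all the boundary terms on $\{x_{n+1}=0\}$ produced by integration by parts against the degenerate weight $x_{n+1}^a$, and showing the potential contribution $\int \tau^3 |U|^2$ versus $\int |VU|^2$ balances exactly at that power of $\|V\|_1$ rather than at a worse exponent; controlling the thin-slice terms requires a trace inequality adapted to the $A_2$-weight $x_{n+1}^a$ and careful choice of the conformal/Gaussian factor in the weight. A secondary difficulty is making the compactness argument in the last step genuinely quantitative — one must either run it with explicit constants or argue by contradiction against Theorem A while ensuring the limiting object still solves the extension problem in the appropriate weighted Sobolev space $\operatorname{Dom}(H^s)$; this is where the hypotheses $V\in C^1_{(x,t)}$ and $u(\cdot,0)\not\equiv 0$ are used crucially.
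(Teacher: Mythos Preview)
Your outline matches the paper's strategy in its broad strokes: extension problem, quantitative Carleman estimate with the sharp threshold $\alpha\gtrsim \|V\|_1^{1/2s}$, monotonicity in time, doubling, and a compactness step at the end. Two points, however, deserve attention because as written your plan would not yield the sharp exponent.

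First, your Step~2 aims for a \emph{space-time} doubling inequality for $U$ directly from the Carleman estimate. The paper explicitly explains (Section~\ref{dbcyl}) that following the Escauriaza--Fern\'andez--Vessella route from space-like to space-time doubling loses a logarithm: one obtains a doubling constant of order $\exp\big(M\|V\|_1^{1/2s}\log\|V\|_1\big)$ rather than $\exp\big(M\|V\|_1^{1/2s}\big)$. The paper's remedy is to use the Carleman estimate together with the monotonicity-in-time lemma to prove only a \emph{space-like} doubling inequality at $t=0$ (Theorem~\ref{db2}), which does come out with the sharp constant, and then to upgrade this to a space-time cylinder estimate by a separate compactness/blow-up argument (Theorem~\ref{main-doubling-cylinder}). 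So the compactness is used not once but twice: first to pass from the space-like doubling of $U(\cdot,0)$ on $\mathbb B_r^+$ to a lower bound on $\int_{\mathbb Q_r^+}U^2 x_{n+1}^a$, and then again (Section~\ref{s:proof}) to pass from the bulk cylinder to the thin trace $u=U(\cdot,0,\cdot)$ on $Q_r$. Your plan conflates these into a single step, and the first of the two is precisely where the log-loss would otherwise occur.

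Second, in the final blow-up the limiting object $U_0$ satisfies the \emph{homogeneous} extension problem with zero weighted Neumann data and vanishing Dirichlet trace on $\{x_{n+1}=0\}$; the contradiction comes from the weak unique continuation for this degenerate local problem (Proposition~\ref{wucp}), not from Theorem~A for the nonlocal equation. Invoking Theorem~A here would be circular, since Theorem~A concerns $u$, not the extension $U$, and the blow-up limit need not arise from any solution of $H^s u=-Vu$.
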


For a historical account,  we mention that for nonlocal elliptic equations of the type $(-\Delta)^s u = Vu$, a strong unique continuation result was obtained by Fall and Felli in \cite{FF}. Their analysis combined the frequency function approach in \cite{GL1, GL2} with the Caffarelli-Silvestre extension method in \cite{CS}. We also mention the interesting work of R\"uland \cite{Ru2} where instead the Carleman method is used together with \cite{CS} to obtain results similar to those in \cite{FF} but with weaker assumptions on the potential $V$. See also  \cite{Yu} where the case of nonlocal variable coefficient elliptic equations has been studied.  Finally, for global solutions of the nonlocal equation \eqref{e0}, a backward space-time strong unique continuation theorem was previously established by one of us with Garofalo in \cite{BG} ( see also the recent work \cite{FPS}) and more recently a space like strong unique continuation result for local solutions to \eqref{e0} as in Theorem A was proven in \cite{ABDG}. See also \cite{BS1} for a space-like strong unique continuation result for fractional parabolic Lam\'e type operators.  We also refer to \cite{AT} where the structure of the nodal set of solutions to \eqref{e0} has been studied. It is to be noted that in both the works \cite{ABDG, BG}, the approach is based on a monotonicity formula for an adjusted frequency function for the extension problem for $H^{s}$ ( which constitutes the non-local counterpart of the well known monotonicity forumula discovered by Poon in \cite{Po}) combined with an appropriate blowup argument with respect to the so-called Almgren type rescalings. Unique continuation for nonlocal equations have also found applications in the context of fractional inverse problems ( see for instance \cite{GSU,  LLR} and the references therein).

Now for a proper perspective on quantitative strong unique continuation results, we mention that in the papers \cite{DF1}, \cite{DF2}, Donnelly and Fefferman showed that if $u$ is an eigenfunction with eigenvalue  $\lambda$ on a  smooth, compact and connected $n$-dimensional Riemannian manifold $M$, then  the maximal vanishing order of $u$ is less than $C \sqrt{\lambda},$ where $C$ only depends on the manifold $M$. Using this estimate, they showed  that  if the Riemannian metric is real analytic, then  $H^{n-1}(\{x: u_{\lambda}(x)=0\})\leq C  \sqrt{\lambda},$ where $u_{\lambda}$ is the eigenfunction corresponding to $\lambda$ and therefore  gave a complete  answer to a famous  conjecture of Yau (\cite{Yau}) in the analytic setting. It is to be mentioned that in recent times, there has been some very interesting developments in the smooth setting as well, thanks to some breakthrough  works of Logunov and Malinnikova in \cite{L1, L2, LM}.   We note that the zero set of $u_{\lambda}$ is referred to as the nodal set. This order of vanishing is sharp. If, in fact, we consider $M = \mathbb S^n \subset \mathbb R^{n+1}$, and we take the spherical harmonic $Y_\kappa$ given by the restriction to $\mathbb S^n$ of the function $f(x_1,...,x_n,x_{n+1}) = \Re (x_1 + i x_2)^\kappa$, then one has $\Delta_{\mathbb S^n} Y_\kappa = - \lambda_\kappa Y_\kappa$, with $\lambda_\kappa = \kappa(\kappa+n-2)$, and the order of vanishing of $Y_\kappa$ at the North pole $(0,...,0,1)$ is precisely $\kappa = C \sqrt {\lambda_\kappa}$. 

In his work  \cite{Ku} ( see also \cite{Ku2}),  Kukavica considered the more general problem
\begin{equation}\label{e1}
\Delta u = V(x) u,
\end{equation}
where $V\in W^{1, \infty}$, and showed that the maximal vanishing order  of $u$ is bounded above by $C( 1+ \|V\|_{W^{1, \infty}})$. He also conjectured that the rate of vanishing order of $u$ is less than or equal to $C(1+ \|V\|_{L^{\infty}}^{1/2})$, which agrees with the Donnelly-Fefferman result when $V = - \lambda$. Employing Carleman estimates,  Bourgain and Kenig in \cite{BK} showed that the rate of  vanishing order  of $u$  is less than $C(1+ \|V\|_{L^{\infty}}^{2/3})$, and furthermore the exponent $\frac{2}{3}$ is sharp for complex  potentials $V$ based on a counterexample of Meshov (see \cite{Me}).

Not so long ago, the rate of vanishing order of $u$ has been shown to be less than $C(1+ \|V\|_{W^{1, \infty}}^{1/2})$  independently by Bakri in \cite{Bk} and Zhu in \cite{Zhu1}. Bakri's approach is based on  an extension of the Carleman method in \cite{DF1}.  On the other hand, Zhu's approach is based on a variant of the frequency function approach  employed by Garofalo and Lin in \cite{GL1}, \cite{GL2}, in the context of strong  unique  continuation problems. The approach of Zhu has been  subsequently extended in \cite{BG2}  to variable coefficient principal part with Lipschitz coefficients  where a similar quantitative  uniqueness result at the boundary of $C^{1, Dini}$ domains has been obtained. We would also like to mention that  in \cite{Zhu2}, an analogous  quantitative uniqueness result  has been established for solutions  to parabolic equations of the type
\begin{equation}\label{locpar}
\operatorname{div}(A(x,t) \nabla u) - u_t= Vu, \end{equation}
where $V \in C^{1}$  and $A(x,t) \in C^{2}$ by an adaption of an approach due to Vessella in \cite{Ve} ( see also \cite{EV}).  More precisely, in \cite{Zhu2} it is shown that if $u$ in a non-trivial solution to \eqref{locpar}   in $B_1 \times (-1,0)$, then one has 

\begin{equation}\label{nonvan}
\int_{B_r \times (-1, 0)} u^2 \geq C r^{C(1+ \|V\|_1^{1/2})},\ \ r\leq 1/2,\ C=C(u).
\end{equation}
It is to be mentioned that $C$ appearing in \eqref{nonvan} depends on some global quantities involving $u$. This  is optimal as examples in the plane such as  $u(z)= \Re(x_1+ix_2)^k$, show that even for harmonic functions, the vanishing order depends on global assumptions.  We also note that although \eqref{nonvan} generalizes the elliptic results, it doesn't quantify the local space like strong unique continuation result in \cite{EF_2003, EFV_2006} because \eqref{nonvan} constitutes a vanishing order estimate in space which is averaged over time.  Now  in a more recent work of one of us with Arya in \cite{AB1}, we have shown that if $u$ solves \eqref{locpar} in $B_1 \times (-1, 0]$ and $u(\cdot, 0) \not \equiv 0$, then the vanishing order of $u(\cdot, 0)$ can be upper bounded by $C(1+\|V\|_{C^1}^{1/2})$ which strictly refines the estimate of Zhu in \eqref{nonvan} above. The proof in \cite{AB1} is based on a novel quantitative version of the Carleman estimate in \cite{EF_2003, EFV_2006} which incorporates the $C^{1}$ norm of the  zero order perturbation $V$. Finally, we refer to the works \cite{CK} and \cite{KL} for  other variants of the  quantitative uniqueness results in the parabolic setting.

Now for nonlocal equations of the type 
\[
(-\Delta)^s u = Vu,\]
R\"uland in \cite{Ru1} showed that the vanishing order is proportional to $C_1\|V\|_{C^{1}}^{1/2s} + C_2$  which in the limit as $s \to 1$, exactly reproduces the result of Donnelly and Fefferman. This quantitative uniqueness estimate has also been applied to derive  a nonlocal  Landis type result in \cite{RW}, see also \cite{BG24} for a Landis-Oleinik-type result for the space-time fractional equation \eqref{e0} proving that exponential decay of super-Gaussian type is not possible at infinity. See also \cite{Zhu0} for vanishing order estimates for Steklov eigenfunctions which via the extension approach of Caffarelli and Silvestre in \cite{CS},   is related to the case $s=1/2$.  We also refer to  \cite{BL} for  earlier results on quantitative uniqueness for Steklov eigenvalue problems. 
\medskip

\noindent Now concerning fractional parabolic equations of the type \eqref{e0},  in a recent joint work of  one of us with Arya in \cite{AryaBan} we  have established the following nonlocal generalization of the estimate in \eqref{nonvan} for nontrivial solutions to \eqref{e0} in $B_1 \times (-1, 0)$
\begin{equation}\label{nonvan1}
\int_{B_r \times (-1, 0)} u^2 \geq C r^{C\left(1+ \|V\|_{C^1}^{1/2s}\right)},\ r\leq 1/2.
\end{equation}
The inequality \eqref{nonvan1} in \cite{AryaBan} is obtained  via a new quantitative Carleman estimate for the extension problem  \eqref{exprob} combined with a  boundary to bulk  propagation of smallness estimate.  Furthermore, since \eqref{nonvan1} in \cite{AryaBan} is derived by a direct  approach based on propagation of smallness estimate,  the constant $C=C(u)$ in \eqref{nonvan1} has an explicit dependence on the  function $U$  that solves \eqref{exprob} corresponding to $u$. On the other hand,  we observe that  in the estimate \eqref{df} in Theorem \ref{main}, $\tilde r(u)$ is implicit because an indirect compactness argument is crucially used in the proof. See the discussion in subsection \ref{key} below. However, since   \eqref{nonvan1} expresses  a vanishing order estimate in space which is averaged over time,  it doesn't quantify the space like strong unique continuation result Theorem A  in \cite{ABDG} which our main result Theorem \ref{main} does.

\subsection{Key ideas in the proof of Theorem \ref{main}:}\label{key}The key ingredients in the proof of Theorem \ref{main} are the following:

\begin{itemize}
	
	\item   A   generalization of the quantitative version of the Escauriaza-Fernandez-Vessella type Carleman estimate derived  in  \cite{AB1} to the setting of the fractional heat extension problem \eqref{exprob}.  See Theorem \ref{carl1} below. Such an estimate   has the precise quantitative dependence on the $C^{1}$ norm of the potential $V$  and  is the key novelty of our work. This entails a lot of new features not present in the local case dealt in \cite{AB1}. For instance, it turns out that  the class of weights $\sigma_s$  defined in Lemma \ref{sigma} that we use in the proof of  Theorem \ref{carl1} are tailored for the quantitative dependence on $\alpha$ that we obtain  in \eqref{al}. This is one of the major observations in this present work.  Over here, we also mention that a careful examination of the proof of the space like strong unique continuation result in \cite{ABDG} gives a  vanishing order  estimate which instead has an  exponential dependence  on the $C^{1}$ norm of $V$.

	\item The second key element in the proof of Theorem \ref{main} is the  quantitative monotonicity in time result in Lemma \ref{mont} below.  Lemma \ref{mont} in particular provides a sharp  ``quantitative passage"   of vanishing order information to $t=0$  of  the solution $U$ which solves \eqref{exprob}.  Now since our present setting involves a zero order perturbation of  a weighted Dirchlet to Neumann map as in \eqref{exprob},  the proof of such a result is significantly more  involved than its local counterpart in \cite{AB1} and is based on some delicate application of trace inequalities.

	\item After Theorem  \ref{carl1} and  Lemma \ref{mont} are established, we show that  a careful book keeping of the arguments in \cite{ABDG} leads to a quantitative space like  doubling inequality for solutions to \eqref{exprob} as in Theorem \ref{db2}. It is to be noted that in order to derive a space time doubling inequality from Theorem \ref{db2},  starting from Theorem \ref{db2},   if we follow the arguments  in \cite{EFV_2006} ( or its extension problem counterpart  in \cite{ABDG}), we would  get  an upper bound on the vanishing order proportional to   $(\|V\|_{1}^{1/2s}+1) \log(\|V\|_1+1)$ instead of $(\|V\|_1^{1/2s} +1)$. The difference in the nature of the ``space like'' and ``space time'' doubling constants is already seen in \cite[Theorem 3.5]{ABDG}. Therefore in order to circumvent this technical obstruction,  we instead argue by an indirect compactness argument to obtain a sharp  space time vanishing order estimate  for the solution $U$ to  \eqref{exprob}  as in Theorem \ref{main-doubling-cylinder} from Theorem \ref{db2}.	 This is another new feature of this work. Then by a further blowup argument which relates the solution $u$ of \eqref{e0} to the solution $U$ of  \eqref{exprob}, we derive Theorem \ref{main}. 
	\end{itemize}
	
	The paper is organized as follows. In section \ref{s:pr}, we introduce some basic notations and notions and gather some preliminary results that are relevant to this present work.  In section \ref{s:ce}, we prove our main Carleman estimate Theorem \ref{carl1} for solutions to \eqref{exprob}. In section \ref{s:quant}, we first prove a quantitative monotonicity in time result for solutions to \eqref{exprob} which then combined with the Carleman estimate in Theorem \ref{carl1} and  compactness  type arguments,  allows us to derive the sharp vanishing order estimate  in Theorem \ref{main-doubling-cylinder}. In section \ref{s:proof}, we finally prove our main result from Theorem \ref{main-doubling-cylinder}.

	\section{Preliminaries}\label{s:pr}
	
	In this section we introduce the relevant notation and gather some auxiliary  results that will be useful in the rest of the paper. Generic points in $\mathbb R^n \times \mathbb R$ will be denoted by $(x_0, t_0), (x,t)$, etc. For an open set $\Omega\subset \mathbb R^n_x\times \mathbb R_t$ we indicate with $C_0^{\infty}(\Omega)$ the set of compactly supported smooth functions in $\Omega$.  We also indicate by $H^{\alpha}(\Omega)$ the non-isotropic parabolic H\"older space with exponent $\alpha$ defined in \cite[p. 46]{Li}. The symbol $\mathscr S(\mathbb R^{n+1})$ will denote the Schwartz space of rapidly decreasing functions in $\mathbb R^{n+1}$. For $f\in \mathscr S(\mathbb R^{n+1})$ we denote its Fourier transform by 
\[
 \hat f(\xi,\sigma) = \int_{\mathbb R^n\times \mathbb R} e^{-2\pi i(\langle \xi,x\rangle + \sigma t)} f(x,t) dx dt = \mathscr F_{x\to\xi}(\mathscr F_{t\to\sigma} f).
\]
The heat operator in $\mathbb R^{n+1} = \mathbb R^n_x \times \mathbb R_t$ will be denoted by $H = \partial_t - \Delta_x$. Given a number $s\in (0,1)$ the notation $H^s$ will indicate the fractional power of $H$ that in  \cite[formula (2.1)]{Samko} was defined on a function $f\in \mathscr S(\mathbb R^{n+1})$ by the formula
\begin{equation}\label{sHft}
\widehat{H^s f}(\xi,\sigma) = (4\pi^2 |\xi|^2 + 2\pi i \sigma)^s\  \hat f(\xi,\sigma),
\end{equation}
with the understanding that we have chosen the principal branch of the complex function $z\to z^s$. We then introduce the natural domain for the operator $H^s$.
\begin{align}\label{dom}
\mathscr H^{2s} & =  \operatorname{Dom}(H^s)   = \{f\in \mathscr S'(\mathbb R^{n+1})\mid f, H^s f \in L^2(\mathbb R^{n+1})\}
\\
&  = \{f\in L^2(\mathbb R^{n+1})\mid (\xi,\sigma) \to (4\pi^2 |\xi|^2 + 2\pi i \sigma)^s  \hat f(\xi,\sigma)\in L^2(\mathbb R^{n+1})\},
\notag
\end{align} 
where the second equality is justified by \eqref{sHft} and Plancherel theorem. 
 It is important to keep in mind that definition \eqref{sHft} is equivalent to the one based on Balakrishnan formula (see \cite[(9.63) on p. 285]{Samko})
\begin{equation}\label{balah}
H^s f(x,t) = - \frac{s}{\Gamma(1-s)} \int_0^\infty \frac{1}{\tau^{1+s}} \big(P^H_\tau f(x,t) - f(x,t)\big) d\tau,
\end{equation}
where we have denoted by
\begin{equation}\label{evolutivesemi}
P^H_\tau f(x,t) = \int_{\Rn} G(x-y,\tau) f(y,t-\tau) dy = G(\cdot,\tau) \star f(\cdot,t-\tau)(x)
\end{equation}
the \emph{evolutive semigroup}, see \cite[(9.58) on p. 284]{Samko}. We refer to Section 3 in \cite{BG} for relevant details.

Henceforth, given a point $(x,t)\in \mathbb R^{n+1}$ we will consider the thick half-space $\mathbb R^{n+1} \times \mathbb R^+_{x_{n+1}}$. At times it will be convenient to combine the additional variable $x_{n+1}>0$ with $x\in \Rn$ and denote the generic point in the thick space $\Rn_x\times\mathbb R^+_{x_{n+1}}$ with the letter $X=(x,x_{n+1})$. For $x_0\in \mathbb R^n$ and $r>0$ we let $B_r(x_0) = \{x\in \Rn\mid |x-x_0|<r\}$,
$\mathbb B_r(X)=\{Z = (z,z_{n+1}) \in \mathbb R^n \times \mathbb R \mid |x-z|^2 + |x_{n+1}- z_{n+1}|^2 < r^2\}$. We also let $\mathbb B_r^+(X)= \mathbb B_r(X) \cap \{(z, z_{n+1}: z_{n+1} >0\}$. 
When the center $x_0$ of $B_r(x_0)$ is not explicitly indicated, then we are taking $x_0 = 0$. Similar agreement for the thick half-balls $\mathbb B_r^+(x_0,0)$. We will also use the $\mathbb Q_{r}$ for the set $\mathbb B_r \times [t_0,t_0+r^2)$ and $Q_r$ for the set  $ B_r \times [t_0,t_0+r^2).$ Likewise we denote  $\mathbb Q_r^+=\mathbb Q_r \cap \{(x,x_{n+1}): x_{n+1} > 0\}$. 
For notational ease $\nabla U$ and  $\operatorname{div} U$ will respectively refer to the quantities  $\nabla_X U$ and $ \operatorname{div}_X U$.  The partial derivative in $t$ will be denoted by $\partial_t U$ and also at times  by $U_t$. The partial derivative $\partial_{x_i} U$  will be denoted by $U_i$. At times,  the partial derivative $\partial_{x_{n+1}} U$  will be denoted by $U_{n+1}$.

 We next introduce the extension problem associated with $H^s$.  
 Given a number $a\in (-1,1)$ and a $u:\mathbb R^n_x\times \mathbb R_t\to \mathbb R$ we seek a function $U:\mathbb R^n_x\times\mathbb R_t\times \mathbb R_{x_{n+1}}^+\to \mathbb R$ that satisfies the boundary-value problem
\begin{equation}\label{la}
\begin{cases}
\mathscr{L}_a U \overset{def}{=} \partial_t (x_{n+1}^a U) - \operatorname{div} (x_{n+1}^a \nabla U) = 0,
\\
U((x,t),0) = u(x,t),\ \ \ \ \ \ \ \ \ \ \ (x,t)\in \mathbb R^{n+1}.
\end{cases}
\end{equation}
The most basic property of the Dirichlet problem \eqref{la} is that if $s = \frac{1-a}2\in (0,1)$ and $u \in \text{Dom}(H^{s})$, then we have the following convergence  in $L^{2}(\mathbb R^{n+1})$
\begin{equation}\label{np}
2^{-a}\frac{\Gamma(\frac{1-a}{2})}{\Gamma(\frac{1+a}{2})} \py U((x,t),0)=  - H^s u(x,t),
\end{equation}
where $\py$ denotes the weighted normal derivative
\begin{equation}\label{nder}
\py U((x,t),0)\overset{def}{=}   \lim\limits_{x_{n+1} \to 0^+}  x_{n+1}^a \partial_{x_{n+1}} U((x,t),x_{n+1}).
\end{equation}

When $a = 0$ ($s = 1/2$) the problem \eqref{la} was first introduced in \cite{Jr1} by Frank Jones, who in such case also constructed the relevant Poisson kernel and proved \eqref{np}. More recently Nystr\"om and Sande in \cite{NS} and Stinga and Torrea in \cite{ST} have independently extended the results in \cite{Jr1} to all $a\in (-1,1)$. 	

With this being said, we now suppose that $u$ be a solution to \eqref{e0}  and consider the weak solution $U$ of the following version of \eqref{la} (for the precise notion of weak solution of \eqref{wk} we refer to \cite[Section 4]{BG}) 
\begin{equation}\label{wk}
\begin{cases}
\mathscr{L}_a U=0 \ \ \ \ \ \ \ \ \ \ \ \ \ \ \ \ \ \ \ \ \ \ \ \ \ \ \ \ \ \ \text{in}\ \mathbb R^{n+1}\times \mathbb R^+_{x_{n+1}},
\\
U((x,t),0)= u(x,t)\ \ \ \ \ \ \ \ \ \ \ \ \ \ \ \ \text{for}\ (x,t)\in \mathbb R^{n+1},
\\
\py U((x,t),0)=  2^{a} \frac{\Gamma(\frac{1+a}{2})}{\Gamma(\frac{1-a}{2})} V(x,t) u(x,t)\ \ \ \ \text{for}\ (x,t)\in B_5 \times (-25,0].
\end{cases}
\end{equation}
 Note that the third equation in \eqref{wk} is justified by \eqref{e0} and \eqref{np}.   Further, in \cite[Lemma 5.3]{BG} the following regularity result for such weak solutions was proved. Such result will be relevant to our analysis.

\begin{lemma}\label{reg1}
Let $U$  be a weak solution of \eqref{wk} where $V  \in C^{1}$. Then there exists $\alpha>0$ such that one has up to the thin set $\{x_{n+1}=0\}$ 
\[
U_i,\ U_t,\ x_{n+1}^a U_{x_{n+1}}\ \in\ H^{\alpha}(\mathbb B_4^+ \times (-16, 0]),\ \ \ \  i=1,2,..,n.
\]
Moreover, the relevant H\"older norms are bounded by $\int_{\mathbb B_5^+ \times (-25, 0]} U^2 x_{n+1}^a dX dt$. 
\end{lemma}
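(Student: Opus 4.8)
\textbf{Proof proposal for Lemma \ref{reg1}.}

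The statement is essentially quoted from \cite[Lemma 5.3]{BG}, so the natural plan is to recall that argument in the present setting and explain the two extra wrinkles: the half-ball geometry and the bound of the H\"older norms by the weighted $L^2$ energy. First I would localize. Fix a cutoff $\eta\in C_0^\infty(\mathbb B_5^+\times(-25,0])$ with $\eta\equiv 1$ on $\mathbb B_4^+\times(-16,0]$, and set $W=\eta U$. Because $\mathscr L_a U=0$ in the interior and $\py U((x,t),0)=c_a V(x,t)u(x,t)$ on the thin set (here $c_a=2^a\Gamma(\tfrac{1+a}{2})/\Gamma(\tfrac{1-a}{2})$), $W$ is a weak solution of an inhomogeneous degenerate parabolic equation $\mathscr L_a W=F$ in $\mathbb R^{n+1}\times\mathbb R^+_{x_{n+1}}$ together with a Neumann-type condition $\py W((x,t),0)=c_a V u$ on the support of $\eta$, where $F=\partial_t(x_{n+1}^a \eta)\,U - \operatorname{div}(x_{n+1}^a(U\nabla\eta+\nabla\eta\, U)) + x_{n+1}^a\nabla\eta\cdot\nabla U$ collects the commutator terms. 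The point is that $F$ involves only $U$ and $\nabla U$ multiplied by smooth compactly supported weights, so by the Caccioppoli inequality for $\mathscr L_a$ (which gives $\int x_{n+1}^a|\nabla U|^2$ controlled by $\int x_{n+1}^a U^2$ on a slightly larger set), $F\in L^2(x_{n+1}^a\,dXdt)$ with norm bounded by $\int_{\mathbb B_5^+\times(-25,0]} U^2 x_{n+1}^a\,dXdt$.

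Next I would even-reflect across $\{x_{n+1}=0\}$. The standard device of Caffarelli–Silvestre type problems: the even extension of $W$ satisfies a degenerate parabolic equation in the $A_2$ Muckenhoupt weight $|x_{n+1}|^a$, with a right-hand side that now also carries the boundary term $c_a Vu$ appearing as a distribution supported on the thin set (more precisely one writes the weak formulation against test functions not vanishing on $\{x_{n+1}=0\}$). Then one invokes the De Giorgi–Nash–Moser theory for degenerate parabolic equations with $A_2$ weights — in the parabolic setting this is the work of Chiarenza–Serapioni and, for the boundary/Neumann regularity, the refined estimates used in \cite{BG} (and ultimately traceable to the elliptic theory of Fabes–Kenig–Serapioni together with its parabolic analogue). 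This yields $W\in H^{\alpha}$ for some $\alpha=\alpha(n,a)>0$ on $\mathbb B_4^+\times(-16,0]$, hence $U\in H^\alpha$ there.

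The heart of the matter — and the step I expect to be the main obstacle — is upgrading $U\in H^\alpha$ to H\"older continuity of the \emph{derivatives} $U_i$, $U_t$ and the weighted normal derivative $x_{n+1}^a U_{x_{n+1}}$ up to the thin set. For the tangential derivatives $U_i$ ($i=1,\dots,n$): differentiating the equation in $x_i$ is legitimate since the coefficients and the boundary datum $Vu$ are $C^1$ in $(x,t)$ (using $V\in C^1$ and the already-established interior/boundary continuity of $u=U(\cdot,\cdot,0)$); the function $U_i$ then solves the same type of degenerate equation $\mathscr L_a U_i = 0$ with Neumann datum $\py (U_i)((x,t),0) = c_a\big(\partial_{x_i}V\cdot u + V\cdot U_i((x,t),0)\big)$, which is bounded, so the same De Giorgi–Nash–Moser machinery applies and gives $U_i\in H^\alpha$. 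For $\partial_t U$ one argues similarly after differentiating in $t$, noting that $\partial_t V\in L^\infty$ feeds into the boundary term. For $x_{n+1}^a U_{x_{n+1}}$ one uses that, away from but up to the thin set, the quantity $v:=x_{n+1}^a U_{x_{n+1}}$ satisfies the \emph{conjugate} equation $\partial_t(x_{n+1}^{-a}v)-\operatorname{div}(x_{n+1}^{-a}\nabla v)=0$ (the usual $a\leftrightarrow -a$ duality for $\mathscr L_a$), which is again a degenerate parabolic equation with an $A_2$ weight; its Dirichlet datum on $\{x_{n+1}=0\}$ is $v((x,t),0)=\py U((x,t),0)=c_a Vu\in C^\alpha$, so boundary H\"older regularity for the $-a$ equation gives $v\in H^\alpha$ up to the thin set. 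Throughout, each application of the Harnack/oscillation estimate comes with a quantitative bound of the relevant H\"older seminorm by the $L^2(x_{n+1}^a)$ norm of the solution on the next larger cylinder, and by the Caccioppoli inequality all of these are ultimately dominated by $\int_{\mathbb B_5^+\times(-25,0]} U^2 x_{n+1}^a\,dXdt$, which is the asserted bound. The technical care needed is (i) justifying the differentiation of the weak formulation in $x_i$ and $t$ near the degenerate set, which is handled by difference quotients in the tangential/time directions (these do not interact with the $x_{n+1}$-degeneracy), and (ii) keeping the weight class $A_2$ under the reflection, which forces $a\in(-1,1)$ as assumed.
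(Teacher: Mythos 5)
The paper does not actually prove this lemma: it is quoted directly from \cite[Lemma 5.3]{BG}, so there is no internal argument to compare against, and your reconstruction (localization, even reflection across $\{x_{n+1}=0\}$, weighted parabolic De Giorgi--Nash--Moser theory, tangential/time difference quotients, and the conjugate $-a$ equation for $x_{n+1}^aU_{x_{n+1}}$ with H\"older Dirichlet datum $c_aVu$) is exactly the standard machinery behind that cited result, with the quantitative bound by $\int_{\mathbb B_5^+\times(-25,0]}U^2x_{n+1}^a\,dXdt$ following from Caccioppoli plus the quantitative oscillation estimates as you say. The one schematic point is that after differentiating in $x_i$ or $t$ the boundary condition is Robin-type (the datum contains the trace of $U_i$ or $U_t$ itself), so one must either invoke the boundary H\"older estimate for such Robin/oblique conditions or, as is done in \cite{BG}, first establish H\"older regularity of $u$ from the nonlocal equation so that the Neumann datum is genuinely H\"older and then bootstrap; with that caveat made precise your outline is correct.
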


We also need the following  weak unique continuation result from \cite{LLR} which is needed to analyse the blowup limit in the proof of Theorem \ref{main}. We refer the reader to the proof of Proposition 5.6 in \cite{LLR} where such a result has been established. See also \cite[Corollary 1.2]{BG1} where the same result has been derived as a consequence of the space like analyticity of  solutions to  \[\operatorname{div}(|x_{n+1}|^a \nabla U_0) -|x_{n+1}|^a \partial_t U_0=0\] that are symmetric in $x_{n+1}$ variable across $\{x_{n+1}=0\}$.
\begin{proposition}\label{wucp}
Let $U_0$ be a  weak solution to
\begin{equation}\label{homog}
\begin{cases}
\operatorname{div}(x_{n+1}^a \nabla U_0) -x_{n+1}^a \partial_t U_0=0\ \text{ in  $\mathbb B_1^+ \times (-1, 0]$,}
\\
\py U_0((x,t),0) \equiv 0\ \text{for all $(x,t)  \in B_1 \times (-1, 0]$,}
\end{cases}
\end{equation}
such that $U_0((x,t),0) \equiv 0$ for all $(x,t) \in B_1 \times (-1, 0]$. 
Then $U_0 \equiv 0$ in $\mathbb B_1^+ \times (-1, 0]$.
\end{proposition}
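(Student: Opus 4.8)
The statement to prove is Proposition~\ref{wucp}, a weak unique continuation result: a weak solution $U_0$ of the homogeneous degenerate parabolic extension problem with vanishing Dirichlet and weighted Neumann data on the thin set must vanish identically.

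\medskip

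The plan is to reduce the problem to a known analyticity statement rather than to argue from scratch. First I would observe that the two thin-set conditions $U_0(x,0,t)\equiv 0$ and $\py U_0(x,0,t)\equiv 0$ on $B_1\times(-1,0]$ together force a trivial even reflection: since $U_0$ has vanishing weighted co-normal derivative, the even extension $\widetilde U_0(x,x_{n+1},t) := U_0(x,|x_{n+1}|,t)$ is a weak solution of $\operatorname{div}(|x_{n+1}|^a\nabla \widetilde U_0) - |x_{n+1}|^a\partial_t \widetilde U_0 = 0$ across $\{x_{n+1}=0\}$ in the full (two-sided) cylinder $\mathbb B_1\times(-1,0]$ — this is the standard Caffarelli--Silvestre/Stinga--Torrea reflection principle, valid because the Neumann data is zero so no distributional source appears on the thin set. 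Then $\widetilde U_0$ additionally vanishes on the $n$-dimensional slice $\{x_{n+1}=0\}$.

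\medskip

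Next I would invoke the space-like analyticity of such symmetric solutions, which is exactly the content of \cite[Corollary 1.2]{BG1} cited in the excerpt: solutions to the symmetric degenerate equation are real-analytic in the space variables $X=(x,x_{n+1})$ (locally uniformly in $t$). Given analyticity in $X$, the vanishing of $\widetilde U_0$ on the codimension-one analytic set $\{x_{n+1}=0\}$ for every fixed $t\in(-1,0]$ propagates: writing the Taylor expansion of $\widetilde U_0(\cdot,t)$ about a point of the thin set and using both that all pure tangential derivatives vanish (since $\widetilde U_0\equiv 0$ on the slice) and that evenness in $x_{n+1}$ kills the odd $x_{n+1}$-derivatives, one shows inductively on the order of $\partial_{x_{n+1}}$ — using the PDE to trade a second $x_{n+1}$-derivative (with its degenerate weight) against tangential derivatives and the $t$-derivative, all of which annihilate $\widetilde U_0$ on the slice — that every derivative of $\widetilde U_0$ vanishes on $\{x_{n+1}=0\}$. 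Analyticity then gives $\widetilde U_0(\cdot,t)\equiv 0$ on a neighborhood, and a connectedness/continuation argument in $X$ extends this to all of $\mathbb B_1^+$; doing this for each $t$ yields $U_0\equiv 0$ in $\mathbb B_1^+\times(-1,0]$. Alternatively, and more cheaply, one simply quotes \cite[Proposition 5.6]{LLR} directly, which already packages precisely this conclusion.

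\medskip

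The main obstacle, and the only place real work is needed, is the degeneracy of the weight $x_{n+1}^a$ at the thin set: one cannot naively differentiate the equation in $x_{n+1}$ because of the $A_2$-weight singularity. The clean way around this is precisely the even-reflection step followed by the appeal to the established analyticity result, so that all the delicate weighted estimates are outsourced to \cite{BG1} (or the whole proposition to \cite{LLR}). I would therefore present the proof as: (i) verify the reflection is admissible given the two homogeneous thin-set conditions; (ii) note $\widetilde U_0$ vanishes on $\{x_{n+1}=0\}$; (iii) invoke space-like analyticity to conclude $\widetilde U_0\equiv 0$, hence $U_0\equiv 0$. I would keep the exposition short, citing \cite[Proposition 5.6]{LLR} and \cite[Corollary 1.2]{BG1} as already done in the excerpt, since re-deriving the analyticity would be a paper in itself.
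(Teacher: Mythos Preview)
Your proposal is correct and aligns exactly with the paper's own treatment: the paper does not give an independent proof of Proposition~\ref{wucp} but simply refers to \cite[Proposition~5.6]{LLR} and \cite[Corollary~1.2]{BG1}, the latter via the even-reflection and space-like analyticity argument you outline. Your exposition adds a helpful sketch of why the analyticity route works, but the approach is the same.
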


For notational purposes it will be convenient to work with the following backward version of problem \eqref{wk}.	
	
	\begin{equation}\label{exprob}
		\begin{cases}
			x_{n+1}^a \partial_t U + \operatorname{div}(x_{n+1}^a \nabla U)=0\ \ \ \ \ \ \ \ \ \ \ \ \text{in} \ \mb{B}_5^+ \times [0, 25),
			\\	
			U((x,t), 0)= u(x,t)
			\\
			\py U((x,t), 0)= Vu\ \ \ \ \ \ \ \ \ \text{in}\ B_5 \times [0,25).
		\end{cases}
	\end{equation}
	
	We note that the former can be transformed into the latter by changing $t \to -t$. 
	
The corresponding extended backward parabolic operator will be denoted as
	\begin{align}\label{extop}
		\widetilde{\mc{H}}_s & := x_{n+1}^a \dd_t + \tn{div} \left( x_{n+1}^a \nabla \right).
	\end{align}	
	
	We now state a few key auxiliary results that will be needed in the proof of our main Carleman estimate in Theorem \ref{carl1}.  The following lemma below provides us with the appropriate Carleman weight function that is tailored for deriving  our main Carleman estimate \eqref{care1}  with the quantitative dependence on $\alpha$ as in \eqref{al}, where the large parameter $\alpha$ is related to the order of vanishing. As previously mentioned in the introduction, this is one of the key observations in this work.

\begin{lemma}[\cite{EF_2003}] \label{sigma}
	Let $s\in (0, 1).$ Define
		\begin{equation}\label{theta'}\theta_{s}(t) = t^{s} \left( \log \f{1}{t} \right)^{1+s}.\end{equation}  Then the solution to the ordinary differential equation 
		$$\frac{d}{dt} \log \left(\frac{\sigma_{s}}{t\sigma_{s}'}\right)= \frac{\theta_{s}(\lambda t)}{t},~\sigma_{s}(0)=0,~\sigma_{s}'(0)=1,$$
		where $\lambda >0,$ has the following properties when $0\leq \lambda t\leq 1/2$:
		\begin{enumerate}
			\item $t e^{-N} \leq \sigma_{s}(t) \leq t,$
			\item $e^{-N} \leq \sigma_{s}'(t)\leq 1,$
			\item $|\partial_t[\sigma_{s} \log \frac{\sigma_{s}}{\sigma_{s}' t}]|+|\partial_t[\sigma_{s} \log \frac{\sigma_{s}}{\sigma_{s}' }]|\leq 3N$,
			\item $\left|\sigma_{s} \partial_t \left(\frac{1}{\sigma_{s}'}\partial_t[\log \frac{\sigma_{s}}{\sigma_{s}'(t)t}]\right)\right| \leq 3N e^{N} \frac{\theta_{s}(\gamma t)}{t},$
		\end{enumerate}	
		where $N$ is some universal constant. 
	\end{lemma}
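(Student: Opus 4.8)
The whole statement can be reduced to a single auxiliary function. I would set
$$\psi(t):=\log\!\Big(\frac{\sigma_{s}(t)}{t\,\sigma_{s}'(t)}\Big),$$
so that the defining ODE reads exactly $\psi'(t)=\theta_{s}(\lambda t)/t$, while the normalization $\sigma_{s}(0)=0,\ \sigma_{s}'(0)=1$ forces $\sigma_{s}(t)=t+o(t)$ near $0$, hence $\psi(0)=\log 1=0$. Integrating,
$$\psi(t)=\int_{0}^{t}\frac{\theta_{s}(\lambda\tau)}{\tau}\,d\tau=\int_{0}^{\lambda t}r^{s-1}\Big(\log\tfrac{1}{r}\Big)^{1+s}\,dr .$$
On the range $0\le\lambda t\le 1$ the integrand is nonnegative, so $\psi\ge 0$; and since $s>0$ the integral $\int_{0}^{1}r^{s-1}(\log\frac{1}{r})^{1+s}\,dr$ is finite, so $\psi(t)\le N$ for a constant $N=N(s)$. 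This is precisely where the exponent $1+s$ in $\theta_{s}$ is used: it is what makes this integral — and the companion integrals below — converge with the correct $s$-dependence, which is exactly the tailoring of $\sigma_s$ emphasized in the introduction.

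Next I would recover $\sigma_{s}$ from $\psi$. The definition of $\psi$ gives $\frac{\sigma_{s}'}{\sigma_{s}}=\frac{1}{t}e^{-\psi}$, i.e. $\frac{d}{dt}\log\frac{\sigma_{s}(t)}{t}=\frac{e^{-\psi(t)}-1}{t}$; since $|e^{-\psi(\tau)}-1|\le\psi(\tau)$ and, by Fubini, $\int_{0}^{t}\frac{\psi(\tau)}{\tau}\,d\tau=\int_{0}^{\lambda t}r^{s-1}(\log\frac1r)^{1+s}\log\frac{\lambda t}{r}\,dr\le\int_{0}^{1}r^{s-1}(\log\frac1r)^{2+s}\,dr<\infty$, the right-hand side is integrable near $0$ and we may integrate from $0$ (using $\sigma_{s}/t\to1$):
$$\log\frac{\sigma_{s}(t)}{t}=\int_{0}^{t}\frac{e^{-\psi(\tau)}-1}{\tau}\,d\tau .$$
Because $\psi\ge 0$ the integrand is $\le 0$, giving $\sigma_{s}(t)\le t$, and the bound above gives $\big|\log\frac{\sigma_{s}(t)}{t}\big|\le\int_{0}^{1}r^{s-1}(\log\frac1r)^{2+s}\,dr$. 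Enlarging $N$ to absorb this constant yields property (1). Property (2) is then immediate from $\sigma_{s}'=t^{-1}\sigma_{s}e^{-\psi}$ combined with $e^{-N}\le\sigma_{s}/t\le1$ and $e^{-N}\le e^{-\psi}\le1$, after one last relabelling of $N$.

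For (3) and (4) I would express every composite quantity through $\sigma_{s},\sigma_{s}',\psi$ and $\psi'=\theta_{s}(\lambda t)/t$, using the identities $\log\frac{\sigma_{s}}{\sigma_{s}'t}=\psi$,$\ \frac{\sigma_{s}}{\sigma_{s}'}=t\,e^{\psi}$, and the closed form $\sigma_{s}''=\sigma_{s}'\big(\frac{e^{-\psi}-1}{t}-\psi'\big)$ obtained by differentiating $\sigma_{s}'=t^{-1}\sigma_{s}e^{-\psi}$. Then (3) follows from the Leibniz rule: $\partial_{t}(\sigma_{s}\psi)=\sigma_{s}'\psi+\sigma_{s}\psi'$ with $\sigma_{s}'\psi\le N$ by (1)–(2) and $\sigma_{s}\psi'\le t\cdot t^{-1}\theta_{s}(\lambda t)=\theta_{s}(\lambda t)\le\sup_{(0,1]}\theta_{s}=:c(s)$, the second bracket in (3) being handled the same way once it too is rewritten through $\psi$. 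For (4), substituting the closed forms one gets, after a short computation,
$$\sigma_{s}\,\partial_{t}\!\Big(\frac{1}{\sigma_{s}'}\,\partial_{t}\log\frac{\sigma_{s}}{\sigma_{s}'t}\Big)=e^{\psi}\Big[t\psi''-\psi'\big(e^{-\psi}-1\big)+t(\psi')^{2}\Big],$$
so, using $e^{\psi}\le e^{N}$ and $|e^{-\psi}-1|\le\psi\le N$, the claim reduces to showing that $t|\psi''(t)|$, $\ \psi'(t)$ and $t\,\psi'(t)^{2}$ are each bounded by a fixed multiple of $\theta_{s}(\gamma t)/t$ on $0<\lambda t\le1$. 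These are elementary once $\theta_{s}$ is differentiated explicitly, $\theta_{s}'(r)=r^{s-1}(\log\frac1r)^{s}\big(s\log\frac1r-(1+s)\big)$, provided the parameter $\gamma$ on the right is taken to be a fixed fraction of $\lambda$ (say $\gamma=\lambda/e$): this is exactly what absorbs the extra $\log\frac1{\lambda t}$ factors produced by the differentiation near the endpoint $\lambda t=1$, where $\theta_{s}(\lambda t)\to0$.

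The first two paragraphs are essentially bookkeeping around two convergent integrals; the only genuine obstacle is the fourth estimate, where the right-hand side must come out with the sharp weight $\theta_{s}(\gamma t)/t$ rather than something a logarithm larger. Making this work is precisely the balance that the exponent $1+s$ in $\theta_{s}$, together with the mild slack between $\lambda$ and $\gamma$, is engineered to achieve; everything else is routine differentiation and the elementary pointwise bounds on $\theta_{s}$ and $\theta_{s}'$ on $(0,1]$.
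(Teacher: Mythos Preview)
Your approach is genuinely different from the paper's. The paper does not prove the four properties directly; it simply checks that $\theta_{s}$ satisfies three abstract hypotheses---(i) $0\le\theta_{s}\le N$ on $(0,1]$, (ii) $|t\theta_{s}'/\theta_{s}|\le N$, and (iii) $\int_{0}^{1}(1+\log\tfrac{1}{t})\tfrac{\theta_{s}(t)}{t}\,dt<\infty$---and then quotes Lemma~4 of \cite{EF_2003} as a black box. You instead reconstruct that black-box lemma from scratch via the auxiliary $\psi=\log(\sigma_{s}/(t\sigma_{s}'))$. The two convergent integrals driving your argument, $\int_{0}^{1}r^{s-1}(\log\tfrac1r)^{1+s}dr$ and $\int_{0}^{1}r^{s-1}(\log\tfrac1r)^{2+s}dr$, are exactly the content of the paper's condition~(iii) (after the substitution $r=e^{-t}$), and your pointwise bounds on $\theta_{s}$ and $\theta_{s}'$ match conditions~(i) and~(ii). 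So your proof is essentially a specialization of the cited lemma's proof to this particular $\theta_{s}$; it is longer but self-contained and makes transparent why the exponent $1+s$ is the right one.

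One genuine slip: the second bracket in property~(3) is \emph{not} ``handled the same way'' as the first. Since $\log(\sigma_{s}/\sigma_{s}')=\psi+\log t$, the Leibniz expansion of $\partial_{t}[\sigma_{s}\log(\sigma_{s}/\sigma_{s}')]$ produces the term $\sigma_{s}'\log t$, and with $\sigma_{s}'\to 1$ this diverges as $t\downarrow 0$. The bound in~(3) therefore cannot hold uniformly down to $t=0$; it is only valid on ranges where $t$ is bounded below (which is how the lemma is actually used in the Carleman estimate, over $t\ge c>0$). The paper sidesteps this by citing \cite{EF_2003}, so the issue lies in the statement rather than in your method, but your parenthetical dismissal is not correct as written.
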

\begin{proof}
We need to verify the following three conditions in order to apply the Lemma~4 in \cite{EF_2003}:
\begin{enumerate}[i)]
\item
Since $\theta_{s}(t)\to 0,$ as $t\to 0$ we clearly have $0\leq \theta_{s}(t)\leq N$ for some universal $N,$ possibly depending on $s.$
\item
Observe that
\begin{align}
\bigg|\frac{t \theta'(t)}{\theta(t)}\bigg|\leq \f{st^s \left(\log \f{1}{t}\right)^{1+s}+t^s (1+s)\left(\log \f{1}{t}\right)^{s}}{t^s \left(\log \f{1}{t}\right)^{1+s}}\leq N,
\end{align}
for $0< \lambda t\leq 1/2.$
\item
Performing a change of variables and using $s>0$ we obtain, $$\int_{0}^{1}\left(1+\log \f{1}{t}\right)\frac{\theta_{s}(t)}{t}=\int_{0}^{\infty}(1+t)e^{-ts} t^{1+s}\, dt\leq N<\infty.$$
\end{enumerate}
Now we apply Lemma~4 in \cite{EF_2003} to conclude our result.
\end{proof}
	
	 We also need the following Hardy type inequality in the Gaussian space which  can be found in Lemma 2.2 in \cite{ABDG}.  This can be regarded as the weighted analogue of  Lemma 3 in \cite{EFV_2006}.
	\begin{lemma}[Hardy type inequality]\label{hardy}
		For all $h \in C_0^{\infty}(\overline{\mb{R}^{n+1}_+})$ and $b>0$ the following inequality holds
		\begin{align*}
			& \int_{\mb{R}^{n+1}_+} x_{n+1}^a h^2 \frac{|X|^2}{8b} e^{-|X|^2/4b}dX \leq  2b \int_{\mb{R}^{n+1}_+} x_{n+1}^a|\nabla h|^2 e^{-|X|^2/4b} dX
			\\
			& + \frac{n+1+a}{2} \int_{\mb{R}^{n+1}_+}x_{n+1}^a h^2  e^{-|X|^2/4b}  dX.
		\end{align*}
	\end{lemma}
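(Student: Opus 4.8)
The plan is to reduce the inequality to a single integration-by-parts identity for the radial vector field weighted by the Gaussian and by the degenerate density $x_{n+1}^a$, and then to close the estimate with Young's inequality.

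First I would compute, on $\mb{R}^{n+1}_+$, the divergence of the vector field $W:=x_{n+1}^a\,e^{-|X|^2/4b}\,X$. Using $\nabla\big(x_{n+1}^a e^{-|X|^2/4b}\big)\cdot X=\big(a-\tfrac{|X|^2}{2b}\big)x_{n+1}^a e^{-|X|^2/4b}$ together with $\operatorname{div}X=n+1$, one obtains the pointwise identity
\[
\operatorname{div}W=(n+1+a)\,x_{n+1}^a e^{-|X|^2/4b}-\frac{|X|^2}{2b}\,x_{n+1}^a e^{-|X|^2/4b}.
\]
Multiplying by $h^2$, integrating over $\mb{R}^{n+1}_+$, and integrating by parts in the term containing $\operatorname{div}W$ gives
\[
\int_{\mb{R}^{n+1}_+}\frac{|X|^2}{2b}\,x_{n+1}^a h^2 e^{-|X|^2/4b}\,dX=(n+1+a)\int_{\mb{R}^{n+1}_+}x_{n+1}^a h^2 e^{-|X|^2/4b}\,dX+2\int_{\mb{R}^{n+1}_+}x_{n+1}^a h\,\nabla h\cdot X\,e^{-|X|^2/4b}\,dX.
\]
No boundary term survives: on $\{x_{n+1}=0\}$ the normal component of $W$ equals $x_{n+1}^{a+1}e^{-|X|^2/4b}$, which vanishes there since $a+1>0$ (recall $a\in(-1,1)$), and there is no contribution at infinity because $h$ has compact support.

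Next I would bound the cross term by the elementary inequality $2|h||\nabla h||X|\le\tfrac1{4b}h^2|X|^2+4b|\nabla h|^2$, so that
\[
2\int_{\mb{R}^{n+1}_+}x_{n+1}^a h\,\nabla h\cdot X\,e^{-|X|^2/4b}\,dX\le\frac1{4b}\int_{\mb{R}^{n+1}_+}x_{n+1}^a h^2|X|^2 e^{-|X|^2/4b}\,dX+4b\int_{\mb{R}^{n+1}_+}x_{n+1}^a|\nabla h|^2 e^{-|X|^2/4b}\,dX.
\]
Substituting this into the identity and absorbing $\tfrac1{4b}\int x_{n+1}^a h^2|X|^2 e^{-|X|^2/4b}\,dX$ into the left-hand side leaves the coefficient $\tfrac1{2b}-\tfrac1{4b}=\tfrac1{4b}$ on that integral; dividing the resulting inequality by $2$ produces exactly the claimed estimate.

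The argument is essentially routine, and the only point that genuinely requires care is the integration by parts across the degenerate boundary weight $x_{n+1}^a$. I would make it rigorous by first integrating over the region $\{x_{n+1}>\delta\}$, where $W$ is smooth, and then letting $\delta\to0$: the boundary terms generated on $\{x_{n+1}=\delta\}$ carry a factor $\delta^{a+1}\to0$, while all the volume integrals converge by dominated convergence since $h\in C_0^{\infty}(\overline{\mb{R}^{n+1}_+})$ and $a>-1$ makes $x_{n+1}^a$ locally integrable. This limiting step is the part I would be most careful to spell out; everything else is the computation above.
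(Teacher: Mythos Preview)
Your proof is correct and complete; the divergence identity, the Young-inequality absorption, and the treatment of the boundary term at $\{x_{n+1}=0\}$ (via the factor $x_{n+1}^{a+1}$ with $a+1>0$) are all sound. The paper itself does not supply a proof but simply cites \cite[Lemma 2.2]{ABDG} (the weighted analogue of \cite[Lemma 3]{EFV_2006}); your argument is exactly the standard integration-by-parts derivation that appears in those references.
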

The next lemma is crucially needed in the proof of the doubling inequality for \eqref{exprob}.
The proof is completely analogous to that of \cite[Lemma 4]{EFV_2006} for the case $a=0$, so we omit it.

	\begin{lemma}\label{do}
		Assume that $N \ge 1,$ $h \in C_{0}^{\infty}(\overline{\mathbb R^{n+1}_+})$ and the inequality \begin{align*}
			2b \int_{\mathbb R^{n+1}_+} x_{n+1}^a|\nabla h|^2 e^{-|X|^2/4b}dX + \frac{n+1+a}{2}\int_{\mathbb{R}^{n+1}_+}x_{n+1}^a h^2 e^{-|X|^2/4b}dX \le N  \int_{\mathbb{R}^{n+1}_+} x_{n+1}^ah^2  e^{-|X|^2/4b}dX
		\end{align*}
		holds for $a \le \frac{1}{12N }.$ Then
		\begin{align}
			\int_{\mathbb B_{2r}^+}h^2 x_{n+1}^a dX \le e^N  \int_{\mathbb B_{r}^+}h^2 x_{n+1}^a dX 
		\end{align}
		when $0 < r \le 1/2.$
	\end{lemma}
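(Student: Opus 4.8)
The plan is to carry out the classical argument of \cite[Lemma 4]{EFV_2006} --- a Poon-type frequency bound at a fixed Gaussian scale forces a doubling inequality --- threading the weight $x_{n+1}^{a}\,dX$ through every step; the one genuinely weighted analytic ingredient, a Hardy inequality in Gaussian space, has already been isolated as Lemma~\ref{hardy}. I would work with the Gaussian mass
\[
F(b) \;:=\; \int_{\mathbb{R}^{n+1}_+} x_{n+1}^{a}\, h^{2}(X)\, e^{-|X|^{2}/4b}\, dX, \qquad b>0,
\]
which is smooth and increasing in $b$ and tends to $0$ as $b\to 0^{+}$. First, since the left-hand side of the assumed inequality dominates $\int_{\mathbb{R}^{n+1}_+} x_{n+1}^{a} h^{2}\,\frac{|X|^{2}}{8b}\, e^{-|X|^{2}/4b}\, dX$ by Lemma~\ref{hardy}, the assumption gives $\int_{\mathbb{R}^{n+1}_+} x_{n+1}^{a} h^{2} |X|^{2} e^{-|X|^{2}/4b}\,dX \le 8Nb\,F(b)$. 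Differentiating under the integral sign, $F'(b)=\frac{1}{4b^{2}}\int_{\mathbb{R}^{n+1}_+} x_{n+1}^{a} h^{2} |X|^{2} e^{-|X|^{2}/4b}\,dX \le \frac{2N}{b}\,F(b)$, so $b\mapsto b^{-2N}F(b)$ is non-increasing, and therefore $F(b_{2})\le (b_{2}/b_{1})^{2N}F(b_{1})$ for $0<b_{1}\le b_{2}$ in the range of $b$ for which the assumption is posited (in the application this is all sufficiently small $b$).

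Next I would pass from $F$ to the solid half-balls. Since $e^{-|X|^{2}/4b}\ge e^{-r^{2}/b}$ on $\mathbb{B}_{2r}^{+}$, one has $\int_{\mathbb{B}_{2r}^{+}} x_{n+1}^{a} h^{2}\, dX \le e^{r^{2}/b}\,F(b)$. For the reverse comparison, the Gaussian tail is controlled by the $|X|^{2}$-estimate: $\int_{\mathbb{R}^{n+1}_+\setminus \mathbb{B}_{r}^{+}} x_{n+1}^{a} h^{2} e^{-|X|^{2}/4b}\, dX \le r^{-2}\int_{\mathbb{R}^{n+1}_+} x_{n+1}^{a} h^{2} |X|^{2} e^{-|X|^{2}/4b}\,dX \le \frac{8Nb}{r^{2}}\,F(b)$, so for $b\le \frac{r^{2}}{16N}$ one gets $F(b)\le 2\int_{\mathbb{B}_{r}^{+}} x_{n+1}^{a} h^{2}\, dX$. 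Combining these with the monotonicity above (taking $b_{1}\le \frac{r^{2}}{16N}\le b_{2}$, with $b_{2}$ a fixed multiple of $b_{1}$, and optimizing that multiple) yields $\int_{\mathbb{B}_{2r}^{+}} x_{n+1}^{a} h^{2}\, dX \le e^{cN}\int_{\mathbb{B}_{r}^{+}} x_{n+1}^{a} h^{2}\, dX$ for a universal $c$; calibrating the free scale $b$ precisely as in \cite[Lemma 4]{EFV_2006} pins the exponent to the stated value.

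Two points on the hypotheses: all integrations by parts in the $x_{n+1}$-variable are boundary-free, since $h\in C_{0}^{\infty}(\overline{\mathbb{R}^{n+1}_+})$ and the weight factor $x_{n+1}^{a+1}$ vanishes on $\{x_{n+1}=0\}$ for every $a>-1$; and the smallness $a\le \frac{1}{12N}$ is what allows the weight-dependent error terms (the ones carrying a factor $a$) to be absorbed into the main terms along the way, so the weighted computation reproduces the unweighted conclusion of \cite[Lemma 4]{EFV_2006}. I do not expect a conceptual obstacle here: the scheme is exactly that of \cite[Lemma 4]{EFV_2006}, and Lemma~\ref{hardy} already packages the only non-elementary weighted estimate. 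The step that requires genuine care --- and is the reason it is reasonable to omit the details --- is the bookkeeping of constants: making the doubling exponent come out as $N$ rather than a universal multiple of it, which is precisely the optimal choice of the parameter $b$.
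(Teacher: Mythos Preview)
Your approach is correct and is exactly the weighted analogue of \cite[Lemma~4]{EFV_2006} that the paper intends; the paper omits the proof entirely, citing that analogy.

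One correction to your commentary, however: the condition ``$a\le \tfrac{1}{12N}$'' in the hypothesis is a typo for ``$b\le \tfrac{1}{12N}$''. The assumed inequality must hold over a \emph{range} of Gaussian scales $b$, which is precisely what you use when you integrate the differential inequality $F'(b)\le \tfrac{2N}{b}F(b)$ and when you pick $b$ proportional to $r^{2}/N$; you already recognize this (``in the application this is all sufficiently small $b$''). Your closing remark that the smallness of the weight exponent $a$ is needed to absorb ``weight-dependent error terms carrying a factor $a$'' is therefore mistaken: your own argument contains no spatial integration by parts and no such error terms --- the only weighted analytic input is Lemma~\ref{hardy}, valid for every $a\in(-1,1)$ --- so the proof goes through uniformly in the weight exponent. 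The range restriction is on the Gaussian parameter, not on $a$.
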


We use the following time-independent trace inequality. For its proof see  \cite[p. 65]{Ru}.

\begin{lemma}[Trace inequality]\label{tr}
Let   $f\in C_0^\infty(\overline{\mathbb R^{n+1}_+})$. There exists a constant $C_0 = C_0(n,a)>0$ such that for every $A>1$ one has
\[
\int_{\Rn} f(x,0)^2 dx \le C_0 \left(A^{1+a} \int_{\mathbb R^{n+1}_+} f(X)^2 x_{n+1}^a dX + A^{a-1} \int_{\mathbb  R^{n+1}_+} |\nabla f(X)|^2 x_{n+1}^a dX\right).
\]
\end{lemma}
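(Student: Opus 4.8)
The plan is to reduce the inequality to a one‑dimensional statement in the normal variable $x_{n+1}$, prove that by a weighted Cauchy--Schwarz estimate localized near $\{x_{n+1}=0\}$, and then integrate in the tangential variable $x\in\mathbb R^n$. Concretely, I would first fix $x\in\mathbb R^n$, set $g(\tau):=f(x,\tau)$ for $\tau=x_{n+1}\ge 0$, so that $g\in C_0^\infty([0,\infty))$, and aim at the one‑variable bound
\[
g(0)^2 \le C(a)\left(A^{1+a}\int_0^\infty g(\tau)^2\tau^a\,d\tau + A^{a-1}\int_0^\infty g'(\tau)^2\tau^a\,d\tau\right).
\]
(One may equivalently rescale $\tau=A^{-1}\sigma$ to remove $A$ and recognize this as the standard weighted $H^1((0,\infty),\sigma^a d\sigma)$ trace inequality, but the direct route below is cleanest.)

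To prove the displayed inequality, for $0<\tau\le 1/A$ I would write $g(0)-g(\tau)=-\int_0^\tau g'(s)\,ds$ and apply Cauchy--Schwarz against the weight $s^a$:
\[
|g(0)-g(\tau)| \le \Big(\int_0^\tau s^{-a}\,ds\Big)^{1/2}\Big(\int_0^\tau g'(s)^2 s^a\,ds\Big)^{1/2}
\le \frac{\tau^{(1-a)/2}}{\sqrt{1-a}}\Big(\int_0^{1/A} g'(s)^2 s^a\,ds\Big)^{1/2}.
\]
The one genuinely essential use of the hypothesis $a\in(-1,1)$ is here: $a<1$ makes $\int_0^\tau s^{-a}\,ds=\tau^{1-a}/(1-a)$ finite, which is exactly what allows the boundary value $g(0)$ to be controlled at all. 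Squaring and using $g(0)^2\le 2(g(0)-g(\tau))^2+2g(\tau)^2$ gives $g(0)^2\le 2g(\tau)^2+\tfrac{2\tau^{1-a}}{1-a}\int_0^{1/A}g'(s)^2 s^a\,ds$.

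Next I would multiply this pointwise inequality by $\tau^a$ and integrate over $\tau\in(0,1/A)$, using $\int_0^{1/A}\tau^a\,d\tau=\tfrac{A^{-1-a}}{1+a}$ (here $a>-1$ is used) and $\int_0^{1/A}\tau^{1-a}\tau^a\,d\tau=\tfrac{A^{-2}}{2}$; after multiplying through by $(1+a)A^{1+a}$ this yields
\[
g(0)^2 \le 2(1+a)A^{1+a}\int_0^{1/A} g^2\tau^a\,d\tau + \frac{1+a}{1-a}\,A^{a-1}\int_0^{1/A} g'(s)^2 s^a\,ds,
\]
i.e. the one‑dimensional inequality with $C(a)=\max\{2(1+a),(1+a)/(1-a)\}$ (extending both integrals to $(0,\infty)$). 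Finally, putting $\tau=x_{n+1}$, using $|\partial_{x_{n+1}}f(x,x_{n+1})|\le|\nabla f(x,x_{n+1})|$, and integrating in $x$ over $\mathbb R^n$ by Tonelli gives the asserted estimate with $C_0$ depending only on $a$ (hence only on $n$ and $a$).

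I do not expect a real obstacle; the argument is elementary once one identifies the mechanism. The only subtlety worth flagging is that for $a\ge 0$ one cannot instead average $g(0)^2$ against the $f^2$ bulk term near the boundary — the weight $x_{n+1}^a$ degenerates there — so the gradient contribution, integrated against $\tau^{1-a}\tau^a\,d\tau$, is what must absorb the near‑boundary error, and the convergence of $\int_0^\tau s^{-a}\,ds$ is precisely the structural feature of the range $a<1$ that makes this possible.
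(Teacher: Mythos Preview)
Your argument is correct. The one-dimensional reduction via the fundamental theorem of calculus plus weighted Cauchy--Schwarz is exactly the right mechanism, and your bookkeeping of the exponents is accurate: the convergence of $\int_0^\tau s^{-a}\,ds$ uses $a<1$, the averaging in $\tau$ uses $a>-1$, and the resulting powers $A^{1+a}$ and $A^{a-1}$ come out as claimed. Integrating in $x$ and replacing $|\partial_{x_{n+1}} f|$ by $|\nabla f|$ is harmless.

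As for comparison with the paper: the paper does not actually prove this lemma. It records the statement and cites \cite[p.~65]{Ru} for the proof. So there is no in-paper argument to compare against; your proof is a clean, self-contained substitute for that external reference. If anything, your write-up is more informative than a bare citation, since it makes transparent exactly where the constraint $a\in(-1,1)$ enters and why the scaling in $A$ takes the form it does.
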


\section{The main carleman estimate}\label{s:ce}
We now state and prove our main Carleman estimate which constitutes the generalization of the Carleman estimate  in \cite[Lemma 6]{EFV_2006}  to degenerate operators of the type \eqref{extop}.  In the case when $V \equiv 0$, such an estimate has been established in \cite{ BVS, LLR}. The key new  feature of the estimate in \eqref{care1}  is the quantitative  dependence of the weight parameter $\alpha$ on the $C^{1}$ norm of $V$. See \eqref{al} below.  In order to prove such an estimate,  we adapt  some ideas from the recent work \cite{AB1}. However as previously  mentioned in the introduction, such an adaptation  is  very delicate because in our present situation,  the zero order $C^{1}$ perturbation   is associated with the corresponding weighted Dirchlet to Neumann map as in \eqref{extop}. From now on, we will let

\begin{equation}\label{V1}
\|V\|_1= \|V\|_{C^1_{(x,t)}(Q_5)} +1.
\end{equation}
Furthermore, in the rest of the discussion, we will let 
\begin{equation}\label{a}
a=1-2s.
\end{equation}

\begin{theorem}\label{carl1}
Let $ \widetilde{\mc{H}}_s$ be the   backward in time extension operator  in \eqref{extop}. Let $ w \in C_0^\iy \left( \overline{\mb{B}^+_4} \times [0,\left. \f{1}{e\ld}\right) \right)$  such that $\py w\equiv Vw$ on $\{x_{n+1}=0\}$ where $ \ld = \frac{\alpha}{ \delta^2}$ for some  $ \delta \in (0,1).$ Further, assume 
\begin{equation}\label{al}
\alpha\geq M(1+\|V\|_{1}^{\f{1}{2s}}),
\end{equation} where $M$ is a large universal constant and $\delta$ sufficiently small. Then 
		\begin{align}\label{care1}
			& \alpha^2 \int_{\mb{R}^{n+1}_+ \times [c, \iy)} x_{n+1}^{a} \sigma_{s}^{-2 \alpha}(t) \ w^2 \ G  + \alpha \int_{\mb{R}^{n+1}_+ \times [c, \iy)} x_{n+1}^{a} \sigma_{s}^{1-2 \alpha}(t)\  |\nabla w|^2 \ G  \\
			& \leq M  \int_{\mb{R}^{n+1}_+ \times [c, \iy)} \sigma_{s}^{ 1-2 \alpha}(t) x_{n+1}^{-a} \ \lvert \widetilde{\mc{H}_{s}} w \rvert ^2 \ G\notag\\
   &+ \sigma_{s}^{-2 \alpha}(c) \left\{ -\f{c}{M} \int_{t=c} x_{n+1}^a \ |\nabla w(X,c)|^2 \ G(X,c) \ dX + M\alpha \int_{t=c} x_{n+1}^a \ |w(X,c)|^2 \ G(X,c) \ dX\right\}.\notag
		\end{align}
		Here $\sigma_{s}$ is as in Lemma \ref{sigma},  $G(X,t) = \f{1}{{t^\f{n+1+a}{2}}} e^{-\f{|X|^2}{4t}}$ and $0 < c \le \f{1}{5\ld}. $
	\end{theorem}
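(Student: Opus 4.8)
\textbf{Proof strategy for Theorem \ref{carl1}.}

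The plan is to follow the classical Carleman scheme for parabolic operators with a logarithmically modified Gaussian weight (as in \cite{EFV_2006} and its quantitative refinement \cite{AB1}), but to track carefully the boundary terms on $\{x_{n+1}=0\}$ that arise from integrating by parts the degenerate elliptic part, since these are precisely where the Neumann condition $\py w = Vw$ feeds the $C^1$ norm of $V$ into the estimate. First I would perform the standard conjugation: set $v = \sigma_s^{-\alpha} G^{1/2} w$ (or work directly with the weighted $L^2$ inner product with density $\sigma_s^{-2\alpha} G\, x_{n+1}^a$), so that $\widetilde{\mathcal H}_s w$ is transformed into an operator acting on $v$, which I then split into its formally symmetric part $\mathcal S$ and formally antisymmetric part $\mathcal A$ with respect to the weighted measure $x_{n+1}^a\, dX\, dt$. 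Expanding $\|\widetilde{\mathcal H}_s w\|^2$ in the weighted norm produces $\|\mathcal S v\|^2 + \|\mathcal A v\|^2 + \langle [\mathcal S, \mathcal A] v, v\rangle$ plus boundary contributions at $t=c$ and on the thin set; the commutator term is the positive term that will dominate the left-hand side of \eqref{care1}.

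The main quantitative work lies in computing the commutator $[\mathcal S,\mathcal A]$ and showing its associated quadratic form is bounded below by $c\,\alpha^2 \sigma_s^{-2\alpha} x_{n+1}^a w^2 G + c\,\alpha\, \sigma_s^{1-2\alpha} x_{n+1}^a |\nabla w|^2 G$ after absorbing error terms. Here the precise choice of the weight $\sigma_s$ from Lemma \ref{sigma} is essential: the defining ODE $\frac{d}{dt}\log(\sigma_s/(t\sigma_s')) = \theta_s(\lambda t)/t$ is engineered so that the "bad" terms in the commutator, which carry a factor like $\theta_s(\lambda t)/t$, can be controlled by the good $\alpha^2$-term provided $\lambda = \alpha/\delta^2$ with $\delta$ small — this is the mechanism producing the sharp power $\alpha \gtrsim \|V\|_1^{1/2s}$ rather than an exponential dependence. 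I would use properties (1)--(4) of Lemma \ref{sigma} repeatedly to bound all $\sigma_s$-derivative factors by universal constants times $\theta_s(\lambda t)/t$ and then exploit the integrability of $\theta_s(\lambda t)/t$ (equivalently smallness after the $\lambda$-scaling) to absorb them. The zeroth-order term $Vw$ enters through the thin-set boundary integral $\int_{\{x_{n+1}=0\}} (\partial_{x_{n+1}}^a w)\, (\text{weighted } w) = \int V w \cdot (\text{weighted } w)$; to control this one integrates by parts in $x$ and $t$ along the thin set, which is exactly where $\nabla_x V$ and $\partial_t V$ appear, and then applies the trace inequality Lemma \ref{tr} with parameter $A$ chosen as a suitable power of $\alpha$ to redistribute the trace term between the bulk $w^2$ and $|\nabla w|^2$ integrals; the Hardy-type inequality Lemma \ref{hardy} is used to absorb the resulting $|X|^2$-weighted terms back into the gradient term. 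The condition $\alpha \geq M(1 + \|V\|_1^{1/2s})$ is dictated precisely by requiring that, after the trace estimate, the $V$-contributions are a small fraction of the good $\alpha^2$ and $\alpha$ terms.

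The boundary terms at $t=c$ must be handled separately: since $w$ is compactly supported in $t$ away from $\infty$ but not necessarily at $t=c$, integrating by parts in $t$ leaves the surface integrals $\sigma_s^{-2\alpha}(c)\int_{t=c} x_{n+1}^a(\cdots) G(X,c)\,dX$ appearing on the right of \eqref{care1}; I would keep track of the signs carefully so that the $|\nabla w(\cdot,c)|^2$ term comes out with the favorable sign (it is subtracted, hence harmless) while the $\alpha |w(\cdot,c)|^2$ term is the genuine error one must pay. The restriction $0 < c \le \frac{1}{5\lambda}$ together with $\lambda t \le 1$ on the support ensures all the estimates from Lemma \ref{sigma} are valid throughout the region of integration, and the factor $1/(e\lambda)$ in the support hypothesis guarantees $\log(1/(\lambda t)) \ge 1$ so that $\theta_s(\lambda t)$ behaves monotonically as needed.

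\textbf{Expected main obstacle.} The hardest part will be the bookkeeping in the commutator computation combined with the thin-set boundary analysis: unlike the local case in \cite{AB1}, the weight $x_{n+1}^a$ is singular/degenerate at $x_{n+1}=0$, so every integration by parts in $x_{n+1}$ generates a thin-set term, and one must verify that the only surviving such term is the one carrying $\py w = Vw$ (all others canceling by the structure of $\widetilde{\mathcal H}_s$ or being controlled by Hardy's inequality). Getting the trace-inequality parameter $A$ and the smallness parameter $\delta$ calibrated simultaneously so that all error terms are absorbed with the stated threshold $\alpha \gtrsim \|V\|_1^{1/2s}$ — and not a worse power or an exponential — is the delicate quantitative heart of the argument.
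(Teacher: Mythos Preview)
Your strategy is essentially the paper's: the conjugation $w=\sigma_s^\alpha e^{|X|^2/8t}v$, the antisymmetric piece realized via the parabolic dilation field $\mathcal Z=2t\partial_t+X\cdot\nabla$ (the paper's $(P,Q)$ splitting with $P\sim\mathcal Z v$ is equivalent to your $\mathcal S/\mathcal A$ commutator), the cross-term computation producing positive bulk terms, and then trace inequality plus Hardy on the thin set to handle the $Vw$ boundary contribution.

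One quantitative point in your outline needs sharpening, and it is exactly the place where the exponent $1/2s$ is born. The good terms emerging from the cross-term computation carry the factor $\theta_s(\lambda t)/t$, \emph{not} yet $\alpha^2$ and $\alpha$; the paper then takes the trace parameter $A=A(t)$ to be \emph{time-dependent}, namely $A(t)\sim\big(\|V\|_1/\theta_s(\lambda t)\big)^{1/2s}$, so that after the trace inequality the $V$-contributions match the $\theta_s(\lambda t)/t$ profile of the good terms and are absorbed precisely under $\alpha^{2s}\gtrsim\|V\|_1$. A constant $A$ chosen as a power of $\alpha$ (as you wrote) would not close uniformly in $t$, since $\theta_s(\lambda t)$ can be arbitrarily small near $t=c$ when $c$ is small. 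Only \emph{after} this absorption does one invoke $\theta_s(\lambda t)/t\gtrsim\lambda\sim\alpha$ to reach the final $\alpha^2$ and $\alpha$ coefficients in \eqref{care1}. Keeping this order of operations straight is the whole content of the paper's Remark on the role of $\theta_s$.
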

	\begin{proof}
 		Let $\theta_{s}$ be as in Lemma \ref{sigma}.   For $t\in[0,\f{1}{e\ld})$, we first make the preliminary observation that
		\begin{align}\label{large}
			\f{\theta_{s}(\ld t)}{t} \ge \ld^{s} t^{s-1} \left( \log e \right)^{1+s} \gtrsim \lambda^s (e\lambda)^{1-s} \left( \log e \right)^{1+s} \gtrsim \ld .
		\end{align}
Moreover, as mentioned in the introduction, the choice of the function $\theta_{s}$ plays a pervasive role in the proof. The solid integrals below will be taken in $ \mb{R}^n \times [c, \iy) $ where $ 0 < c \le \f{1}{\ld} $ and we refrain from mentioning explicit limits in the rest of our discussion. Note that 
		\[ x_{n+1}^{-\f{a}{2}} \widetilde{\mc{H}}_s = x_{n+1}^{\f{a}{2}} \left( \dd_t + \tn{div}(\nabla) + \f{a}{x_{n + 1}} \dd_{n+1}\right).\]
		Define 
		\[ w(X,t) = \sigma_{s}^\alpha(t) e^{\f{|X|^2}{8t}} v(X,t). \]
		We then compute
		\begin{align}\label{sto1}
			\dd_{t} w = e^{\f{|X|^2}{8t}} \left( \sigma_{s}^\alpha(t) \dd_{t}v + \alpha \sigma_{s}^{\alpha - 1}(t) \sigma_{s}'(t) v - \f{|X|^2}{8t^2}\sigma_{s}^\alpha(t) v \right), \ \nabla w = e^{\f{|X|^2}{8t}} \sigma_{s}^\alpha(t) \left( \nabla v + \f{X}{4t} v \right).
		\end{align}
Therefore,
\begin{align*}
\tn{div} (\nabla w)& = \tn{div} \left( \sigma_{s}^\alpha(t) e^{\f{|X|^2}{8t}}  \left( \nabla v + \f{X}{4t} v \right) \right) \\ 
& = \sigma_{s}^\alpha(t) e^{\f{|X|^2}{8t}} \left[ \tn{div} (\nabla v) + \f{\langle X,  \nabla  v \rangle} {2t} + \left( \f{|X|^2}{16 t^2} + \f{n+1} {4t} \right) v \right].
\end{align*}
		Now we define the vector field 
		\begin{equation}\label{defz} \mc{Z} := 2t \dd_t + X \cdot \nabla. \end{equation}
		
		Note that $\mc{Z}$ is the infinitesimal generator of the parabolic dilations $\{\delta_r\}$ defined by $\delta_r(X,t)=(rX, r^2 t)$. In terms of $\mc{Z}$ we have

		\begin{align*}
			x_{n+1}^{-\f{a}{2}} \sigma_{s}^{-\alpha}(t) e^{-\f{|X|^2}{8t}} \widetilde{\mc{H}}_sw 
			& = x_{n+1}^{\f{a}{2}}  \left[ \tn{div}\left( \nabla v\right) + \f{1}{2t} \mc{Z}v + \left( \f{n+1 + a} {4t} + \f{\alpha \sigma_{s}'} {\sigma_{s}}\right) v - \f{|X|^2}{16t^2} v  + \f{a}{x_{n+1}} \dd_{n+1} v \right].
		\end{align*}

		Next we consider the expression
		\begin{align}\label{ex1}
			&  \int \sigma_{s}^{-2 \alpha}(t) t^{-\mu} x_{n+1}^{-a} e^{-\f{|X|^2}{4t}} \left(\f{t \sigma_{s}'}{\sigma_{s}}\right)^{-\f{1}{2}}  \lvert \widetilde{\mc{H}_s} w \rvert ^2\notag \\ 
			&= \int x_{n+1}^{a} t^{-\mu} \left(\f{t \sigma_{s}'}{\sigma_{s}}\right)^{-\f{1}{2}} \left[ \tn{div}\left( \nabla v\right) + \f{1}{2t} \mc{Z}v + \left( \f{n+1 + a} {4t} + \f{\alpha \sigma_{s}'} {\sigma_{s}}\right) v - \f{|X|^2}{16t^2} v  + \f{a}{x_{n+1}} \dd_{n+1} v \right]^2,
		\end{align}
		where $\mu$ will be appropriately chosen later. Then we estimate the integral \eqref{ex1} from below with an application of the algebraic inequality
		\[ \int P^2 + 2 \int PQ \le \int \left( P + Q \right)^2,\]
		where $P$ and $Q$ are chosen as
		\begin{align*}
			& P = \f{x_{n+1}^{\f{a}{2}} t^{-\f{\mu + 2}{2}}}{ 2} \left(\f{t \sigma_{s}'}{\sigma_{s}}\right)^{-\f{1}{4}} \mc{Z}v, \\ 
			& Q = x_{n+1}^{\f{a}{2}} t^{-\f{\mu}{2}} \left(\f{t \sigma_{s}'}{\sigma_{s}}\right)^{-\f{1}{4}}\left[ \tn{div}\left( \nabla v\right)  + \left( \f{n+1 + a} {4t} + \f{\alpha \sigma_{s}'} {\sigma_{s}}\right) v - \f{|X|^2}{16t^2} v  + \f{a}{x_{n+1}} \dd_{n+1} v \right].
		\end{align*} 
		We compute the terms coming from the cross product, i.e. from $\int PQ.$ We write $$ \int PQ: = \sum_{k=1}^4 \mc{I}_k,$$ where
		\begin{align*}
			&  \mc{I}_1 = \int x_{n+1}^{a} t^{-\mu} \left(\f{t \sigma_{s}'}{\sigma_{s}}\right)^{-\f{1}{2}} \f{1}{2t} \mc{Z}v \left( \f{n+1 + a} {4t} + \f{\alpha \sigma_{s}'} {\sigma_{s}} \right)v, \\
			&  \mc{I}_2 = \int x_{n+1}^a t^{-\mu} \left(\f{t \sigma_{s}'}{\sigma_{s}}\right)^{-\f{1}{2}} \f{\mc{Z}v} {2t} \ \tn{div} \left( \nabla v \right), \\
			& \mc{I}_3 = \int x_{n+1}^a t^{-\mu} \left(\f{t \sigma_{s}'}{\sigma_{s}}\right)^{-\f{1}{2}} \f{\mc{Z}v} {2t} \left(- \f{|X|^2}{16 t^2}\right) v, \\
			&  \mc{I}_4 = \int x_{n+1}^{a} t^{-\mu} \left(\f{t \sigma_{s}'}{\sigma_{s}}\right)^{-\f{1}{2}} \f{\mc{Z}v} {2t} \f{a\, \dd_{n+1}v}{x_{n+1}}.
		\end{align*}
		
\noindent{\textbf{Estimate for $\mc{I}_1:$}}
		\begin{align}
			\nn \mc{I}_1 & = \int x_{n+1}^{a} t^{-\mu} \left(\f{t \sigma_{s}'}{\sigma_{s}}\right)^{-\f{1}{2}} \f{1}{2t} \mc{Z}v \left( \f{n+1 + a} {4t} + \f{\alpha \sigma_{s}'} {\sigma_{s}} \right)v\label{e2}.
		\end{align}

We estimate the first term. By integrating by parts in $X$ and $t$ we have
\begin{align}
&  \f{n+1+a}{8} \int x_{n+1}^{a} t^{-\mu-2} \left(\f{t \sigma_{s}'}{\sigma_{s}}\right)^{-\f{1}{2}} \mc{Z}\left( \f{v^2}{2} \right)=\f{n+1+a}{8} \int x_{n+1}^{a} t^{-\mu-2} \left(\f{t \sigma_{s}'}{\sigma_{s}}\right)^{-\f{1}{2}}\bigg(t \partial_{t}(v^2)+\left \langle \frac{X}{2}, \nabla (v^2) \right \rangle\bigg) \\
   &=\f{n+1+a}{8} \int x_{n+1}^{a} t^{-\mu-2}(\mu+1) \left(\f{t \sigma_{s}'}{\sigma_{s}}\right)^{-\f{1}{2}} v^2+\f{(n+1+a)}{16} \int x_{n+1}^{a} t^{-\mu-1} \left(\f{t \sigma_{s}'}{\sigma_{s}}\right)^{-\f{3}{2}}\left(\f{t \sigma_{s}'}{\sigma_{s}}\right)' v^2 \notag  \\
			&  - \left( \f{n+1+a}{8} \right) \ c^{-\mu-1} \left(\f{c \sigma_{s}'(c)}{\sigma_{s}(c)}\right)^{-\f{1}{2}} \int_{t=c} x_{n+1}^{a} v^2(X,c) \ dX\notag\\
   &-\left( \f{n+1+a}{8} \right)\int \f{n+1+a}{2} x_{n+1}^{a} t^{-\mu-2} \left(\f{t \sigma_{s}'}{\sigma_{s}}\right)^{-\f{1}{2}} v^2, \label{ektaineq}
		\end{align}
where in the last line we used that $\operatorname{div}(Xx_{n+1}^a)=(n+1+a)x_{n+1}^a.$ If we  now let \begin{equation}\label{mu}
			\mu=\frac{n-1+a}{2}\end{equation} 
in \eqref{ektaineq}, then the first and fourth term on the right hand side cancel each other. Moreover, for this choice of $\mu,$ we find using integration by parts
\begin{align}\label{e40}
			& \f{\alpha}{2} \int x_{n+1}^{a} t^{-\mu-2} \left(\f{t \sigma_{s}'}{\sigma_{s}}\right)^{\f{1}{2}} \mc{Z}\left( \f{v^2}{2} \right) \\
			& =- \f{\alpha}{4} \int \tn{div}(x_{n+1}^a t^{-\f{n+3+a}{2}} \mc{Z})\left(\f{t \sigma_{s}'}{\sigma_{s}}\right)^{\f{1}{2}} v^2  -\f{\alpha}{4} \int x_{n+1}^{a} t^{-\mu-1} \left(\f{t \sigma_{s}'}{\sigma_{s}}\right)^{-\f{1}{2}} \left(\f{t \sigma_{s}'}{\sigma_{s}}\right)' v^2\notag\\&  - \f{\alpha}{2} c^{-\mu-1} \left(\f{c \sigma_{s}'(c)}{\sigma_{s}(c)}\right)^{-\f{1}{2}} \int_{t=c} x_{n+1}^{a} v^2(X,c) \ dX\notag \\
		&=-\f{\alpha}{4} \int x_{n+1}^{a} t^{-\mu-1} \left(\f{t \sigma_{s}'}{\sigma_{s}}\right)^{-\f{1}{2}} \left(\f{t \sigma_{s}'}{\sigma_{s}}\right)' v^2 - \f{\alpha}{2} c^{-\mu-1} \left(\f{c \sigma_{s}'(c)}{\sigma_{s}(c)}\right)^{-\f{1}{2}} \int_{t=c} x_{n+1}^{a} v^2(X,c) \ dX. \notag	
		\end{align}
		Here we used that $\tn{div}(x_{n+1}^a t^{-\f{n+3+a}{2}} \mc{Z})=0$. Therefore, for large enough $\alpha$ we obtain for some universal $N>1$
\begin{align}
 \mc{I}_{1}&:=\f{(n+1+a)}{16} \int x_{n+1}^{a} t^{-\mu-1} \left(\f{t \sigma_{s}'}{\sigma_{s}}\right)^{-\f{3}{2}}\left(\f{t \sigma_{s}'}{\sigma_{s}}\right)' v^2 - \left( \f{n+1+a}{8} \right) \ c^{-\mu-1} \left(\f{c \sigma_{s}'(c)}{\sigma_{s}(c)}\right)^{-\f{1}{2}} \int_{t=c} x_{n+1}^{a} v^2(X,c) \notag\\
&-\f{\alpha}{4} \int x_{n+1}^{a} t^{-\mu-1} \left(\f{t \sigma_{s}'}{\sigma_{s}}\right)^{-\f{1}{2}} \left(\f{t \sigma_{s}'}{\sigma_{s}}\right)' v^2  - \f{\alpha}{2} c^{-\mu-1} \left(\f{c \sigma_{s}'(c)}{\sigma_{s}(c)}\right)^{-\f{1}{2}} \int_{t=c} x_{n+1}^{a} v^2(X,c) \ dX\notag\\
&\geq \f{\alpha}{N} \int x_{n+1}^{a} t^{-\mu-1} \f{\theta_{s}(\lambda t)}{t} v^2- \alpha c^{-\mu-1} \left(\f{c \sigma_{s}'(c)}{\sigma_{s}(c)}\right)^{-\f{1}{2}}\int_{t=c} x_{n+1}^{a} v^2(X,c) \ dX. \label{mci1} \end{align} Notice that the fact $-\left(\f{t \sigma_{s}'}{\sigma_{s}}\right)'$ is a comparable to the quantity $\f{\theta_{s}(\lambda t)}{t}$ which follows from Lemma \ref{sigma} is being used in the last inequality. \\

\medskip
			
\noindent{\textbf{Estimate for $\mc{I}_2:$}} Now we consider the term $\mc{I}_2$ which  finally provides the positive  gradient terms in our Carleman estimate.  This is obtained via a Rellich type argument. We have		\begin{align}\label{k0}
\mc{I}_2 &= \int x_{n+1}^a t^{-\mu} \left(\f{t \sigma_{s}'}{\sigma_{s}}\right)^{-\f{1}{2}} \f{\mc{Z}v} {2t} \ \tn{div} \left( \nabla v \right)\notag\\
   &=\int x_{n+1}^a t^{-\mu} \left(\f{t \sigma_{s}'}{\sigma_{s}}\right)^{-\f{1}{2}} \left(\partial_{t} v+\f{X. \nabla v}{2t} \right) \ \tn{div} \left( \nabla v \right)=:\mc{I}_{21}+\mc{I}_{22}.
  \end{align}
We estimate them individually. Using divergence theorem, we have\begin{align}
\mc{I}_{21}&=-\int x_{n+1}^a t^{-\mu} \left(\f{t \sigma_{s}'}{\sigma_{s}}\right)^{-\f{1}{2}} v_{i} \dd_{t}(v_{i})-a \int x_{n+1}^{a-1} t^{-\mu} \left(\f{t \sigma_{s}'}{\sigma_{s}}\right)^{-\f{1}{2}} v_{n+1} \dd_t v\notag\\
&-\int_{\{x_{n+1}=0\}}t^{-\mu}\left(\f{t\sigma_{s}'}{\sigma_{s}}\right)^{-1/2}  V(x, t)v \partial_{t}v\ \text{(using $\py v= Vv$)}\notag \\
&=\f{1}{2}\int x_{n+1}^{a} (-\mu)t^{-\mu-1} \left(\f{t \sigma_{s}'}{\sigma_{s}}\right)^{-\f{1}{2}}|\nabla v|^2 \notag-\f{1}{4}\int x_{n+1}^{a} t^{-\mu} \left(\f{t \sigma_{s}'}{\sigma_{s}}\right)^{-\f{3}{2}}\left(\f{t \sigma_{s}'}{\sigma_{s}}\right)^{'} |\nabla v|^{2}\notag\\
&+\f{1}{2}\int_{\{t=c\}} x_{n+1}^{a} c^{-\mu} \left(\f{c \sigma_{s}'}{\sigma_{s}}\right)^{-\f{1}{2}}|\nabla v(X, c)|^{2}\notag\\
&\underbrace{-a\int x_{n+1}^{a} t^{-\mu} \left(\f{t \sigma_{s}'}{\sigma_{s}}\right)^{-\f{1}{2}} \f{\mc{Z}v} {2t} \f{ \dd_{n+1}v}{x_{n+1}}}_{-\mc{I}_{4}}+\f{a}{2}\int x_{n+1}^{a-1} t^{-\mu-1} \left(\f{t \sigma_{s}'}{\sigma_{s}}\right)^{-\f{1}{2}} (X, \nabla v)  \dd_{n+1}v\notag\\
& -\int_{\{x_{n+1}=0\}}t^{-\mu}\left(\f{t\sigma_{s}'}{\sigma_{s}}\right)^{-1/2}  V(x, t)v \partial_{t}v.\label{mci21}
\end{align}
We also have
\begin{align}
\mc{I}_{22}&=- \f{1}{2} \int t^{-\mu-1} \left(\f{t \sigma_{s}'}{\sigma_{s}}\right)^{-\f{1}{2}} \langle\nabla\left( x_{n+1}^a \langle X, \nabla v \rangle \right), \nabla(v) \rangle\notag\\
&-\f{1}{2} \int_{\{x_{n+1}=0\}} t^{-\mu-1} \left(\f{t \sigma_{s}'}{\sigma_{s}}\right)^{-\f{1}{2}}V(x, t)v \langle x, \nabla_{x}v\rangle\notag \\
& =-\f{a}{2}\int x_{n+1}^{a-1} t^{-\mu-1} \left(\f{t \sigma_{s}'}{\sigma_{s}}\right)^{-\f{1}{2}} (X, \nabla v)  \dd_{n+1}v\notag\\
&-\f{1}{2}\int t^{-\mu-1}\left(\f{t \sigma_{s}'}{\sigma_{s}}\right)^{-\f{1}{2}} x_{n+1}^{a}(X_{i}v_{ip}+v_{p})v_{p}\notag-\f{1}{2} \int_{\{x_{n+1}=0\}} t^{-\mu-1}\left(\f{t \sigma_{s}'}{\sigma_{s}}\right)^{-\f{1}{2}} Vv \langle x, \nabla_{x}v\rangle\notag\\
& =-\f{a}{2}\int x_{n+1}^{a-1} t^{-\mu-1} \left(\f{t \sigma_{s}'}{\sigma_{s}}\right)^{-\f{1}{2}} (X, \nabla v)  \dd_{n+1}v-\f{1}{2}\int t^{-\mu-1}\left(\f{t \sigma_{s}'}{\sigma_{s}}\right)^{-\f{1}{2}} x_{n+1}^{a}|\nabla v|^2\notag\\
&-\f{1}{4}\int x_{n+1}^{a} t^{-\mu-1}\left(\f{t \sigma_{s}'}{\sigma_{s}}\right)^{-\f{1}{2}}(X, \nabla(|\nabla v|^2))- \f{1}{2} \int_{\{x_{n+1}=0\}} t^{-\mu-1}\left(\f{t \sigma_{s}'}{\sigma_{s}}\right)^{-\f{1}{2}} V(x, t)v \langle x, \nabla_{x}v\rangle.\notag\end{align}
Now by integrating by parts the following term 

\[
-\f{1}{4}\int x_{n+1}^{a} t^{-\mu-1}\left(\f{t \sigma_{s}'}{\sigma_{s}}\right)^{-\f{1}{2}}(X, \nabla(|\nabla v|^2))
\]
in the above expression we obtain
\begin{align}
&\mc{I}_{22}=-\f{a}{2}\int x_{n+1}^{a-1} t^{-\mu-1} \left(\f{t \sigma_{s}'}{\sigma_{s}}\right)^{-\f{1}{2}} (X, \nabla v)  \dd_{n+1}v+\f{\mu}{2}\int t^{-\mu-1}\left(\f{t \sigma_{s}'}{\sigma_{s}}\right)^{-\f{1}{2}} x_{n+1}^{a}|\nabla v|^2\notag\\
&- \f{1}{2} \int_{\{x_{n+1}=0\}} t^{-\mu-1}\left(\f{t \sigma_{s}'}{\sigma_{s}}\right)^{-\f{1}{2}} V(x, t)\,v \langle x, \nabla_{x}v\rangle.\label{mci22}
\end{align}
Combining \eqref{k0}, \eqref{mci21}, and \eqref{mci22} with $\mc{I}_4$ we have 		
\begin{align}
\mc{I}_2 + \mc{I}_4 &=  -\f{1}{4}\int x_{n+1}^{a} t^{-\mu} \left(\f{t \sigma_{s}'}{\sigma_{s}}\right)^{-\f{3}{2}}\left(\f{t \sigma_{s}'}{\sigma_{s}}\right)^{'} |\nabla v|^{2}\notag+\f{1}{2}\int_{\{t=c\}} x_{n+1}^{a} c^{-\mu} \left(\f{c \sigma_{s}'}{\sigma_{s}}\right)^{-\f{1}{2}}|\nabla v(X, c)|^{2}\notag\\
&-\f{1}{2} \int_{\{x_{n+1}=0\}} t^{-\mu-1}\left(\f{t \sigma_{s}'}{\sigma_{s}}\right)^{-\f{1}{2}} {V}(x, t)\, v\, \langle x, \nabla_{x}v\rangle\notag\\
&-\int_{\{x_{n+1}=0\}}t^{-\mu}\left(\f{t\sigma_{s}'}{\sigma_{s}}\right)^{-1/2}  V(x, t) \partial_{t}\left(\f{v^2}{2}\right).\label{notreq}
\end{align}
Recall that
\begin{equation}\label{uv} \nabla v = \sigma_{s}^{-\alpha}(t) e^{-\f{|X|^2}{8t}} \left( \nabla w - \f{X}{4t} w \right). \end{equation}
Let us now consider the term  $- \f{1}{4} \int x_{n+1}^a t^{-\mu} \left(\f{t \sigma_{s}'}{\sigma_{s}}\right)^{-\f{3}{2}} \left( \f{t \sigma_{s}'}{\sigma_{s}} \right)' |\nabla v|^2$. 	Using \eqref{uv} we obtain
\begin{align}\label{i1}
& - \f{1}{4} \int x_{n+1}^a t^{-\mu} \left(\f{t \sigma_{s}'}{\sigma_{s}}\right)^{-\f{3}{2}} \left( \f{t \sigma_{s}'}{\sigma_{s}} \right)' \langle \nabla v,  \nabla v \rangle \\
&=-\f{1}{4} \int x_{n+1}^a t^{-\mu} \left(\f{t \sigma_{s}'}{\sigma_{s}}\right)^{-\f{3}{2}} \left( \f{t \sigma_{s}'}{\sigma_{s}} \right)' \sigma_{s}^{-2\alpha}(t) \left\langle \nabla w - \f{X}{4t} w, \left( \nabla w - \f{X}{4t} w \right) \right\rangle e^{-\f{|X|^2}{4t}}\notag \\
&=-\f{1}{4} \int x_{n+1}^a t^{-\mu} \left(\f{t \sigma_{s}'}{\sigma_{s}}\right)^{-\f{3}{2}} \left( \f{t \sigma_{s}'}{\sigma_{s}} \right)' \sigma_{s}^{-2\alpha}(t) \left( \langle \nabla w, \nabla w \rangle + \f{|X|^2}{16 t^2} w^2 - \f{1}{4t} \ \langle X \cdot \nabla (w^2) \rangle \right)  \ e^{-\f{|X|^2}{4t}}\notag  \\
& =-\f{1}{4} \int x_{n+1}^a t^{-\mu} \left(\f{t \sigma_{s}'}{\sigma_{s}}\right)^{-\f{3}{2}} \left( \f{t \sigma_{s}'}{\sigma_{s}} \right)' \sigma_{s}^{-2\alpha}(t) \left( \langle \nabla w, \nabla w \rangle - \f{|X|^2}{16t^2} w^2 \right) \ e^{-\f{|X|^2}{4t}}\notag \\
& - \f{1}{16} \int  t^{-\mu-1} \left(\f{t \sigma_{s}'}{\sigma_{s}}\right)^{-\f{3}{2}} \left(\f{t \sigma_{s}'}{\sigma_{s}} \right)' \tn{div}\left(\  x_{n+1}^a X \right) w^2e^{-\f{|X|^2}{4t}}\notag \\
& = - \f{1}{4} \int x_{n+1}^a t^{-\mu} \left(\f{t \sigma_{s}'}{\sigma_{s}}\right)^{-\f{3}{2}} \left( \f{t \sigma_{s}'}{\sigma_{s}} \right)' \sigma_{s}^{-2\alpha}(t) \left( |\nabla w|^2 - \frac{|X|^2}{16t^2} \right) \ e^{-\f{|X|^2}{4t}} \notag\\
&- \f{n+1+a}{16} \int x_{n+1}^a t^{-\mu-1} \left(\f{t \sigma_{s}'}{\sigma_{s}}\right)^{-\f{3}{2}} \left(\f{t \sigma_{s}'}{\sigma_{s}} \right)' \sigma_{s}^{-2\alpha}(t) w^2 e^{-\f{|X|^2}{4t}}.\notag 
\end{align}
The boundary integral in \eqref{notreq}  above,  i.e.  the term \[ \f{1}{2} c^{-\mu} \left(\f{c \sigma_{s}'(c)}{\sigma_{s}(c)}\right)^{-\f{1}{2}} \int_{t=c} x_{n+1}^{a} \langle \nabla v, \nabla v \rangle (X,c) \] can be computed in a similar fashion to obtain the following
\begin{align*}
& \f{1}{2} c^{-\mu} \left(\f{c \sigma_{s}'(c)}{\sigma_{s}(c)}\right)^{-\f{1}{2}} \int_{t=c} x_{n+1}^{a} \langle \nabla v, \nabla v \rangle (X,c) \ dX \\
			& = \f{1}{2} c^{-\mu} \sigma_{s}^{-2\alpha}(c) \left(\f{c \sigma_{s}'(c)}{\sigma_{s}(c)}\right)^{-\f{1}{2}} \int_{t=c} x_{n+1}^{a} \left( \langle \nabla w,  \nabla w \rangle - \f{|X|^2}{16c^2} w^2 + \f{n+1+a}{4c} w^2 \right) \ e^{-\f{|X|^2}{4c}} \ dX.
		\end{align*} 
\medskip

\noindent{\textbf{Estimate for $\mc I_3$:}} Let us now compute $\mc{I}_3$. We have
		\begin{align}\label{I_3}
			\mc{I}_3  & =-\f{1}{16} \int x_{n+1}^a t^{-\mu-2} \left(\f{t \sigma_{s}'}{\sigma_{s}}\right)^{-\f{1}{2}} \f{\mc{Z}v} {2t}  |X|^2 v \\
			& = -\f{1}{32} \int x_{n+1}^a t^{-\mu-2} \left(\f{t \sigma_{s}'}{\sigma_{s}}\right)^{-\f{1}{2}} |X|^2 \ \dd_t(v^2)\notag \\
			&  -\f{1}{64} \int x_{n+1}^a t^{-\mu-3} \left(\f{t \sigma_{s}'}{\sigma_{s}}\right)^{-\f{1}{2}} |X|^2 \ \langle X,  \nabla (v^2) \rangle\notag \\
			& = -\f{n+3+a}{64} \int x_{n+1}^a t^{-\mu-3} \left(\f{t \sigma_{s}'}{\sigma_{s}}\right)^{-\f{1}{2}} |X|^2 \ v^2\ \text{(using $\mu=\frac{n-1+a}{2}$)} \notag \\
			& -\f{1}{64} \int x_{n+1}^a t^{-\mu-2} \left(\f{t \sigma_{s}'}{\sigma_{s}}\right)^{-\f{3}{2}} \left(\f{t \sigma_{s}'}{\sigma_{s}}\right)' |X|^2 v^2 +\f{1}{32} c^{-\mu-2} \left( \f{c \sigma_{s}'(c)}{\sigma_{s}(c)} \right)^{-\f{1}{2}} \int_{t=c} x_{n+1}^a |X|^2 v^2\notag \\
			&  + \f{1}{64} \int t^{-\mu-3} \left(\f{t \sigma_{s}'}{\sigma_{s}}\right)^{-\f{1}{2}} |X|^2  (n+1+a) x_{n+1}^a v^2\notag  \\
			& + \f{1}{32} \int x_{n+1}^a t^{-\mu-3} \left(\f{t \sigma_{s}'}{\sigma_{s}}\right)^{-\f{1}{2}} |X|^2 v^2\notag \\
			& = -\f{1}{64} \int x_{n+1}^a t^{-\mu-2} \left(\f{t \sigma_{s}'}{\sigma_{s}}\right)^{-\f{3}{2}} \left(\f{t \sigma_{s}'}{\sigma_{s}}\right)' |X|^2 \sigma_{s}^{-2\alpha}(t)  w^2 e^{-\f{|X|^2}{4t}}\notag \\
   &+\f{1}{32} c^{-\mu-2} \left( \f{c \sigma_{s}'(c)}{\sigma_{s}(c)} \right)^{-\f{1}{2}} \int_{t=c} x_{n+1}^a |X|^2 \sigma_{s}^{-2\alpha}(t)  w^2 e^{-\f{|X|^2}{4t}} \label{mci3}.
		\end{align}	
Now we use the fact that  $- \left(\f{t\sigma_{s}'}{\sigma_{s}}\right)' \sim \f{\theta_{s}(\ld t)}{t}$ since the term $\f{t\sigma_{s}'}{\sigma_{s}}$ is positively bounded from both sides in view of Lemma \ref{sigma} and combining the above estimates we get for  a new universal $N$ that the following estimate holds
\begin{align}
\notag &\mc{I}_{1}+\mc{I}_{2}+ \mc{I}_{3}+ \mc{I}_{4} \\
&\geq \f{\alpha}{N} \int x_{n+1}^{a} \sigma_{s}^{-2\alpha}(t)  \f{\theta_{s}(\lambda t)}{t}\, G\, w^2+ \frac{1}{N}  \int x_{n+1}^a \f{\theta_{s}(\lambda t)}{t} \sigma_{s}^{1-2\alpha}(t)\,\, G\,\, |\nabla w|^2\notag\\
&-N\alpha \sigma_{s}^{-2\alpha}(c) \int_{t=c} x_{n+1}^{a} w^2(X,c)\,G(X, c)+\f{c}{N}\, \sigma_{s}^{-2\alpha}(c) \int_{t=c} x_{n+1}^{a} |\nabla w|^2 G \ dX\notag\\
&-\f{1}{2} \int_{\{x_{n+1}=0\}} t^{-\mu-1}\left(\f{t \sigma_{s}'}{\sigma_{s}}\right)^{-\f{1}{2}} {V}(x, t)\, v\, \langle x, \nabla_{x}v\rangle-\int_{\{x_{n+1}=0\}}t^{-\mu}\left(\f{t\sigma_{s}'}{\sigma_{s}}\right)^{-1/2}  V(x, t) \partial_{t}\left(\f{v^2}{2}\right).\label{combined-est}
\end{align}
At this point the proof depends on the precise estimates for the functions $\sigma_s$ and $\theta_{s}.$ \noindent Let us estimate the spatial boundary term in \eqref{combined-est}. Using the divergence formula we obtain the following alternate representation of such boundary terms.  

 \begin{align*}
 &K_{1}:=\f{1}{4} \int_{\{x_{n+1}=0\}} t^{-\mu-1}\left(\f{t \sigma_{s}'}{\sigma_{s}}\right)^{-\f{1}{2}} (V(x, t) n+\langle x, \nabla_{x} V(x, t)\rangle ) v^2,\\
 &K_{2}:=-\int_{\{x_{n+1}=0\}}t^{-\mu}\left(\f{t\sigma_{s}'}{\sigma_{s}}\right)^{-1/2}  V(x, t) \partial_{t}\left(\f{v^2}{2}\right).
 \end{align*}
 Using the trace inequality Lemma \ref{tr} we have 
\begin{align}
\notag|K_{1}| &\leq \int t^{-\mu-1} \sigma_{s}^{-2\alpha}\int_{\Rb^n}\left(nV(x, t)+\langle x, \nabla_{x}V(x, t) \R \right)e^{-\frac{|x|^2}{4t}} w^2\notag\\
 &\lesssim \|V\|_{1} \int t^{-\mu-1} \sigma_{s}^{-2\alpha} \int_{\Rb^n} e^{-\frac{|x|^2}{4t}} w^2 \label{k1mid}\\
\notag & \lesssim \|V\|_{1} \int t^{-\mu-1} \sigma_{s}^{-2\alpha} \left(A(t)^{1+a}\int_{\Rb^{n+1}_{+}} x_{n+1}^a e^{-\frac{|X|^2}{4t}} w^2+A(t)^{a-1}\int_{\Rb^{n+1}_{+}} x_{n+1}^a\big|\nabla w-w \f{X}{4t}\big|^2 e^{-\frac{|X|^2}{4t}} \right)\\
\notag & \lesssim \|V\|_{1} \int t^{-\mu-1} \sigma_{s}^{-2\alpha} \left(A(t)^{1+a}\int_{\Rb^{n+1}_{+}} x_{n+1}^a e^{-\frac{|X|^2}{4t}} w^2+A(t)^{a-1}\int_{\Rb^{n+1}_{+}} x_{n+1}^a |\nabla w|^2 e^{-\frac{|X|^2}{4t}}\right.\\
&\left.+A(t)^{a-1}\int_{\Rb^{n+1}_{+}} x_{n+1}^{a} w^2 \f{|X|^2}{16t^2} e^{-\frac{|X|^2}{4t}} \right)\label{trace}
\end{align}
for $A(t)>1$. We  now have to choose $A(t)$ judiciously to complete our proof. Now using Lemma \ref{hardy} we have\begin{align}
\label{Hardy}
\int_{\Rb^{n+1}_{+}} x_{n+1}^{a} w^2 \f{|X|^2}{16t^2} e^{-\frac{|X|^2}{4t}}\leq \int_{\Rb^{n+1}_{+}} x_{n+1}^a \f{n+1+a}{4t} e^{-\frac{|X|^2}{4t}} w^2+\int_{\Rb^{n+1}_{+}} x_{n+1}^a e^{-\frac{|X|^2}{4t}} |\nabla w|^2.
\end{align}
Plugging the estimate \eqref{Hardy} in \eqref{trace} yields
\begin{align}
\notag |K_1|&\leq \int t^{-\mu-1} \sigma_{s}^{-2\alpha}\int_{\Rb^n}\left(nV(x, t)+\langle x, \nabla_{x}V(x, t) \R \right)e^{-\frac{|x|^2}{4t}} w^2\\
 &\lesssim \notag   \|V\|_{1} \int t^{-\mu-1} \sigma_{s}^{-2\alpha} \left(A(t)^{1+a}\int_{\Rb^{n+1}_{+}} x_{n+1}^a e^{-\frac{|X|^2}{4t}} w^2+2A(t)^{a-1}\int_{\Rb^{n+1}_{+}} x_{n+1}^a |\nabla w|^2 e^{-\frac{|X|^2}{4t}}\right.\\
&\left.+A(t)^{a-1}\int_{\Rb^{n+1}_{+}} x_{n+1}^a \f{n+1+a}{4t} e^{-\frac{|X|^2}{4t}} w^2 \right)\notag\\
\notag & \lesssim \|V\|_{1}\left( \int A(t)^{1+a}  \sigma_{s}^{-2\alpha}\,  x_{n+1}^a G w^2+2\int A(t)^{a-1}x_{n+1}^a \sigma_{s}^{-2\alpha} |\nabla w|^2 G\right.\\
&\left.+\int A(t)^{a-1}\sigma_{s}^{-2\alpha-1} x_{n+1}^a  G\, w^2 \right).\label{k1est}
\end{align}
In the last inequality in \eqref{k1est} above, we used that $\sigma_s(t) \sim t$. 
Now we choose $A(t)>1$ in such a way that the above terms can be absorbed in the positive terms on the right hand side in \eqref{combined-est} above, i.e. in the terms  $\frac{\alpha}{N} \int x_{n+1}^{a} \sigma_{s}^{-2\alpha}(t) \f{\theta_{s}(\ld t)}{t} \ w^2  G$ and $\frac{1}{N} \int x_{n+1}^a \frac{\theta_{s}(\lambda t)}{t}  \sigma_{s}^{1-2\alpha}(t) |\nabla w|^2 G$. Therefore we require
\begin{equation}\begin{cases} A(t)^{1+a}\|V\|_{1}\lesssim  \f{\alpha}{10N} \frac{\theta_{s}(\lambda t)}{t},\\ A(t)^{a-1}\|V\|_{1}\lesssim \f{1}{10N}  \theta_{s}(\lambda t), \\ \frac{A(t)^{a-1}}{t} \|V\|_{1} \lesssim  \f{\alpha}{10N} \frac{\theta_{s}(\lambda t)}{t}.\end{cases}
\label{cases}
\end{equation}
It is easy to see that the third inequality automatically holds if the second one is satisfied since $\alpha$ is to be chosen large. Therefore, it is sufficient to choose $A(t)$ satisfying the first two inequalities. Recall that $a=1-2s,$ and if we set 
$$A(t)=\left(\f{10N \|V\|_{1}}{\theta_{s}(\lambda t)}\right)^{1/2s},$$ then the second inequality in \eqref{cases} is valid. Note that $A(t)>1$ as $\theta_{s}(t)\to 0$ as $t\to 0.$ Moreover, the above choice of $A$ will also satisfy the first inequality in \eqref{cases} if we choose large $\alpha$ such that
\begin{align}
\notag &\left(\f{10N \|V\|_{1}}{\theta_{s}(\lambda t)}\right)^{\f{2(1-s)}{2s}}\|V\|_{1}\leq \f{\alpha}{10N} \frac{\theta_{s}(\lambda t)}{t}\\
\notag\iff& \left(\f{10N \|V\|_{1}}{\theta_{s}(\lambda t)}\right)^{\f{1}{s}}\f{\theta_{s}(\lambda t)}
{10N \|V\|_{1}}\|V\|_{1}\leq \f{\alpha}{10N} \frac{\theta_{s}(\lambda t)}{t}\\
\label{qu}\iff & 10N \|V\|_{1}\leq \alpha^s t^{-s} \theta_{s}(\lambda t).
\end{align}
Finally, observe that $\theta_{s}(\lambda t)=(\lambda t)^s\left(\log \f{1}{\lambda t} \right)^{1+s}\geq (\lambda t)^s$ since $\log \f{1}{\lambda t}\geq 1$ on $[0, \f{1}{e\lambda}],$ so the inequality \eqref{qu} is ensured if we choose large $\alpha$ such that $$ \alpha^s t^{-s} (\lambda t)^s\geq 10N \|V\|_{1}.$$
Consequently since $\lambda=\alpha \delta^2,$ by choosing some arbitrary $\delta\in (0, 1),$ we conclude that the choice of $A(t)$ above satisfy the set of inequalities in \eqref{cases} provided $$\alpha^{2s}\geq (1+ N)\|V\|_{1},$$ possibly for a new universal constant $N$ which we again denote as $N.$ Before proceeding further, we make the following discursive remark. 

\begin{remark}\label{choi}
The reader should notice the critical role played by the choice of the function $\theta_s$ in obtaining the threshold $\alpha^{2s}\geq (1+ N)\|V\|_{1},$ which is a key ingredient in obtaining the desired vanishing order estimate.\end{remark}

For $K_2,$ applying integration by parts we observe
\begin{align}
|K_2|&=\bigg|\f{1}{2}\int_{\{x_{n+1}=0\}}(-\mu)t^{-\mu-1}\left(\f{t\sigma_{s}'}{\sigma_{s}}\right)^{-1/2}V(x, t) v^2+\f{1}{2}\int_{\{x_{n+1}=0\}} t^{-\mu}\f{-1}{2}\left(\f{t\sigma_{s}'}{\sigma_{s}}\right)^{-3/2} \left(\f{t\sigma_{s}'}{\sigma_{s}}\right)'  V(x, t) v^2 \notag\\
&+\f{1}{2}\int_{\{x_{n+1}=0\}} t^{-\mu}\left(\f{t\sigma_{s}'}{\sigma_{s}}\right)^{-1/2} V_{t} v^2+\f{1}{2}\int_{\{x_{n+1}=0; t=c\}} c^{-\mu}\left(\f{c\sigma_{s}'(c)}{\sigma_{s}(c)}\right)^{-1/2} V(x, c) v^2(x, c)\notag \bigg|.
\end{align}
The first and third terms on the right hand side of the above expression are bounded by $$C \|V\|_{1}\int_{\{x_{n+1}=0\}} t^{-\mu-1} \sigma_{s}^{-2\alpha} e^{-|x|^2/4t} w^2$$ since $\left(\f{t\sigma_{s}'}{\sigma_{s}}\right)\sim 1$ and $0\leq t<\f{1}{e\lambda}.$  
The second term is dominated by $\|V\|_{1} \int_{\{x_{n+1}=0\}} t^{-\mu} \bigg|-\left(\f{t\sigma_{s}'}{\sigma_{s}}\right)'\bigg| v^2,$ which in turn is bounded by  
$$C \|V\|_{1}\int_{\{x_{n+1}=0\}} t^{-\mu-1} \sigma_{s}^{-2\alpha} e^{-|x|^2/4t} w^2,$$
considering the fact that $-\left(\f{t\sigma_{s}'}{\sigma_{s}}\right)'$ is comparable to $\f{\theta_{s}(\lambda t)}{t}$ and $\theta_{s}(\lambda t)\to 0$ as $t\to 0.$ Combining the above arguments we have
\begin{align}
|K_2|&\lesssim \|V\|_{1}\int_{\{x_{n+1}=0\}} t^{-\mu-1} \sigma_{s}^{-2\alpha} e^{-|x|^2/4t} w^2+\bigg|\f{1}{2}\int_{\{x_{n+1}=0\}} c^{-\mu}\left(\f{c\sigma_{s}'(c)}{\sigma_{s}(c)}\right)^{-1/2} V(x, c) v^2(x, c) \bigg|.\label{bdry} 
\end{align}
The first term in \eqref{bdry} can be handled similarly as $K_1,$ see \eqref{k1mid} and \eqref{k1est}. For the last term in \eqref{bdry}, using trace inequality and performing similar calculations as in \eqref{k1est}, we obtain that
\begin{align}
\notag
 &\bigg|\f{1}{2}\int_{\{x_{n+1}=0; t=c\}} c^{-\mu}\left(\f{c\sigma_{s}'(c)}{\sigma_{s}(c)}\right)^{-1/2} V(x, c) v^2(x, c) \bigg|\\
\notag &\lesssim c \|V\|_{1}\int_{\{x_{n+1}=0\}} c^{-\mu-1} \sigma_{s}^{-2\alpha}(c)\,e^{-\f{|x|^2}{4c}} w^2(x, c)\\
&\lesssim \notag   \|V\|_{1}\left(c \sigma_{s}^{-2\alpha}(c) A^{1+a}\int  \,  x_{n+1}^a G(X, c) w^2(X, c)+2c A^{a-1}\sigma_{s}^{-2\alpha}(c)\int x_{n+1}^a  |\nabla w(X, c)|^2 G(X, c)\right.\\
&\left.+A^{a-1}\,c \f{n+1+a}{4c}\int \sigma_{s}^{-2\alpha}(c) x_{n+1}^a  G(X, c)\, w^2(X, c) \right)\label{name}
\end{align}
holds for any $A>1.$ If we now choose $A$ sufficiently large, say \begin{equation}\label{newa}A^{2s}\sim 100N\|V\|_{1},\end{equation}  then the term $$2c A^{a-1}\|V\|_{1}\sigma_{s}^{-2\alpha}(c)\int x_{n+1}^a  |\nabla w(X, c)|^2 G(X, c)$$ in \eqref{name} can easily be absorbed by the term $\f{c}{N} \, \sigma_{s}^{-2\alpha}(c) \int_{t=c} x_{n+1}^{a} |\nabla w|^2 G \ dX$ in \eqref{combined-est}. Corresponding to this choice of $A$ as in \eqref{newa}, we find by also  using that $c \lesssim \frac{1}{\alpha}$, the remaining terms in the last expression in \eqref{name} above can be estimated as
\begin{align*}
&\|V\|_1 \left( c \sigma_{s}^{-2\alpha}(c) A^{1+a}\int  \,  x_{n+1}^a G(X, c) w^2(X, c)+A^{a-1}\,c \f{n+1+a}{4c}\int \sigma_{s}^{-2\alpha}(c) x_{n+1}^a  G(X, c)\, w^2(X, c) \right) 
\\
&\leq N\alpha \sigma_{s}^{-2\alpha}(c) \int_{t=c} x_{n+1}^{a} w^2(X,c)\,G(X, c).\end{align*}

 Therefore, from the above discussion, the contributions from $K_1$ and $K_2$ can be absorbed appropriately by the first four terms in \eqref{combined-est} so that for large $\alpha$ satisfying $\alpha^{2s}\geq N(\|V\|_{1}+1)$ the following holds
\begin{align}
\label{finalest}
&\int \sigma_{s}^{-2 \alpha}(t) t^{-\mu} x_{n+1}^{-a} e^{-\f{|X|^2}{4t}} \left(\f{t \sigma_{s}'}{\sigma_{s}}\right)^{-\f{1}{2}}  \lvert \widetilde{\mc{H}_s} w \rvert ^2\\
&\geq \mc{I}_{1}+\mc{I}_{2}+ \mc{I}_{3}+ \mc{I}_{4}\notag\\
&\geq\f{\alpha}{N} \int x_{n+1}^{a} \sigma_{s}^{-2\alpha}(t)  \f{\theta_{s}(\lambda t)}{t}\, G\, w^2+ \frac{1}{N}  \int x_{n+1}^a \f{\theta_{s}(\lambda t)}{t} \sigma_{s}^{1-2\alpha}(t)\,\, G\,\, |\nabla w|^2\notag\\
&-N\alpha \sigma_{s}^{-2\alpha}(c) \int_{t=c} x_{n+1}^{a} w^2(X,c)\,G(X, c)+\f{c}{N}\, \sigma_{s}^{-2\alpha}(c) \int_{t=c} x_{n+1}^{a} |\nabla w|^2 G \ dX.\notag
\end{align}
Also, \eqref{large} implies that $\f{\theta_{s}(\lambda t)}{t}\gtrsim \lambda=\f{\alpha}{\delta^2},$ hence 
\begin{align}
\notag& N\int \sigma_{s}^{-2 \alpha}(t) t^{-\mu} x_{n+1}^{-a} e^{-\f{|X|^2}{4t}} \left(\f{t \sigma_{s}'}{\sigma_{s}}\right)^{-\f{1}{2}}  \lvert \widetilde{\mc{H}_s} w \rvert ^2\\
&\geq \alpha^2 \int x_{n+1}^{a} \sigma_{s}^{-2\alpha}(t) \ w^2  G +\alpha \int x_{n+1}^a   \sigma_{s}^{1-2\alpha}(t) |\nabla w|^2 G\notag\\
 &-N\alpha \sigma_{s}^{-2\alpha}(c) \int_{t=c} x_{n+1}^{a} w^2(X,c)\,G(X, c)+\f{c}{N}\, \sigma_{s}^{-2\alpha}(c) \int_{t=c} x_{n+1}^{a} |\nabla w|^2 G \ dX\label{finalest-1}
\end{align}
possibly for a new universal constant $N.$ Finally, the the result follows from \eqref{finalest-1} since $$\int_{\mb{R}^{n+1}_+ \times [c, \iy)}  \sigma_{s}^{-2 \alpha}(t) t^{-\mu} x_{n+1}^{-a} e^{-\f{|X|^2}{4t}} \left(\f{t \sigma'}{\sigma}\right)^{-\f{1}{2}}  \lvert \widetilde{\mc{H}_s} w \rvert ^2 \sim \int_{\mb{R}^{n+1}_+ \times [c, \iy)} \sigma_{s}^{ 1-2 \alpha}(t) x_{n+1}^{-a} \ \lvert \widetilde{\mc{H}_s} w \rvert ^2 \ G.$$ 
 \end{proof}
As a consequence of Theorem~\ref{carl1} and a translation   in time,  we obtain the following corollary.
\begin{corollary}
 \label{carl1-corr}
Under the conditions of Theorem~\ref{carl1}, we have for all large enough $\alpha$ satisfying \eqref{al} above
		\begin{align}\label{car1}
			& \alpha^2 \int_{\mb{R}^{n+1}_+ \times [0, \iy)} x_{n+1}^{a} (\sigma_{s}(t+c))^{-2 \alpha} \ w^2 \ G_{c}  + \alpha \int_{\mb{R}^{n+1}_+ \times [0, \iy)} x_{n+1}^{a} (\sigma_{s}(t+c))^{1-2 \alpha}\  |\nabla w|^2 \ G_{c}  \\
			\notag & \leq  M\int_{\mb{R}^{n+1}_+ \times [0, \iy)} \sigma_{s}^{ 1-2 \alpha}(t+c) x_{n+1}^{-a} \ \lvert \widetilde{\mc{H}_{s}} w \rvert ^2 \ G_c\\
   &+ \sigma_{s}^{-2 \alpha}(c) \left\{ - \f{c}{M} \int_{t=0} x_{n+1}^a \ |\nabla w(X,0)|^2 \ G(X,c) \ dX + \alpha M\int_{t=0} x_{n+1}^a \ |w(X, 0)|^2 \ G(X,c) \ dX\right\}.\notag
		\end{align}
		Here,  $G_{c}(X, t)=G(X,t+c) = \f{1}{{(t+c)^\f{n+1+a}{2}}} e^{-\f{|X|^2}{4(t+c)}}$ and $0 < c \le \f{1}{5\ld}. $   
\end{corollary}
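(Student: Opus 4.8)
The statement to prove is Corollary \ref{carl1-corr}, which is a "time-translated" version of Theorem \ref{carl1}. The plan is essentially bookkeeping: apply Theorem \ref{carl1} to an appropriately shifted function and match up the weights.

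\medskip

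\textbf{The approach.} The plan is to deduce the corollary directly from Theorem \ref{carl1} by the substitution $t \mapsto t+c$. Given $w \in C_0^\infty(\overline{\mathbb B_4^+}\times[0,\frac{1}{e\lambda}))$ satisfying $\py w \equiv Vw$ on $\{x_{n+1}=0\}$, define $\tilde w(X,t) := w(X, t-c)$. Then $\tilde w$ is supported in $\overline{\mathbb B_4^+}\times[c, c+\frac{1}{e\lambda})$, still satisfies the Neumann-type relation with the correspondingly time-shifted potential $\tilde V(x,t) = V(x,t-c)$ (whose $C^1$ norm equals that of $V$), and $\widetilde{\mc H}_s \tilde w(X,t) = (\widetilde{\mc H}_s w)(X,t-c)$ since $\widetilde{\mc H}_s = x_{n+1}^a\partial_t + \operatorname{div}(x_{n+1}^a\nabla)$ is translation-invariant in $t$. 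Hence Theorem \ref{carl1} applies to $\tilde w$ with the slice level $c$ replaced by $2c$ (which is still $\le \frac{1}{5\lambda}$ after shrinking the admissible range, or one simply keeps track of the level directly); more cleanly, one applies Theorem \ref{carl1} to $\tilde w$ with the cutoff slice taken at $t = c$ itself, using that $\tilde w(\cdot, t) = 0$ for $t < c$ is not literally true — so instead I would apply it on the half-line $[c,\infty)$ with the slice at $t=c$, where $\tilde w(X,c) = w(X,0)$.

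\medskip

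\textbf{Key steps in order.} First, change variables $t = \tau + c$ in every integral appearing in \eqref{care1} written for $\tilde w$. The solid integrals over $\mathbb R^{n+1}_+\times[c,\infty)$ become solid integrals over $\mathbb R^{n+1}_+\times[0,\infty)$, with $\sigma_s(t)$ becoming $\sigma_s(\tau+c)$ and the Gaussian $G(X,t) = t^{-(n+1+a)/2}e^{-|X|^2/4t}$ becoming $G(X,\tau+c) =: G_c(X,\tau)$, exactly as in the statement. Second, the boundary slice $\{t=c\}$ for $\tilde w$ becomes the slice $\{\tau = 0\}$, on which $\tilde w(X,c) = w(X,0)$ and $\nabla\tilde w(X,c) = \nabla w(X,0)$; the weight $\sigma_s^{-2\alpha}(c)$ and the value $G(X,c)$ of the Gaussian there are unchanged, matching the curly-brace term in \eqref{car1}. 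Third, one checks the right-hand side source term: $\int \sigma_s^{1-2\alpha}(t) x_{n+1}^{-a}|\widetilde{\mc H}_s\tilde w|^2 G$ over $[c,\infty)$ becomes $\int \sigma_s^{1-2\alpha}(\tau+c)x_{n+1}^{-a}|\widetilde{\mc H}_s w|^2 G_c$ over $[0,\infty)$, using the $t$-translation invariance of $\widetilde{\mc H}_s$ noted above. Finally, one notes that the hypothesis \eqref{al} on $\alpha$ and the constraint $0 < c \le \frac{1}{5\lambda}$ are preserved verbatim, and that since $\lambda = \alpha/\delta^2$ with $\delta$ small and $\tilde V$ has the same $C^1$ norm as $V$, nothing in the constants changes. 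Collecting these identifications yields \eqref{car1}.

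\medskip

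\textbf{Main obstacle.} There is no genuine analytic difficulty here — the corollary is a formal consequence of the theorem. The only point requiring a little care is the treatment of the cutoff slice: Theorem \ref{carl1} is stated with a free parameter $c$ for the lower endpoint of the time integration and for the slice at which the boundary terms are collected, and one must verify that after the shift $t\mapsto t+c$ the corollary's slice is genuinely at $\tau = 0$ while the corollary's time-shift parameter (also called $c$) plays the role of the argument offset in $\sigma_s$ and in $G$. In other words, the subtlety is purely notational: the single letter $c$ is being reused for the endpoint of integration in Theorem \ref{carl1} and for the time-translation amount in the corollary, and one must make sure these are consistently identified (they are: the shift by $c$ moves the slice from $\{t=c\}$ down to $\{t=0\}$ and simultaneously introduces the offset $\sigma_s(\cdot + c)$, $G_c$). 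Once this correspondence is spelled out, the constants $M$ and the admissible range $0 < c \le \frac{1}{5\lambda}$ transfer without change, completing the proof.
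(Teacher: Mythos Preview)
Your proposal is correct and matches the paper's approach exactly: the paper simply states that the corollary follows ``as a consequence of Theorem~\ref{carl1} and a translation in time,'' and your write-up spells out precisely this change of variable $t\mapsto t+c$ together with the identification of the slice $\{t=c\}$ with $\{\tau=0\}$. The only minor point you flag---that the shifted function $\tilde w$ has support in $[c,\,c+\tfrac{1}{e\lambda})$ rather than $[0,\tfrac{1}{e\lambda})$---is harmless, since $c\le \tfrac{1}{5\lambda}$ keeps $\lambda t$ within the range where Lemma~\ref{sigma} applies and only affects universal constants.
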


\section{Quantitative doubling estimates}\label{s:quant} In this section we prove several quantitative doubling estimates.
\subsection{Monotonicity in time}
We start with the relevant monotonicity in time result  which is motivated by Lemma 3.1 in \cite{ABDG}. See also \cite{BVS}. This represents the nonlocal counterpart of \cite[Lemma 3.2]{AB1}.    The key novelty of our estimate is the precise quantitative dependence on the required parameters.  In combination with the quantitative Carleman estimate in Corollary \ref{carl1-corr}, this allows us to establish the precise doubling estimate for the extension problem \eqref{exprob} that eventually leads to the sharp vanishing order estimate for the nonlocal equation.

Define
\begin{equation}\label{theta}
\Theta \overset{def}{=}\frac{\int_{\mathbb Q_5^+} U(X,t)^2 x_{n+1}^adXdt }{\int_{\mathbb B_1^+} U(X,0)^2 x_{n+1}^a dX}.
\end{equation}

	\begin{lemma}\label{mont} 
		Let $U$ be a solution of \eqref{exprob}. Then there exists a constant $M = M(n,a)>2$ such that $M\operatorname{log}(M\tilde\Theta) \geq 1$, and for which the following inequality 
		\begin{align*}
Me^{\|V\|_{1}^{\f{1}{2s}}}\int_{\mathbb B_2^+}U(X,t)^2 x_{n+1}^a dX
&\geq  \int_{\mathbb B_1^+}U(X,0)^2 x_{n+1}^a dX,  
\end{align*}	
holds for $$0\leq t\leq \f{1}{M\log(M(1+\|V\|_{1})\tilde \Theta)+M^2(\|V\|_{1}^{\f{1}{2s}}+1)},$$
where
\[
\tilde \Theta= \frac{\int_{\mathbb Q_4^+} U(X,t)^2 x_{n+1}^adXdt }{\int_{\mathbb B_1^+} U(X,0)^2 x_{n+1}^a dX}.\]	\end{lemma}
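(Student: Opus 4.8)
The plan is to run a Gaussian-weighted energy/frequency argument in the time variable for the weak solution $U$ of \eqref{exprob}, in the spirit of Poon's monotonicity formula and its nonlocal adaptation in \cite[Lemma 3.1]{ABDG} and the local version \cite[Lemma 3.2]{AB1}, but keeping explicit track of how the $C^1$-norm of $V$ enters. First I would fix a Gaussian $\mathcal G_c(X,t)=G(X,t+c)$ (or work with the backward heat kernel centered appropriately, after a translation $t\mapsto t+c$ as in Corollary \ref{carl1-corr}) and introduce the weighted quantities
\[
h(t)=\int_{\mathbb R^{n+1}_+} x_{n+1}^a U(X,t)^2 \,\mathcal G_c(X,t)\,dX,\qquad
D(t)=\int_{\mathbb R^{n+1}_+} x_{n+1}^a |\nabla U(X,t)|^2\,\mathcal G_c(X,t)\,dX,
\]
and the associated frequency $N(t)=D(t)/h(t)$. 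The goal is a differential inequality of the form $\frac{d}{dt}\log h(t)\ge -C\big(N(t)+\text{lower order}\big)$ together with an almost-monotonicity statement $N(t)\le C\big(N(0)+\|V\|_1^{1/2s}+\log(M\tilde\Theta)\big)$ on the relevant small time interval; integrating these over $[0,t]$ and using the trivial bound $h(0)\le C\tilde\Theta h(t)$ on a full cylinder yields the claimed lower bound for $\int_{\mathbb B_2^+}U^2x_{n+1}^a$ in terms of $\int_{\mathbb B_1^+}U(\cdot,0)^2x_{n+1}^a$, once the Gaussian weight is compared with the indicator of $\mathbb B_1^+$ resp. $\mathbb B_2^+$ for $t$ in a window of size $\sim 1/\lambda\sim 1/\alpha$ with $\alpha\sim \log(M(1+\|V\|_1)\tilde\Theta)+M(\|V\|_1^{1/2s}+1)$.

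The key computational steps, in order, are: (i) differentiate $h(t)$, use the equation $\widetilde{\mathcal H}_s U=0$ in the bulk and the Neumann-type boundary condition $\py U = Vu$ on $\{x_{n+1}=0\}$, and integrate by parts against the Gaussian; the boundary contributions produce terms $\int_{\{x_{n+1}=0\}} V u^2 \mathcal G$ and $\int V u\,\partial_t u\,\mathcal G$ which must be controlled by the trace inequality Lemma \ref{tr} (choosing the free parameter $A$ as a power of $\|V\|_1/\theta_s$, exactly as in the proof of Theorem \ref{carl1}), so that they are absorbed into $D(t)$ and $h(t)$ with a loss of at most $C\|V\|_1^{1/2s}$; (ii) differentiate $D(t)$ (or directly $N(t)$), again integrating by parts and invoking the Rellich/Hardy identities — here Lemma \ref{hardy} is the essential tool — to get $N'(t)\le C\,N(t)\cdot(\text{bounded})+C\|V\|_1^{1/2s}N(t)+\ldots$, i.e. $N$ grows at most exponentially with rate $O(1)$ on a time scale $O(1/\|V\|_1^{1/2s})$, and hence stays comparable to $N(0)$ on that scale; (iii) bound $N(0)$ from above by $\log(M\tilde\Theta)$ plus universal constants using the three-sphere / doubling bound that follows from Lemma \ref{do} applied at $t=0$, together with the definition of $\tilde\Theta$; (iv) integrate $\frac{d}{dt}\log h\ge -C N(t)$ over the window $[0,\,1/(M\log(M(1+\|V\|_1)\tilde\Theta)+M^2(\|V\|_1^{1/2s}+1))]$ to conclude $h(0)\le e^{C}h(t)$ after also absorbing the factor $e^{\|V\|_1^{1/2s}}$ coming from the boundary terms; (v) finally undo the Gaussian weighting, comparing $h(t)$ with $\int_{\mathbb B_2^+}U(X,t)^2x_{n+1}^a\,dX$ from above (Gaussian is bounded on $\mathbb B_2^+$) and $h(0)$ with $\int_{\mathbb B_1^+}U(X,0)^2 x_{n+1}^a\,dX$ from below (Gaussian is bounded below on $\mathbb B_1^+$ for $c$ of the right size), which produces the stated inequality with the constant $Me^{\|V\|_1^{1/2s}}$.

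The main obstacle — and the reason this is harder than the local case in \cite{AB1} — is step (i)–(ii): the potential $V$ now sits in the weighted Dirichlet-to-Neumann boundary condition rather than in the bulk equation, so the "extra" terms generated when differentiating $h$ and $D$ are boundary integrals of $u^2$, $u\,\partial_t u$, and $u\,\langle x,\nabla_x u\rangle$ against the Gaussian, and these have no obvious sign. Controlling them requires the trace inequality Lemma \ref{tr} with a $t$-dependent parameter $A(t)\sim(\|V\|_1/\theta_s(\lambda t))^{1/2s}$ tuned precisely so that the resulting bulk gradient term is a small multiple of $D(t)$ and the resulting bulk $L^2$ term a small multiple of $h(t)/t$ (after one application of the Hardy inequality Lemma \ref{hardy} to trade $|X|^2 u^2/t^2$ for $|\nabla u|^2 + u^2/t$) — this is exactly the place where the exponent $1/2s$ and hence the factor $e^{\|V\|_1^{1/2s}}$ is forced, and where the choice of the weight $\theta_s$ from Lemma \ref{sigma} is again crucial. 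Everything else is a fairly routine, if lengthy, integration-by-parts bookkeeping following \cite{ABDG} and \cite{AB1}.
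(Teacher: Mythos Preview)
Your plan differs substantially from the paper's proof, and I believe step (iii) contains a genuine gap. The paper does \emph{not} run a Poon-type frequency argument here. Instead it fixes a variable base point $Y\in\mathbb B_1^+$, sets $f=\phi U$ with a cutoff $\phi\in C_0^\infty(\mathbb B_2)$, and studies
\[
H(t)=\int_{\mathbb R^{n+1}_+} x_{n+1}^a f(X,t)^2\,\mathcal G(Y,X,t)\,dX,
\]
where $\mathcal G(Y,X,t)=p(y,x,t)\,p_a(x_{n+1},y_{n+1};t)$ is the fundamental solution of the extended operator (heat kernel times the Bessel kernel). Differentiating gives $H'(t)=J_1+J_2+J_3$ with $J_2=2\int x_{n+1}^a|\nabla f|^2\mathcal G\ge 0$, a cutoff error $J_1\ge -Ce^{-1/Mt}(1+\|V\|_1)\int_{\mathbb Q_4^+}U^2 x_{n+1}^a$, and the boundary term $J_3=2\int_{\{x_{n+1}=0\}}Vf^2G$. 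The latter is controlled by the trace inequality with the choice $A\simeq \|V\|_1^{1/(4s)}/\sqrt{t}$ (not the Carleman choice $A\sim(\|V\|_1/\theta_s)^{1/2s}$ you suggest) together with Hardy, so that the gradient piece is absorbed into $J_2$ and the remainder gives $H'(t)\ge -M\|V\|_1^{\frac12+\frac1{2s}}t^{-\frac{1+a}{2}}H(t)-Ce^{-1/Mt}(1+\|V\|_1)\|U\|^2_{L^2(\mathbb Q_4^+,x_{n+1}^a)}$. One then multiplies by the integrating factor $\exp\big(\frac{2M(1+\|V\|_1^{1/2+1/2s})}{1-a}t^{(1-a)/2}\big)$, integrates from $0$, uses the crucial fact $\lim_{t\to0^+}H(t)=U(Y,0)^2$ (the fundamental solution concentrates to a Dirac mass), and finally integrates in $Y$ over $\mathbb B_1^+$ using $\int\mathcal G(X,Y,t)y_{n+1}^a\,dY=1$. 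The exponential error is killed by choosing $t$ in the stated window, which is exactly where the $\log\tilde\Theta$ arises.

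Your step (iii) is the problem: you propose to bound $N(0)$ by $\log(M\tilde\Theta)$ via Lemma \ref{do}, but Lemma \ref{do} has as its \emph{hypothesis} precisely a Gaussian frequency bound---it does not produce one. In the paper that hypothesis is established later (Theorem \ref{db1}, Step 6) using both the Carleman estimate \emph{and} the present Lemma \ref{mont}; invoking it here is circular. The only non-circular bound available for $N(0)$ is Caccioppoli/Lemma \ref{reg1}, which gives $N(0)\lesssim (1+\|V\|_1)\tilde\Theta$ rather than $\log\tilde\Theta$, and that would shrink your admissible time window to $O\big(1/((1+\|V\|_1)\tilde\Theta)\big)$ instead of the claimed $O\big(1/(\log((1+\|V\|_1)\tilde\Theta)+\|V\|_1^{1/2s})\big)$, so the lemma as stated would not follow. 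The moving-center trick with $\mathcal G(Y,\cdot,\cdot)$ and the limit $H(0^+)=U(Y,0)^2$ is precisely what lets the paper bypass any frequency bound at $t=0$; you should adopt that mechanism rather than the frequency route.
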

\begin{proof}
Let  $f= \phi\, U,$ where $\phi \in C_0^{\infty}(\mathbb B_2)$ is a spherically symmetric cutoff such that $0\le \phi\le 1$ and $\phi \equiv1$ on $\mathbb B_{3/2}.$ Considering the symmetry of $\phi$ in $x_{n+1}$ variable and the fact that $U$ solves \eqref{exprob}, we obtain
		\begin{equation}\label{feq}
			\begin{cases}
				x_{n+1}^a f_t + \operatorname{div}(x_{n+1}^a \nabla f) = 2 x_{n+1}^a \langle\nabla U,\nabla \phi\rangle  + \operatorname{div}(x_{n+1}^a \nabla \phi) U\ \ \ \ \ \ \ \text{in} \ \mathbb Q_{4}^+,
				\\	
				f(x,0,t)= u(x,t)\phi(x,0)
				\\
				\py f(x,0, t)= Vf\ \ \ \ \ \ \ \ \ \ \ \ \ \ \ \ \ \ \ \ \ \ \ \ \ \ \ \ \ \ \ \ \ \text{in}\ Q_4.
			\end{cases}
		\end{equation}
For a fixed point $Y \in \mb{R}^{n+1}_+,$ we introduce the quantity
\begin{align*}
			H(t) = \int_{\mb{R}^{n+1}_+} x_{n+1}^a f(X,t)^2 \mc{G}(Y,X,t) dX,
		\end{align*}
		where $\mc{G}(Y,X,t) = p(y, x, t) \ p_a(x_{n+1},y_{n+1};t),$ and  $p(y,x,t)$ is the heat-kernel associated to $\left(\dd_t - \Delta_{x}\right)$ and $p_a$ is the fundamental solution of  the Bessel operator $\dd^2_{x_{n+1}} + \f{a}{x_{n+1}}\dd_{x_{n+1}}$. It is well-known that $p_a$ is given by the formula
   \begin{equation}\label{pa}
   	p_a(x_{n+1}, y_{n+1}; t) = (2t)^{- \f{1+a}{2}} e^{-\f{x_{n+1}^2 + y_{n+1}^2}{4t}} \left( \f{x_{n+1} y_{n+1}}{2t} \right)^{ \f{1-a}{2}} I_{\f{a-1}{2}}\left( \f{x_{n+1} y_{n+1}}{2t} \right),
   \end{equation}
   where $I_\nu (z)$ the modified Bessel function of the first kind defined by the series
   
   \begin{align}\label{besseries}
   	I_{\nu}(z) = \sum_{k=0}^{\infty}\frac{(z/2)^{\nu+2k} }{\Gamma (k+1) \Gamma(k+1+\nu)}, \hspace{4mm} |z| < \infty,\; |\operatorname{arg} z| < \pi.
   \end{align}
   Also, for $t>0$, $\mc{G} = \mc{G}(Y, \cdot)$ solves $\operatorname{div}(x_{n+1}^a \nabla \mc{G}) = x_{n+1}^a \partial_t \mc{G}.$ On the thin set $\{x_{n+1}=0\},$ $\mc G$ stands for $\mc G (Y,(x, 0), t).$
Now, \begin{align}\label{hprime}
			H'(t) & = 2 \int x_{n+1}^a f f_t \mc{G} + \int x_{n+1}^a f^2 \dd_t\mc{G} \\
			& = 2 \int x_{n+1}^a f f_t \mc{G} + \int f^2 \tn{div}\left(x_{n+1}^a \nabla \mc{G} \right)\notag \\
   & = 2 \int x_{n+1}^a f f_t \mc{G} - \int x_{n+1}^a \langle \nabla(f^2),  \nabla \mc{G} \rangle \notag \\
   & = 2 \int x_{n+1}^a f f_t \mc{G}+ \int \operatorname{div}( x_{n+1}^a \nabla(f^2))\mc{G}+2\int_{\{x_{n+1}=0\}} Vf^2 G\notag \\
			& = 2 \int f \mc{G} \left( x_{n+1}^a f_t + \tn{div} \left(x_{n+1}^a \cdot \nabla f \right) \right) + 2 \int x_{n+1}^a  \mc{G} |\nabla f|^2+2\int_{\{x_{n+1}=0\}} Vf^2 G\notag\\
   &=J_1+J_2+J_3,\label{ja}
		\end{align}
where $\mc G (Y,(x, 0), t)=G(y, x, t)=\frac{(4\pi)^{-\frac n2} 2^{-a}}{\Gamma((1+a)/2)} t^{-\frac{n+a+1}{2}} e^{-\frac{|x-y|^2+y_{n+1}^2}{4t}}$ for a fixed $Y\in \mathbb B_1^+,$ see (3.20) in \cite{ABDG} for more details. The following two estimates can be verified from \cite{ABDG} and \cite{BVS}. We refer the reader to equations $(3.13)$ and $(3.25)$ in \cite{ABDG}. There exists an universal constant $M>0$ such that \begin{itemize}
\item[(i)] for every $Y\in \mathbb B_1^+$ and $0<t\le 1$ we have 
\begin{equation}\label{i1}
J_1 \geq - C e^{-\frac{1}{M t}} M(1+\|V\|_{1})\int_{\mathbb Q_4^+} U^2 x_{n+1}^a dXdt.
\end{equation}
It is to be noted that (i) is a consequence of the regularity estimates as stated in Lemma \ref{reg1}. 

\item[(ii)] for every $Y\in \mathbb B_1$ and $0<t\le 1$ one has
\begin{align}\label{mk1}
|J_3| & \leq C(n,a)\|V\|_{1}\bigg(A^{1+a}\int f^2 \mc G x_{n+1}^a dX + \frac{n+a+1}{4t} A^{a-1} \int f^2 \mc G  x_{n+1}^a dX 
\\
& + A^{a-1} \int |\nabla f|^2 \mc G  x_{n+1}^a dX\bigg)
\notag
\end{align}
for all $A>1.$
\end{itemize}
Now we have to carefully choose $A.$ We let $A$ such that  $A\simeq \frac{\|V\|_{1}^{\f{1}{4s}}}{\sqrt{t}}.$ Then we obtain

\begin{align}\label{mk2}
|J_3| & \leq C(n, a)\|V\|_{1}\bigg(\|V\|_{1}^{\f{1-s}{2s}}t^{-\f{1+a}{2}}\int f^2 \mc G x_{n+1}^a dX + \frac{n+a+1}{4t} \|V\|_{1}^{-\f{1}{2}} t^{\f{1-a}{2}} \int f^2 \mc G x_{n+1}^a dX 
\\
& + \|V\|_{1}^{-\f{1}{2}} t^{\f{1-a}{2}} \int |\nabla f|^2 \mc G x_{n+1}^a dX\bigg).
\notag
\end{align}

With \eqref{i1} and \eqref{mk2}, we return to \eqref{ja} to obtain
\begin{align}\label{ab0}
H'(t)  \ge & - C e^{-\frac{1}{M t}} M(1+\|V\|_{1}) \int_{\mathbb Q_4^+} U^2 x_{n+1}^a dXdt + 2\int  |\nabla f|^2   \mc G x_{n+1}^a dX
\\
& -M\|V\|_{1}\bigg(\|V\|_{1}^{\f{1-s}{2s}}t^{-\f{1+a}{2}}\int f^2 \mc G x_{n+1}^a dX + \frac{n+a+1}{4t} \|V\|_{1}^{-\f{1}{2}} t^{\f{1-a}{2}} \int f^2 \mc G x_{n+1}^a dX \notag
\\
& + \|V\|_{1}^{-\f{1}{2}} t^{\f{1-a}{2}} \int |\nabla f|^2 \mc G x_{n+1}^a dX\bigg).
\notag
\end{align}
Now we choose $t$ such that $t M(1+\|V\|_{1}^{\f{1}{2s}})<\f{1}{M},$ then a simple calculation shows that 
\begin{equation}\label{ab1}
M \|V\|_{1}  \|V\|_{1}^{-\f{1}{2}} t^{\f{1-a}{2}} \int |\nabla f|^2 \mc G x_{n+1}^a dX \leq \int |\nabla f|^2 \mc G x_{n+1}^a dX.\end{equation}
Therefore the term $-M \|V\|_{1} \|V\|_{1}^{-\f{1}{2}} t^{\f{1-a}{2}} \int |\nabla f|^2 \mc G x_{n+1}^a dX$ in \eqref{ab0} can be absorbed in  the positive term $2 \int |\nabla f|^2 \mc G x_{n+1}^a dX$.  Using also that
\[
\|V\|_1^{\f{1-s}{2s}} \geq \|V\|_1^{-\f{1}{2}}, (\text{in view of \eqref{V1} which in particular implies that $\|V\|_1 \geq 1$)}
\]
we deduce from \eqref{ab0} that the following holds
\begin{align}
\notag H'(t)  \ge & -  e^{-\frac{1}{M t}} M(1+\|V\|_{1}) \int_{\mathbb Q_4^+} U^2 x_{n+1}^a dXdt-M\|V\|_{1} \|V\|_{1}^{\f{1-s}{2s}} t^{-\f{1+a}{2}}H(t)\\
&=- e^{-\frac{1}{M t}} M(1+\|V\|_{1}) \int_{\mathbb Q_4^+} U^2 x_{n+1}^a dXdt-M\|V\|_{1}^{\f{1}{2}+\f{1}{2s}} t^{-\f{1+a}{2}}H(t).
\label{mk3}
\end{align}
The above can also be written as 
\begin{align}
\label{mk4}
\left(e^{\big(\f{2M(1+\|V\|_{1}^{\f{1}{2}+\f{1}{2s}})}{1-a}t^{\f{1-a}{2}}\big)} H(t)\right)'\geq -M(1+\|V\|_{1}) e^{\big(\f{2M(1+\|V\|_{1}^{{\f{1}{2}+\f{1}{2s}}})}{1-a}t^{\f{1-a}{2}}\big)} e^{-\frac{1}{M t}}  \int_{\mathbb Q_4^+} U^2 x_{n+1}^a dXdt.
\end{align}
Upon integrating and using the fact that $\underset{t\to 0^+}{\lim}\ H(t) = f(Y,0)^2 = U(Y,0)^2,$ ( see (3.6) in \cite{ABDG}) we obtain

\begin{align}
\notag &e^{\big(\f{2M(1+\|V\|_{1}^{\f{1}{2}+\f{1}{2s}})}{1-a}t^{\f{1-a}{2}}\big)} H(t)\geq U(Y,0)^2 -M(1+\|V\|_{1}) \|U\|_{L^2(\mathbb Q_4^+, x_{n+1}^a)}^2 \int_{0}^{t} e^{\big(\f{2M(1+\|V\|_{1}^{\f{1}{2}+\f{1}{2s}})}{1-a}\eta^{\f{1-a}{2}}\big)} e^{-\frac{1}{M \eta}} d\eta \\
\notag &\implies e^{\big(\f{2M(1+\|V\|_{1}^{\f{1}{2}+\f{1}{2s}})}{1-a}t^{\f{1-a}{2}}\big)} H(t)\geq U(Y,0)^2 -M(1+\|V\|_{1}) \|U\|_{L^2(\mathbb Q_4^+, x_{n+1}^a)}^2 \int_{0}^{t} e^{\big(\f{2M(1+\|V\|_{1}^{\f{1}{2}+\f{1}{2s}}}{1-a}\eta^{\f{1-a}{2}}\big)} e^{-\frac{1}{M \eta}} d\eta\\
&\implies e^{\big(\f{2M(1+\|V\|_{1}^{\f{1}{2}+\f{1}{2s}})}{1-a}t^{\f{1-a}{2}}\big)} H(t)\geq U(Y,0)^2 -M(1+\|V\|_{1}) \|U\|_{L^2(\mathbb Q_4^+, x_{n+1}^a)}^2 t e^{\big(\f{2M(1+\|V\|_{1}^{\f{1}{2}+\f{1}{2s}})}{1-a}t^{\f{1-a}{2}}\big)} e^{-\frac{1}{M t}}.\label{useful}
\end{align}
Integrating now \eqref{useful} with respect to $Y\in \mathbb B_1^+$, exchanging the order of integration and using that $\int \mc G(X, Y, t)\, y_{n+1}^a dy=1,$ and finally renaming $Y$ as $X$ we obtain
\begin{align}
\label{mk5} e^{\big(\f{2M(1+\|V\|_{1}^{\f{1}{2}+\f{1}{2s}})}{1-a}t^{\f{1-a}{2}}\big)}\int_{\mathbb B_2^+}U(X,t)^2 x_{n+1}^a dX
&\geq  \int_{\mathbb B_1^+}U(X,0)^2 x_{n+1}^a dX \\
&-M(1+\|V\|_{1}) \|U\|_{L^2(\mathbb Q_4^+, x_{n+1}^a)}^2 e^{\big(\f{2M(1+\|V\|_{1}^{\f{1}{2}+\f{1}{2s}})}{1-a}t^{\f{1-a}{2}}\big)} e^{-\frac{1}{M t}}.\notag 
\end{align}
Let us choose $t$ such that $t\leq \f{1}{10M\log(M(1+\|V\|_{1})\tilde \Theta)+10M^2(\|V\|_{1}^{\f{1}{2s}}+1)}.$ Then 
\begin{align*}
&M(1+\|V\|_{1}) \|U\|_{L^2(\mathbb Q_4^+, x_{n+1}^a)}^2 e^{\big(\f{2M(1+\|V\|_{1}^{\f{1}{2}+\f{1}{2s}})}{1-a}t^{\f{1-a}{2}}\big)} e^{-\frac{1}{M t}}\\
&\lesssim M(1+\|V\|_{1}) \|U\|_{L^2(\mathbb Q_4^+, x_{n+1}^a)}^2 e^{\f{\|V\|_{1}^{1/2s}}{10M}} e^{-\frac{1}{M t}}\\
&\leq\f{M(1+\|V\|_{1})}{2M(1+\|V\|_{1})\tilde \Theta} \|U\|_{L^2(\mathbb Q_4^+, x_{n+1}^a)}^2\\
&\leq \f{1}{2}\int_{\mathbb B_1^+}U(X,0)^2 x_{n+1}^a dX.    
\end{align*}
Using this in \eqref{mk5}, we deduce that for $0\leq t\leq \f{1}{10M\log(M(1+\|V\|_{1})\tilde \Theta)+10M^2(\|V\|_{1}^{\f{1}{2s}}+1)}$, the following inequality holds 
\begin{align*}
e^{\big(\f{2M(1+\|V\|_{1}^{\f{1}{2}+\f{1}{2s}})}{1-a}t^{\f{1-a}{2}}\big)}\int_{\mathbb B_2^+}U(X,t)^2 x_{n+1}^a dX
&\geq  \int_{\mathbb B_1^+}U(X,0)^2 x_{n+1}^a dX.    
\end{align*}
Thus we have
\begin{align*}
Me^{\|V\|_{1}^{\f{1}{2s}}}\int_{\mathbb B_2^+}U(X,t)^2 x_{n+1}^a dX
&\geq  \int_{\mathbb B_1^+}U(X,0)^2 x_{n+1}^a dX,  
\end{align*}
completing the proof.
\end{proof}

\begin{corollary}\label{mont-cor}
Let $U$ be a solution of \eqref{exprob}. Then there exists a constant $M = M(n,a)>2$ such that the following inequality 
\begin{align}
\label{mk6}
M e^{\|V\|_{1}^{\f{1}{2s}}}\int_{\mathbb B^{+}_{2\rho}}U(X,t)^2 x_{n+1}^a dX
&\geq  \int_{\mathbb B^{+}_{\rho}}U(X,0)^2 x_{n+1}^a dX,  
\end{align}	
holds for $$0\leq t\leq \f{\rho^2}{10M\log(M(1+\|V\|_{1})\Theta_{\rho})+10M^2(\|V\|_{1}^{\f{1}{2s}}+1)},\,\, 0\leq \rho<1,$$
where \begin{equation}\label{theta-rho}
\Theta_{\rho} \overset{def}{=}\frac{\int_{\mathbb Q^{+}_{4}} U(X,t)^2 x_{n+1}^adXdt }{\rho^2 \int_{\mathbb B^{+}_\rho} U(X,0)^2 x_{n+1}^a dX}.
\end{equation}
\end{corollary}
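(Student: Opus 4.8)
The plan is to reduce the statement to Lemma~\ref{mont} by the parabolic rescaling adapted to $\widetilde{\mc{H}}_s$. Fix $0<\rho<1$ and set
$$U_\rho(X,t):=U(\rho X,\rho^2 t),\qquad V_\rho(x,t):=\rho^{2s}V(\rho x,\rho^2 t),\qquad u_\rho(x,t):=u(\rho x,\rho^2 t).$$
Since $\widetilde{\mc{H}}_s$ transforms under the dilation $\delta_\rho(X,t)=(\rho X,\rho^2 t)$ by a factor $\rho^{2-a}$, and the weighted normal derivative $\py$ transforms by the factor $\rho^{1-a}=\rho^{2s}$ (here I use $a=1-2s$), a direct computation shows that $U_\rho$ again solves \eqref{exprob} — on $\mathbb B_{5/\rho}^+\times[0,25/\rho^2)\supset\mathbb B_5^+\times[0,25)$ — with $V_\rho$ in place of $V$ and boundary datum $u_\rho$. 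Thus Lemma~\ref{mont} applies to the pair $(U_\rho,V_\rho)$.

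Next I would record two elementary monotonicity facts which, since $\rho<1$, make everything go the right way. First, $2s>0$ and the point $(\rho x,\rho^2 t)$ runs through $Q_{5\rho}\subset Q_5$, while the chain rule attaches to $V,\nabla_x V,\partial_t V$ the harmless factors $\rho^{2s},\rho^{2s+1},\rho^{2s+2}\le 1$; hence $\|V_\rho\|_1\le\|V\|_1$. Second, performing the substitution $Y=\rho X,\ \tau=\rho^2 t$ in the quantity $\widetilde\Theta$ of Lemma~\ref{mont} associated with $U_\rho$, the Jacobians cancel except for a factor $\rho^{-2}$ on the denominator, and the spatial part of the numerator domain shrinks from $\mathbb Q_4^+$ to $\mathbb Q_{4\rho}^+$; since the integrand is nonnegative this gives
$$\widetilde\Theta(U_\rho)=\frac{\int_{\mathbb Q_{4\rho}^+}U^2 x_{n+1}^a\,dX\,dt}{\rho^2\int_{\mathbb B_\rho^+}U(X,0)^2 x_{n+1}^a\,dX}\ \le\ \Theta_\rho,$$
with $\Theta_\rho$ as in \eqref{theta-rho}. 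Lemma~\ref{mont} applied to $U_\rho$ also yields $M\log(M\widetilde\Theta(U_\rho))\ge1$, which forces the logarithmic term below to be positive.

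Finally I would feed $(U_\rho,V_\rho)$ into Lemma~\ref{mont} and undo the change of variables. In the conclusion of Lemma~\ref{mont} the common Jacobian $\rho^{(n+1)+a}$ cancels between the two sides, and after writing $\tau=\rho^2 t$ the inequality reads
$$Me^{\|V_\rho\|_1^{1/2s}}\int_{\mathbb B_{2\rho}^+}U(X,\tau)^2 x_{n+1}^a\,dX\ \ge\ \int_{\mathbb B_\rho^+}U(X,0)^2 x_{n+1}^a\,dX,$$
valid for $0\le\tau\le\rho^2\big(M\log(M(1+\|V_\rho\|_1)\widetilde\Theta(U_\rho))+M^2(\|V_\rho\|_1^{1/2s}+1)\big)^{-1}$. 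Now $\|V_\rho\|_1\le\|V\|_1$ upgrades the exponent on the left to $\|V\|_1^{1/2s}$, and $\|V_\rho\|_1\le\|V\|_1$ together with $\widetilde\Theta(U_\rho)\le\Theta_\rho$ shows the denominator controlling the range of $\tau$ is bounded by $10M\log(M(1+\|V\|_1)\Theta_\rho)+10M^2(\|V\|_1^{1/2s}+1)$ — which is exactly why the Corollary is stated with the coarser constants $10M,10M^2$. Relabeling $\tau$ as $t$ yields \eqref{mk6}.

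The whole argument is routine given Lemma~\ref{mont}; the only steps I would double-check carefully are the exact homogeneity exponents of $\widetilde{\mc{H}}_s$ and of the condition $\py U=Vu$ on $\{x_{n+1}=0\}$ — getting the power $\rho^{2s}$ in $V_\rho$ right — and the bookkeeping confirming that lowering $\rho$ below $1$ only decreases both $\|V_\rho\|_1$ and $\widetilde\Theta(U_\rho)$, so that no $\rho$-dependence leaks into the constant $M=M(n,a)$. There is no genuinely new obstacle here.
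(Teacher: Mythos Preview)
Your proof is correct and follows exactly the route the paper takes: apply Lemma~\ref{mont} to the parabolic rescaling $\tilde U(X,t)=U(\rho X,\rho^2 t)$ and then use $\int_{\mathbb Q_{4\rho}^+}U^2 x_{n+1}^a\,dXdt\le\int_{\mathbb Q_4^+}U^2 x_{n+1}^a\,dXdt$ to replace $\widetilde\Theta(U_\rho)$ by $\Theta_\rho$. You have in fact supplied more detail than the paper does --- in particular the correct scaling $V_\rho=\rho^{2s}V(\rho\cdot,\rho^2\cdot)$ and the verification that $\|V_\rho\|_1\le\|V\|_1$ --- so there is nothing to add.
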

\begin{proof}
The proof follows from applying Lemma~\ref{mont} to the rescaled solution  $\Tilde{U}(X, t):=U(\rho X, \rho^2 t)$ and the fact that $\int_{\mathbb Q_{4\rho}^{+}} U(X,t)^2 x_{n+1}^adXdt\leq \int_{\mathbb Q_{4}^{+}} U(X,t)^2 x_{n+1}^adXdt.$
\end{proof}

\subsection{Quantitative two-ball one-cylinder inequalities}
Using the Carleman estimate  \eqref{car1} and Lemma \ref{mont},  we now prove our main quantitative doubling estimate as in \eqref{dbthin} below by adapting some ideas in \cite{EFV_2006}. Such an estimate  finally  allows us to derive Theorem \ref{main}. The proof of Theorem \ref{db1} below is rather involved and therefore we will divide the proof into several steps for readers' convenience. 

\begin{theorem}\label{db1}
		Let $U$ be a solution of \eqref{exprob} in $\mathbb Q_5^+.$ There exists a universal large constant $M>2$ and $\rho \in (0,1)$, depending on $n$, $a$  such that for $r < 1/2,$ we have 
			\begin{align} \int_{\mathbb B_{2r}^{+}}  U(X,0)^2 x_{n+1}^a dX \leq N\int_{\mathbb B_{r}^{+}}   U(X,0)^2x_{n+1}^a\label{dbthin} dX,\end{align}
where $N=\exp\{M(\log(M(1+\|V\|_{1})\Theta_{\rho})+\|V\|_{1}^{1/2s})\}$.
	\end{theorem}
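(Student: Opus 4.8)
The plan is to run the standard Carleman-estimate argument for a space-like doubling inequality, but keeping careful track of the quantitative dependence on $\|V\|_1$. First I would apply Corollary \ref{carl1-corr} with the weight parameter $\alpha$ chosen at the threshold $\alpha \sim M(\log(M(1+\|V\|_1)\Theta_\rho) + \|V\|_1^{1/2s})$; note this is the sum of the ``pure'' vanishing-order exponent $\|V\|_1^{1/2s}$ and the logarithmic term forced by the ratio $\Theta_\rho$. The function $w$ to plug in is a cutoff of $U$: set $w = \eta(X)\, U(X,t)$ where $\eta$ is supported in $\mathbb B_4^+$ and equals $1$ on $\mathbb B_3^+$, chosen spherically symmetric so that $\py w \equiv Vw$ on $\{x_{n+1}=0\}$ continues to hold (the cutoff does not interact with the thin boundary). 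Then $\widetilde{\mc H}_s w$ is supported in the annular region where $\nabla\eta \neq 0$, i.e. away from the origin, so the right-hand ``error'' term $\int \sigma_s^{1-2\alpha} x_{n+1}^{-a}|\widetilde{\mc H}_s w|^2 G_c$ is controlled by $e^{-c\alpha}$ times $\int_{\mathbb Q_4^+} U^2 x_{n+1}^a$, using the Gaussian decay of $G_c$ and $\sigma_s(t)\sim t$. The Cauchy data term at $t=0$ on the right is handled by the monotonicity Corollary \ref{mont-cor}, which converts the half-ball mass of $U(\cdot,0)$ on $\mathbb B_\rho^+$ into a uniform lower bound for the left-hand integrand near $t=0$, and this is exactly where the choice of $c\sim 1/(5\ld)\sim \delta^2/\alpha$ and the time threshold in Corollary \ref{mont-cor} must be reconciled — I expect this reconciliation to be the main bookkeeping obstacle.

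Next, to produce the doubling inequality itself, I would localize the left-hand side of \eqref{car1} in space. The term $\alpha^2 \int x_{n+1}^a (\sigma_s(t+c))^{-2\alpha} w^2 G_c$ dominates; restricting the integration to the region $\mathbb B_{2r}^+ \times [0, T_0)$ for a suitable small time window $T_0 \sim c$, and using that on the smaller ball $\mathbb B_r^+$ the weight $(\sigma_s(t+c))^{-2\alpha}$ and the Gaussian $G_c$ are comparable to their values elsewhere only up to a factor $e^{C\alpha}$, I would compare the mass of $U^2$ on the annulus $\mathbb B_{2r}^+\setminus \mathbb B_r^+$ to the mass on $\mathbb B_r^+$. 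Concretely: on $\mathbb B_{2r}^+$ the weight $\sigma_s(t+c)^{-2\alpha}G_c$ is bounded below by $e^{-C\alpha}$ (since $|X| \lesssim r \lesssim 1$ and $t+c \lesssim 1$), while on the region where $\nabla\eta\neq 0$ (i.e. $|X|\gtrsim 3$) it is bounded above by $e^{-c'\alpha}$ with $c' > C$; this gap, absorbed into the error term above, yields $\int_{\mathbb B_{2r}^+\times[0,T_0)} x_{n+1}^a U^2 G_c \lesssim e^{C\alpha}(\text{Cauchy term})$, and the Cauchy term is $\lesssim e^{C\alpha}\int_{\mathbb B_r^+} U(X,0)^2 x_{n+1}^a$ by Corollary \ref{mont-cor}. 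Finally one integrates in $t$ over $[0,T_0)$ using $\int_{\mathbb B_{2r}^+} x_{n+1}^a U(X,0)^2 \lesssim \frac{1}{T_0}\int_{\mathbb B_{2r}^+\times[0,T_0)} x_{n+1}^a U^2$ via another application of the monotonicity estimate (or of Lemma \ref{do}, for which the condition $a \le \frac{1}{12N}$ — i.e. $s$ close to $1$, or more precisely the structural hypothesis — must be checked). Collecting the exponential constants gives $N = \exp\{M(\log(M(1+\|V\|_1)\Theta_\rho) + \|V\|_1^{1/2s})\}$ precisely because $\alpha$ was chosen at that threshold.

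The hard part, as in \cite{EFV_2006} and \cite{ABDG}, will be the delicate matching of scales: the Carleman weight forces a time window of size $c \sim \delta^2/\alpha$, while the monotonicity lemma (Corollary \ref{mont-cor}) is only valid for $t \lesssim \rho^2/(M\log(M(1+\|V\|_1)\Theta_\rho) + M^2\|V\|_1^{1/2s})$; one must choose $\rho$ small (universal) and $\delta$ small so that these two windows are compatible, and then verify that the $\log \Theta_\rho$ appearing in the monotonicity threshold is the same $\log\Theta_\rho$ that appears in the final exponent $N$ — otherwise the logarithmic losses would compound. A secondary technical point is ensuring the cutoff $\eta$ keeps the Neumann condition $\py w = Vw$ intact and that the commutator terms $[\widetilde{\mc H}_s, \eta]U = 2x_{n+1}^a\langle\nabla\eta,\nabla U\rangle + \mathrm{div}(x_{n+1}^a\nabla\eta)U$ are genuinely supported away from $X=0$ so their contribution is exponentially small; this uses the interior regularity of $U$ from Lemma \ref{reg1}. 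Once these scale choices are pinned down the rest is routine bookkeeping of exponentials.
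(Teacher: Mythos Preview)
Your proposal has the right ingredients (Carleman estimate, monotonicity, cutoff preserving the Neumann condition) but the mechanism by which you extract the doubling inequality is not correct, and differs essentially from the paper's argument.

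The central gap is in your second paragraph. You claim that after absorbing the bulk error term, the positive Cauchy term on the right of \eqref{car1}, namely $\sigma_s^{-2\alpha}(c)\,\alpha M\int x_{n+1}^a |f(X,0)|^2 G(X,c)\,dX$, is bounded by $e^{C\alpha}\int_{\mathbb B_r^+} U(X,0)^2 x_{n+1}^a\,dX$. But this integral is taken over the full support of $f$ (a ball of radius $\sim 3$), not over $\mathbb B_r^+$; bounding the weighted $L^2$ mass of $U(\cdot,0)$ on a large ball by its mass on $\mathbb B_r^+$ is precisely the doubling inequality you are trying to prove, so the argument is circular. Corollary \ref{mont-cor} does not help here: it transfers mass from $t>0$ to $t=0$, not from a large spatial ball to a small one. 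Similarly, your step ``$\int_{\mathbb B_{2r}^+} U(X,0)^2 \lesssim \frac{1}{T_0}\int_{\mathbb B_{2r}^+\times[0,T_0)} U^2$'' goes in the wrong direction for the monotonicity lemma as stated.

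The paper's route is different and cleaner. One does \emph{not} localize the left-hand side of \eqref{car1} to $\mathbb B_{2r}^+$. Instead, one restricts the left-hand side to a fixed small ball $\mathbb B_{2\rho}^+$ (with $\rho$ universal) and a short time interval, and uses Corollary \ref{mont-cor} to bound it below by a constant times $\int_{\mathbb B_\rho^+} U(X,0)^2 x_{n+1}^a\,dX = (\rho^2\Theta_\rho)^{-1}\int_{\mathbb Q_4^+} U^2 x_{n+1}^a$. With $\alpha \sim M(\log(M(1+\|V\|_1)\Theta_\rho)+\|V\|_1^{1/2s})$ and $\rho$ chosen small enough (universally), this lower bound beats the bulk error term on the right, which scales like $(M\alpha)^{2\alpha}\int_{\mathbb Q_4^+} U^2$. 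After this absorption the Carleman inequality collapses to a relation between the \emph{two} $t=0$ boundary terms alone: $\frac{c}{M}\int x_{n+1}^a |\nabla f(X,0)|^2 G(X,c) \le \alpha M \int x_{n+1}^a |f(X,0)|^2 G(X,c)$, valid for \emph{all} $c$ in a range $c \lesssim \rho^2/\alpha$. This is exactly the hypothesis of Lemma \ref{do} (with $h=f(\cdot,0)$, $b=c$, $N\sim M^2\alpha$), and that lemma is what converts the Gaussian-weighted gradient bound into the doubling inequality for every $r<1/2$. Note also that the ``$a\le \frac{1}{12N}$'' in Lemma \ref{do} is a condition on the Gaussian scale parameter (a typo for $b$), not on the degeneracy exponent $a=1-2s$; there is no restriction on $s$.

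A secondary but genuine technical gap: your cutoff $w=\eta(X)U$ is purely spatial, but Corollary \ref{carl1-corr} requires $w$ compactly supported in $[0,\frac{1}{e\lambda})$ in time as well. The paper takes $f=\eta(t)\phi(X)U$ with $\eta(t)$ supported in $[0,\frac{1}{4\lambda})$, $\eta\equiv 1$ on $[0,\frac{1}{8\lambda}]$; the commutator with $\eta'(t)$ produces an additional error term supported in $t\sim 1/\lambda$, which is handled in Step 3 of the paper's proof alongside the spatial commutator.
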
 	
\begin{proof} Let us highlight the key steps in the proof. The key ingredients are  the quantitative  Carleman estimate in Corollary \ref{carl1-corr} and the quantitative  monotonicity in time result in  Lemma~\ref{mont}.
\medskip

\noindent{\textbf{Step 1:}} Let  $f= \eta(t)\phi(X) U,$ where $\phi \in C_0^{\infty}(\mathbb B_3)$ is a spherically symmetric cutoff such that $0\le \phi\le 1$ and $\phi \equiv1$ on $\mathbb B_{2}.$ Moreover, let $\eta$ be a cutoff in time such that $\eta=1$ on $[0, \f{1}{8\lambda}]$ and supported in $[0, \f{1}{4\lambda}).$ Since $U$ solves \eqref{exprob}, we see that the function $f$ solves the problem
\begin{equation}\label{feq1}
\begin{cases}
x_{n+1}^a f_t + \operatorname{div}(x_{n+1}^a \nabla f) =\phi x_{n+1}^a U\eta_{t}+ 2 x_{n+1}^a \eta\langle\nabla U,\nabla \phi\rangle  + \eta\operatorname{div}(x_{n+1}^a \nabla \phi) U\ \ \ \ \ \ \ \text{in} \ \mathbb Q_5^+,
\\	
f((x,t),0)= u(x,t)\phi(x,0)\eta(t)
\\
\py f((x,t),0)= V(x,t) f(x,t)\ \ \ \ \ \ \ \ \ \ \ \ \ \ \ \ \ \ \ \ \ \ \ \ \ \ \ \ \ \ \ \ \ \text{in}\ Q_5.
\end{cases}
\end{equation}
Since $\phi$ is symmetric in the $x_{n+1}$ variable, we have $\phi_{n+1}\equiv 0$ on $\{x_{n+1}=0\}$. Since $\phi$ is smooth, the following estimates are true, see \cite[(3.31)]{ABDG}.
\begin{equation}\label{obs1}
\begin{cases}
\operatorname{supp} (\nabla \phi) \cap \{x_{n+1}>0\}  \subset \mathbb B_3^+ \setminus \mathbb B_{2}^+
\\
|\operatorname{div}(x_{n+1}^a \nabla \phi)| \leq C x_{n+1}^a\ \mathbf 1_{\mathbb B_3^+ \setminus \mathbb B_{2}^+}.
\end{cases}
\end{equation}

\medskip

\noindent{\textbf{Step 2:}} The Carleman estimate \eqref{car1} applied to $f$ yields the following for sufficiently large $\alpha$ satisfying $\alpha\geq M(1+\|V\|_{1}^{\f{1}{2s}})$ and $0 < c \le \f{1}{5\ld}$   
\begin{align}
\notag & \alpha^2 \int_{\mb{R}^{n+1}_+ \times [0, \iy)} x_{n+1}^{a} (\sigma_{s}(t+c))^{-2 \alpha} \ f^2 \ G(X, t+c)  + \alpha \int_{\mb{R}^{n+1}_+ \times [0, \iy)} x_{n+1}^{a} (\sigma_{s}(t+c))^{1-2 \alpha}\  |\nabla f|^2 \ G(X, t+c)  \\
			\notag & \lesssim  M\int_{\mb{R}^{n+1}_+ \times [0, \iy)} \sigma_{s}^{ 1-2 \alpha}(t+c) x_{n+1}^{-a} \ \lvert \phi x_{n+1}^a U\eta_{t}+ 2 x_{n+1}^a \eta\langle\nabla U,\nabla \phi\rangle  + \eta\operatorname{div}(x_{n+1}^a \nabla \phi) U \rvert ^2 \ G(X, t+c)\\
   &+ \sigma_{s}^{-2 \alpha}(c) \left\{ - \f{c}{M} \int_{t=0} x_{n+1}^a \ |\nabla f(X,0)|^2 \ G(X,c) \ dX + \alpha M\int_{t=0} x_{n+1}^a \ |f(X, 0)|^2 \ G(X,c) \ dX\right\}\notag\\
   \notag & \lesssim  M\lambda^2\int_{\mb{R}^{n+1}_+ \times [0, \iy)} (\sigma_{s}(t+c))^{1-2 \alpha}\, G(X, t+c)\, x_{n+1}^{a} |U|^2 \mathbf 1_{[\f{1}{8\lambda}, \f{1}{4\lambda})}\\
   \notag &+M\int_{\mb{R}^{n+1}_+ \times [0, \iy)}  x_{n+1}^a \sigma_{s}^{ 1-2 \alpha}(t+c)\{|\nabla U|^2+|U|^2\}\mathbf 1_{\mathbb B_3 \setminus \mathbb B_{2}} \eta^2 G(X, t+c)\\
   &+ \sigma_{s}^{-2 \alpha}(c) \left\{ - \f{c}{M} \int_{t=0} x_{n+1}^a \ |\nabla f(X,0)|^2 \ G(X,c) \ dX + \alpha M\int_{t=0} x_{n+1}^a \ |f(X, 0)|^2 \ G(X,c) \ dX\right\}.\label{req1}
\end{align}
In the last inequality \eqref{req1} above, we also used \eqref{obs1}. It is to be noted in view of the regularity result for $U$ as in Lemma \ref{reg1}, the Carleman estimate \eqref{car1} can be applied to $f$. This  can be justified by a standard approximation argument by first considering the integrals in the region $\{x_{n+1}> \ve\}$ and then letting $\ve \to 0$.

\medskip

\noindent{\textbf{Step 3:}} Now we estimate the right hand side of the inequality \eqref{req1} for $c$ satisfying $c\leq \f{1}{8\lambda}.$ To do that let us first estimate the quantity $\sigma_{s}^{ 1-2 \alpha}(t+c)G(X, t+c)$ in $X\in \mathbb B_3^+\times [0, \f{1}{4\lambda})  \setminus \mathbb B_{2}^+\times [0, \f{1}{8\lambda}).$ Lemma~\ref{sigma} implies that $\sigma_{s}(t+c)\geq \f{t+c}{M},$ therefore, 
\begin{align}
\label{req3} \sigma_{s}^{ 1-2 \alpha}(t+c)G(X, t+c)&\leq M^{2\alpha-1}(t+c)^{1-2\alpha} \f{1}{(t+c)^{\f{n+a+1}{2}}} e^{-\f{|X|^2}{4(t+c)}}\\
&\leq M^{2\alpha-1}(t+c)^{1-2\alpha} \f{1}{(t+c)^{\f{n+a+1}{2}}}.
\label{req2}
\end{align}
Now if $t>\frac{1}{8\lambda}$ then $t+c>\frac{1}{8\lambda},$ thus \eqref{req2} implies 
\begin{align*}
\notag (\sigma_{s}(t+c))^{1-2 \alpha}G(X, t+c)\leq M^{2\alpha-1}(t+c)^{1-2\alpha-\f{n+a+1}{2}}\leq M^{2\alpha-1} (8\lambda)^{-1+2\alpha+\f{n+a+1}{2}}.
\end{align*}
In the case when $t\leq\frac{1}{8\lambda}$ we shall utilize the decay of $e^{-\f{|X|^2}{4(t+c)}}.$ Since $|X|>1,$ we have $e^{-\f{|X|^2}{4(t+c)}}\leq e^{\f{-1}{4(t+c)}}.$ Now \eqref{req3} and $t+c<\frac{1}{4}$ implies
\begin{align*}
 &\sigma_{s}^{ 1-2 \alpha}(t+c)G(X, t+c)\\
 &\lesssim  M^{2\alpha-1}(t+c)^{1-2\alpha} \f{1}{(t+c)^{\f{n+a+1}{2}}} e^{-\f{1}{4(t+c)}}\\
 &\lesssim M^{2\alpha-1}(t+c)^{1-2\alpha-\f{n+a+1}{2}} \left(2\alpha+\f{n+a+1}{2}\right)^{\big(2\alpha+\f{n+a+1}{2}\big)}(2(t+c))^{\big(2\alpha+\f{n+a+1}{2}-1\big)}\,\,(\text{using}\,\,e^x>\f{x^k}{k!}>\f{x^k}{k^k})\\
 &\lesssim  M^{2\alpha-1} \left(2\alpha+\f{n+a+1}{2}\right)^{\big(2\alpha+\f{n+a+1}{2}\big)}.
\end{align*}
Therefore, in any case we have \begin{align}\label{step3}\sigma_{s}^{ 1-2 \alpha}(t+c)G(X, t+c)\lesssim M^{2\alpha-1} \lambda^{2\alpha+\f{n+a+1}{2}}.\end{align}

\noindent{\textbf{Step 4:}} Incorporating \eqref{step3} in \eqref{req1} yields
\begin{align}
\notag & \alpha^2 \int_{\mb{R}^{n+1}_+ \times [0, \iy)} x_{n+1}^{a} (\sigma_{s}(t+c))^{-2 \alpha} \ f^2 \ G(X, t+c) + \alpha \int_{\mb{R}^{n+1}_+ \times [0, \iy)} x_{n+1}^{a} (\sigma_{s}(t+c))^{1-2 \alpha}\  |\nabla f|^2 \ G(X, t+c)  \\ 
\notag & \lesssim   M^{2\alpha+\f{n+a+1}{2}} \alpha^{2\alpha+\f{n+a+1}{2}}\int_{[0, \f{1}{4\lambda})}\int_{\mathbb B_{3}^+} x_{n+1}^{a} \{|\nabla U|^2+|U|^2\}\\
   &+ \sigma_{s}^{-2 \alpha}(c) \left\{ - \f{c}{M} \int_{t=0} x_{n+1}^a \ |\nabla f(X,0)|^2 \ G(X,c) \ dX + \alpha M\int_{t=0} x_{n+1}^a \ |f(X, 0)|^2 \ G(X,c) \ dX\right\}\notag\\
   \notag & \lesssim  M^{2\alpha+\f{n+a+1}{2}} \alpha^{2\alpha+\f{n+a+1}{2}} (1+\|V\|_{1}) \int_{\mathbb Q_{4}^{+}} x_{n+1}^{a} U^2(X, t)\ \text{(using Lemma \ref{reg1})}\\
   &+ \sigma_{s}^{-2 \alpha}(c) \left\{ - \f{c}{M} \int_{t=0} x_{n+1}^a \ |\nabla f(X,0)|^2 \ G(X,c) \ dX + \alpha M\int_{t=0} x_{n+1}^a \ |f(X, 0)|^2 \ G(X,c) \ dX\right\}.\label{step4}
\end{align}

\noindent{\textbf{Step 5:}} Since $\phi=1$ on $\mathbb{B}_{2}$ and $\eta=1$ on $[0, \f{1}{8\lambda})$ and for small enough $\rho<\f{1}{2},$ which will be chosen later, we obtain  
\begin{align}
\notag & \alpha^2 \int_{\mb{R}^{n+1}_+ \times [0, \iy)} x_{n+1}^{a} \sigma_{s}^{-2 \alpha}(t+c) \ f^2 \ G(X, t+c)\\
&\geq  \notag \alpha^2 \int_{[0, \f{1}{8\lambda})}\int_{\mathbb{B}_{2}}  x_{n+1}^{a} \sigma_{s}^{-2 \alpha}(t+c) \ U^2 \ G(X, t+c)\\
 &\geq \notag \alpha^2 \int_{[0, \f{\rho^2}{4\lambda})}\int_{\mathbb{B}_{2\rho}^{+}}  x_{n+1}^{a} \sigma_{s}^{-2 \alpha}(t+c) \ U^2 (t+c)^{-\f{n+a+1}{2}} e^{-\f{|X|^2}{4(t+c)}}\\
 &\geq \alpha^2 \int_{[0, \f{\rho^2}{4\lambda})} (t+c)^{-2\alpha} (t+c)^{-\f{n+a+1}{2}} e^{-\f{\rho^2}{(t+c)}}\int_{\mathbb{B}_{2\rho}^{+}}  x_{n+1}^{a}U^2.\label{step5-1}
\end{align}

At this point let us assume $\alpha>M(\log(M(1+\|V\|_{1})\Theta_{\rho})+\|V\|_{1}^{1/2s})$ and $0<t\leq \f{\rho^2}{4\lambda}$ and $0\leq c\leq \f{\rho^2}{8\lambda}.$ We then employ the monotonicity result, Corollary~\ref{mont-cor}, to \eqref{step5-1} to obtain
\begin{align}
\notag & \alpha^2 \int_{\mb{R}^{n+1}_+ \times [0, \iy)} x_{n+1}^{a} \sigma_{s}^{-2 \alpha}(t+c) \ f^2 \ G(X, t+c)\\
 &\geq \alpha^2 \int_{[c, c+\f{\rho^2}{4\lambda})} t^{-2\alpha} t^{-\f{n+a+1}{2}} e^{-\f{\rho^2}{t}} \f{1}{M} e^{-\|V\|_{1}^{\f{1}{2s}}}\int_{\mathbb{B}_{\rho}^{+}}  x_{n+1}^{a}U^2(X, 0)\notag\\
& \geq \alpha^2 \f{1}{M} e^{-\alpha}\int_{[\f{\rho^2}{8\lambda}, \f{\rho^2}{4\lambda})} t^{-2\alpha} t^{-\f{n+a+1}{2}} e^{-\f{\rho^2}{t}} \int_{\mathbb{B}_{\rho}^{+}}  x_{n+1}^{a}U^2(X, 0)\notag\\
&\geq \alpha^2 \f{1}{M} e^{-\alpha}\left(\f{\rho^2}{4\lambda}\right)^{-\big(2\alpha+\f{n+a+1}{2}\big)} e^{-8\lambda} \left(\frac{\rho^2}{4\lambda}\right)
\int_{\mathbb{B}_{\rho}^{+}}  x_{n+1}^{a}U^2(X, 0)\notag\\
&\geq \frac{\delta^2 4^{2\alpha+\f{n+a+1}{2}}\lambda^{2\alpha+\f{n+a+1}{2}+1}}{8 M}(e^{5/\delta^2}\rho^2)^{-2\alpha}\rho^{2-\f{n+a+1}{2}}\int_{\mathbb{B}_{\rho}^{+}}  x_{n+1}^{a}U^2(X, 0)\notag\\
&=\frac{\delta^2 4^{2\alpha+\f{n+a+1}{2}}\lambda^{2\alpha+\f{n+a+1}{2}+1}}{8 M\Theta_{\rho}}(e^{5/\delta^2}\rho^2)^{-2\alpha}\rho^{-\f{n+a+1}{2}}\int_{\mathbb{Q}_{4}^{+}}  x_{n+1}^{a}U^2(X, t).
 \label{step5}
\end{align}

\noindent{\textbf{Step 6:}} The inequalities \eqref{step4} and \eqref{step5} together imply that 
\begin{align}
\notag &\frac{\delta^2 4^{2\alpha+\f{n+a+1}{2}}\lambda^{2\alpha+\f{n+a+1}{2}+1}}{8 M\Theta_{\rho}}(e^{5/\delta^2}\rho^2)^{-2\alpha}\rho^{-\f{n+a+1}{2}}\int_{\mathbb{Q}_{4}^{+}}  x_{n+1}^{a}U^2(X, t)\\
&\notag  \lesssim  M^{2\alpha+\f{n+a+1}{2}} \alpha^{2\alpha+\f{n+a+1}{2}} (1+\|V\|_{1}) \int_{\mathbb Q_{4}^{+}} x_{n+1}^{a} U^2(X, t)\\
   &+ \sigma_{s}^{-2 \alpha}(c) \left\{ - \f{c}{M} \int_{t=0} x_{n+1}^a \ |\nabla f(X,0)|^2 \ G(X,c) \ dX + \alpha M\int_{t=0} x_{n+1}^a \ |f(X, 0)|^2 \ G(X,c) \ dX\right\}.\label{step6}
\end{align}
To absorb the first term in the right hand side into the left, we need
\begin{align}
\notag &\frac{\delta^2 4^{2\alpha+\f{n+a+1}{2}}\lambda^{2\alpha+\f{n+a+1}{2}+1}}{8 M\Theta_{\rho}}(e^{5/\delta^2}\rho^2)^{-2\alpha}\rho^{-\f{n+a+1}{2}}\geq 8M^{2\alpha+\f{n+a+1}{2}} \alpha^{2\alpha+\f{n+a+1}{2}} (1+\|V\|_{1})
\end{align}
In view of the fact that $\alpha^{2s} \sim \lambda^{2s} \geq (1+\|V\|_{1})$ the above will be guaranteed if we choose $\rho$ such that
\begin{align}
\notag& \frac{\delta^2 4^{2\alpha+\f{n+a+1}{2}}\lambda^{2\alpha+\f{n+a+1}{2}+1}}{8 M\Theta_{\rho}}(e^{5/\delta^2}\rho^2)^{-2\alpha}\rho^{-\f{n+a+1}{2}}\geq 8 M^{2\alpha+\f{n+a+1}{2}} \lambda^{2\alpha+\f{n+a+1}{2}} \lambda^{2s}\\
\notag&\impliedby  \delta^24^{2\alpha+\f{n+a+1}{2}}(e^{5/\delta^2}M\rho^2)^{-2\alpha}\geq 64 M^{\f{n+a+1}{2}+1}  \lambda^{2s}\Theta_{\rho}\\
\notag&\impliedby (e^{5/\delta^2}M\rho^2)^{-2\alpha}\geq M^{\f{n+a+1}{2}+1}  \Theta_{\rho}\,  \  \ (\text{as}  \  \ \delta^2 4^{2\alpha+\f{n+a+1}{2}}\geq 64 \lambda^{2s})\\
\notag&\impliedby (e^{5/\delta^2}M\rho^2)^{-2\alpha}\geq M^{\f{n+a+1}{2}} e^{\alpha/M} \,\,\,\, \,\, (\text{using}\,\,\, e^{\alpha/M}> M\Theta_{\rho}\,\,\text{as}\,\,\alpha>M(\log(M(1+\|V\|_{1})\Theta_{\rho}))\\
\notag&\iff (e^{5/\delta^2}M e^{{1}/{2M}}\rho^2)^{-2\alpha}\geq M^{\f{n+a+1}{2}},
\end{align}
the above inequality will be true provided $e^{5/\delta^2}M e^{{1}/{2M}}\rho^2\leq \f{1}{16}.$ Therefore, under the above mentioned condition on $\rho$ and for $\alpha=\alpha_{0}=M(\log(M(1+\|V\|_{1})\Theta_{\rho})+\|V\|_{1}^{1/2s})$, \eqref{step6} implies 
\begin{align}
\label{help}&\f{c}{M} \int x_{n+1}^a \ |\nabla f(X,0)|^2 \ G(X,c) \ dX\leq  \alpha_{0} M\int x_{n+1}^a \ |f(X, 0)|^2 \ G(X,c) \ dX\\
\implies& 2c\int  x_{n+1}^a \ |\nabla f(X,0)|^2 e^{-\f{|X|^2}{4c}}+\f{n+a+1}{2}\int x_{n+1}^a \ |f(X, 0)|^2 e^{-\f{|X|^2}{4c}} dX\notag \\
&\leq M^3 \alpha_{0} \int_{t=0} x_{n+1}^a \ |f(X, 0)|^2 e^{-\f{|X|^2}{4c}} dX.\label{stepfinal}
\end{align}
At this point \eqref{stepfinal} combined with Lemma~\ref{do} allow us to infer that
\begin{align}
\label{adb}
	\int_{\mathbb B_{2r}^+} U^2(X, 0) x_{n+1}^a dX \leq N \int_{\mathbb B_{r}^+} U^2(X, 0) x_{n+1}^a dX
\end{align}
holds for all $0\leq r<\f{1}{2},$ where $N=\exp\{M^4(\log(M(1+\|V\|_{1})\Theta_{\rho})+\|V\|_{1}^{1/2s})\}$ and 
$\Theta_{\rho}=\frac{\int_{\mathbb Q_4^{+}} U(X,t)^2 x_{n+1}^adXdt }{\rho^2 \int_{\mathbb B_{\rho}^{+}} U(X,0)^2 x_{n+1}^a dX}$ with $e^{5/\delta^2}M e^{{1}/{2M}}\rho^2\leq \f{1}{16}.$ We note that here onwards the resulting quantity $\rho<1$ will be fixed for the rest of the article. Renaming $M^4$ by $M$ again we complete the proof.
\end{proof}
 In order to obtain a dependence of the doubling constant in Theorem \ref{db1} on $\Theta$ instead of $\Theta_\rho$, we argue  by a covering argument and by using  the following two ball one-cylinder inequality. 
\begin{theorem}
\label{bc}
 Under the assumptions Theorem~\ref{db1} and for $\rho, M, N$ defined in Theorem~\ref{db1}, we have the following two-ball one-cylinder inequality for all $0< r< \rho$
 \begin{align}
 \label{ball-cylinder}
\int_{\mathbb B_{\rho}^{+}} U^2(X, 0) x_{n+1}^a dX&\leq e^{M_{1}\f{\log_{2}(\f{2\rho}{r})}{1+M\log_{2}(2\rho/r)}}\left(\int_{\mathbb B_{r}^+} U^2(X, 0) x_{n+1}^a dX\right)^{\f{1}{{1+M\log_{2}(2\rho/r)}}}\notag \\
&\left(M \int_{\mathbb Q_{4}^+} U(X,t)^2 x_{n+1}^a dXdt \right)^{\f{M\log_{2}(2\rho/r)}{{1+M\log_{2}(2\rho/r)}}},  
\end{align}
 where $M_{1}=M(\log({M}(1+\|V\|_{1})))+ M\|V\|_{1}^{1/2s}.$
\end{theorem}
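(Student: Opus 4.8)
The plan is to iterate the space-like doubling inequality of Theorem~\ref{db1} from the radius $r$ up to the fixed radius $\rho$, and then to exploit the self-referential feature of that inequality: its doubling constant $N$ carries the factor $\Theta_\rho^{\,M}$, and $\Theta_\rho=\frac{\int_{\mathbb Q_4^+}U(X,t)^2x_{n+1}^a\,dX\,dt}{\rho^2\int_{\mathbb B_\rho^+}U(X,0)^2x_{n+1}^a\,dX}$ has in its denominator precisely the quantity we want to estimate. Abbreviate $I(r)=\int_{\mathbb B_r^+}U(X,0)^2x_{n+1}^a\,dX$, $H=\int_{\mathbb Q_4^+}U(X,t)^2x_{n+1}^a\,dX\,dt$ and $L=\log_2(2\rho/r)\ge 1$ (using $r\le\rho$). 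Since \eqref{ball-cylinder} is homogeneous of degree two in $U$, I would first normalize so that $I(1)=1$; then $I(r)\le I(\rho)\le 1$, which makes the logarithmic bookkeeping below unambiguous.

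Next I would pick the smallest integer $k$ with $2^k r\ge\rho$, so that $k$ is comparable to $L$ and (after possibly shrinking the universal $\rho$) every intermediate radius $2^j r$, $0\le j\le k$, lies below $2\rho<1/2$. Thus Theorem~\ref{db1} applies with radius $2^j r$ for $0\le j\le k-1$, with one and the same constant $N=\exp\{M\log(M(1+\|V\|_1)\Theta_\rho)+M\|V\|_1^{1/2s}\}$ (independent of the radius), giving $I(2^{j+1}r)\le N\,I(2^j r)$; by monotonicity of $r\mapsto I(r)$ this yields
\[
I(\rho)\ \le\ I(2^k r)\ \le\ N^{\,k}\,I(r).
\]
Writing $N=C_0\,\Theta_\rho^{\,M}$ with $C_0=(M(1+\|V\|_1))^{M}e^{M\|V\|_1^{1/2s}}$ and substituting $\Theta_\rho=H/(\rho^2 I(\rho))$ gives $I(\rho)^{1+kM}\le C_0^{\,k}\,\rho^{-2kM}\,H^{kM}\,I(r)$, hence
\[
I(\rho)\ \le\ C_0^{\,k/(1+kM)}\,\rho^{-\frac{2kM}{1+kM}}\,H^{\frac{kM}{1+kM}}\,I(r)^{\frac{1}{1+kM}}.
\]
Since $\rho$ is a fixed universal constant and $\tfrac{2kM}{1+kM}<2$, the factor $\rho^{-2kM/(1+kM)}$ is bounded by a universal constant, which I would absorb (together with other universal constants) into the "$M$" multiplying $H$; and $C_0^{\,k/(1+kM)}=\exp\{M_1\,\tfrac{k}{1+kM}\}$ with $M_1=M\log(M(1+\|V\|_1))+M\|V\|_1^{1/2s}$. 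Replacing $k$ by $L$ in the three exponents, which is legitimate because $x\mapsto Mx/(1+Mx)$ is increasing and $k\asymp L$, at the cost of absorbing universal constants, produces exactly \eqref{ball-cylinder}.

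The one point needing real care is this last exponent bookkeeping, in particular the $H$-exponent, where the sign of $\log H$ matters once $k$ is replaced by $L$: the cheapest way around it is to note that the backward-in-time propagation of Lemma~\ref{mont} forces $\log H\gtrsim -\|V\|_1^{1/2s}$, so the discrepancy $\big(\tfrac{kM}{1+kM}-\tfrac{LM}{1+LM}\big)\log H$ is $O(\|V\|_1^{1/2s})$ and, because $r\le\rho$ makes $L\ge 1$ and hence $M_1\tfrac{L}{1+ML}\gtrsim\|V\|_1^{1/2s}$, it can be absorbed into the prefactor $e^{M_1 L/(1+ML)}$ upon enlarging $M$. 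One must also check that $\log(M(1+\|V\|_1)\Theta_\rho)$ is meaningful, again via the lower bound on $\Theta_\rho$ coming from Lemma~\ref{mont}; if it happens to be nonpositive the conclusion is immediate from $I(\rho)\le I(2^k r)$. Apart from this essentially combinatorial step there is no analytic content beyond Theorem~\ref{db1} and Lemma~\ref{mont}, and I expect the exponent matching to be the main, though routine, obstacle. Finally one undoes the normalization by homogeneity.
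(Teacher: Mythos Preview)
Your approach is essentially the same as the paper's: iterate the doubling inequality \eqref{dbthin} to get $I(\rho)\le N^{k}I(r)$, then exploit that $N$ contains $\Theta_\rho^{M}$ with $I(\rho)$ in its denominator and solve for $I(\rho)$. The paper, however, orders the steps more cleanly and thereby avoids the exponent-matching complication you flag at the end. Specifically, it first replaces $N^{k}$ by $N^{L}$ with $L=\log_2(2\rho/r)$ \emph{before} substituting $\Theta_\rho$; this is legitimate simply because $k\le L$ and $N\ge 1$ (the latter being built into the setup, since $\log(M(1+\|V\|_1)\Theta_\rho)+\|V\|_1^{1/2s}$ is the threshold for $\alpha$ in the Carleman argument and is in particular nonnegative). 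Only then does it substitute $\Theta_\rho=H/(\rho^2 I(\rho))$ and take the $(1+ML)$-th root, which produces the stated exponents exactly, with no residual $k$ versus $L$ discrepancy and no need to appeal to Lemma~\ref{mont} for a lower bound on $\log H$. Your normalization $I(1)=1$ is also unnecessary: the paper works directly with the homogeneous inequality. In short, your argument is correct, but the detour through the $\log H$ sign issue and the normalization can be dropped by swapping $k\leadsto L$ one step earlier.
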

\begin{proof}
Let $0< r< \rho,$ and $k\in \mathbb N $ be such that $\rho\leq 2^k r<2\rho.$ Then iterating \eqref{adb} repeatedly we get
\begin{align*}
\int_{\mathbb B_{\rho}^{+}} U^2(X, 0) x_{n+1}^a dX&\leq \int_{\mathbb B_{2^{k}r}^{+}} U^2(X, 0) x_{n+1}^a dX \leq N^{k} \int_{\mathbb B_{r}^+} U^2(X, 0) x_{n+1}^a dX\\
& \leq N^{\log_{2}(\f{2\rho}{r})} \int_{\mathbb B_{r}^{+}} U^2(X, 0) x_{n+1}^a dX.
\end{align*}
Let us rewrite $N$ as $N=e^{M(\log({M}(1+\|V\|_{1})))} e^{M\log(M \Theta_\rho)}e^{M\|V\|_{1}^{1/2s}}=e^{M_{1}+M\log(M \Theta_{\rho})},$ then
\begin{align}
\notag \int_{\mathbb B_{\rho}^+} U^2(X, 0) x_{n+1}^a dX  &\leq e^{M_{1}\log_{2}(\f{2\rho}{r})} (M \Theta_\rho)^{M\log_{2}(\f{2\rho}{r})} \int_{\mathbb B_{r}^+} U^2(X, 0) x_{n+1}^a dX\\
&=e^{M_{1}\log_{2}(\f{2\rho}{r})} \left(M \frac{\int_{\mathbb Q_4^{+}} U(X,t)^2 x_{n+1}^adXdt }{\rho^2 \int_{\mathbb B_{\rho}^{+}} U(X,0)^2 x_{n+1}^a dX}\right)^{M\log_{2}(\f{2\rho}{r})} \int_{\mathbb B_{r}^{+}} U^2(X, 0) x_{n+1}^a dX.\notag \end{align}
Since $\rho<1$ is also a fixed universal constant, we have
\begin{align*}
\int_{\mathbb B_{\rho}^{+}} U^2(X, 0) x_{n+1}^a dX&\leq e^{M_{1}\f{\log_{2}(\f{2\rho}{r})}{1+M\log_{2}(2\rho/r)}}\left(\int_{\mathbb B_{r}^+} U^2(X, 0) x_{n+1}^a dX\right)^{\f{1}{{1+M\log_{2}(2\rho/r)}}}\\
&\left(M \int_{\mathbb Q_4^{+}} U(X,t)^2 x_{n+1}^a dXdt \right)^{\f{M\log_{2}(2\rho/r)}{{1+M\log_{2}(2\rho/r)}}},  
\end{align*}
possibly for a different universal constant $M$ depending on $\rho.$  
\end{proof}
The following lemma allows us to replace $\Theta_\rho$ by $\Theta$ in the doubling inequality \eqref{dbthin}. This in turn relies on a covering argument inspired by similar ideas in \cite{EFV_2006}. However,  such an argument requires  certain  important modification in our setting   since the operator is not translation invariant in the $x_{n+1}$ direction. \begin{lemma}\label{theta}
Under the assumptions of Theorem~\ref{db1} we have a universal constant $N_{0},$ possibly depending on $\rho,$ and a constant $\tau\in (0, 1)$ such that
\begin{align}
\label{tt}\Theta_{\rho}^{\tau}\leq N^{2-\tau}_{0} e^{M_{1}(1-\tau)}  \rho^{-2\tau} \Theta,
\end{align}
where $M_1$ is defined as in Theorem~\ref{bc}.
\end{lemma}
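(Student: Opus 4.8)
The plan is to prove the inequality by a covering argument that compares the denominator of $\Theta_\rho$, namely $\rho^2\int_{\mathbb B_\rho^+}U(X,0)^2x_{n+1}^a\,dX$, with the denominator of $\Theta$, namely $\int_{\mathbb B_1^+}U(X,0)^2x_{n+1}^a\,dX$. Cover the fixed ball $\mathbb B_1$ by finitely many balls $\mathbb B_\rho(X_j)$, $j=1,\dots,L$, with $L=L(n,\rho)$ and with centers $X_j$ chosen so that each $\mathbb B_\rho(X_j)$ is contained in $\mathbb B_2$ and so that the half-balls $\mathbb B_\rho^+(X_j)$ cover $\mathbb B_1^+$. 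Since $\mathscr L_a$ is not translation invariant in the $x_{n+1}$-direction, one cannot simply translate the doubling inequality \eqref{dbthin} to every $X_j$; instead, for centers $X_j$ with $(X_j)_{n+1}$ bounded away from $0$ the weight $x_{n+1}^a$ is comparable to a constant on $\mathbb B_\rho(X_j)$ and one uses the interior two-ball one-cylinder inequality for the uniformly parabolic (non-degenerate) operator there, while for centers on or near $\{x_{n+1}=0\}$ one uses exactly the inequality \eqref{ball-cylinder} of Theorem \ref{bc} after a translation in the $x$-variables only, which is permitted because $\mathscr L_a$ \emph{is} invariant under translations in $x$. In either case, applying the relevant two-ball one-cylinder estimate at scale $r$ comparable to a fixed fraction of $\rho$ gives, for each $j$,
\begin{align*}
\int_{\mathbb B_\rho^+(X_j)}U(X,0)^2x_{n+1}^a\,dX\ \le\ N_0^{2}\,e^{M_1}\Big(\int_{\mathbb B_\rho^+}U(X,0)^2x_{n+1}^a\,dX\Big)^{1-\tau}\Big(\int_{\mathbb Q_4^+}U(X,t)^2x_{n+1}^a\,dX\,dt\Big)^{\tau},
\end{align*}
for a suitable $\tau\in(0,1)$ and a constant $N_0=N_0(n,\rho)$, where I absorb the fixed finite constant $L$ into $N_0$.

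Next, summing over $j=1,\dots,L$ and using that the $\mathbb B_\rho^+(X_j)$ cover $\mathbb B_1^+$, I get
\begin{align*}
\int_{\mathbb B_1^+}U(X,0)^2x_{n+1}^a\,dX\ \le\ N_0^{2}\,e^{M_1}\Big(\int_{\mathbb B_\rho^+}U(X,0)^2x_{n+1}^a\,dX\Big)^{1-\tau}\Big(\int_{\mathbb Q_4^+}U(X,t)^2x_{n+1}^a\,dX\,dt\Big)^{\tau}.
\end{align*}
Now divide both sides by $\int_{\mathbb B_1^+}U(X,0)^2x_{n+1}^a\,dX$ raised to the appropriate power, or equivalently rewrite this as a lower bound on $\int_{\mathbb B_\rho^+}U(X,0)^2x_{n+1}^a\,dX$: raising to the power $1/(1-\tau)$,
\begin{align*}
\int_{\mathbb B_\rho^+}U(X,0)^2x_{n+1}^a\,dX\ \ge\ N_0^{-2/(1-\tau)}e^{-M_1/(1-\tau)}\,\frac{\big(\int_{\mathbb B_1^+}U(X,0)^2x_{n+1}^a\,dX\big)^{1/(1-\tau)}}{\big(\int_{\mathbb Q_4^+}U^2x_{n+1}^a\big)^{\tau/(1-\tau)}}.
\end{align*}
Multiplying by $\rho^2$ and taking reciprocals produces an upper bound for $\Theta_\rho$, and then raising to the power $\tau$ and simplifying the exponents (using $\int_{\mathbb Q_4^+}\le\int_{\mathbb Q_5^+}$ and the definition of $\Theta$) yields exactly \eqref{tt} with the stated constants, after renaming $N_0$.

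The main obstacle I anticipate is the bookkeeping at the degenerate boundary: one must verify that the two-ball one-cylinder inequality \eqref{ball-cylinder} can indeed be applied on translated half-balls $\mathbb B_\rho^+(x_j,0)$ centered on $\{x_{n+1}=0\}$ with the cylinder still sitting inside the region $\mathbb Q_4^+$ (or $\mathbb Q_5^+$) where $U$ solves \eqref{exprob}, and that for interior centers the passage from the degenerate weight to a uniformly elliptic/parabolic estimate does not degrade the exponent $\tau$ or introduce $\rho$-dependence worse than the $\rho^{-2\tau}$ already present — this is where the choice of $r$ comparable to a fixed multiple of $\rho$ and the precise form of Theorem \ref{bc} must be tracked carefully. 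Once the covering is set up with these two regimes handled separately, the algebra of combining the exponents is routine.
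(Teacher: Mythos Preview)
Your overall architecture (covering, splitting into ``boundary'' centers where you translate only in $x$ and use Theorem~\ref{bc}, and ``interior'' centers where the weight is uniformly comparable to a constant and you invoke the Escauriaza--Fern\'andez--Vessella estimate) is exactly the mechanism the paper uses. However, there is a genuine gap in the step where you claim that a single application of the two-ball one-cylinder inequality at each $X_j$ yields
\[
\int_{\mathbb B_\rho^+(X_j)}U^2(X,0)\,x_{n+1}^a\,dX
\ \le\ N_0^{2}\,e^{M_1}\Big(\int_{\mathbb B_\rho^+}U^2(X,0)\,x_{n+1}^a\,dX\Big)^{1-\tau}\Big(\int_{\mathbb Q_4^+}U^2\,x_{n+1}^a\Big)^{\tau}.
\]
The inequality \eqref{ball-cylinder} applied to the translate centered at $X_j$ bounds $\int_{\mathbb B_\rho^+(X_j)}$ in terms of $\int_{\mathbb B_r^+(X_j)}$, i.e.\ the \emph{same} center. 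When $|X_j|$ is of order $1$ (as it must be for most balls in a covering of $\mathbb B_1^+$ by $\rho$-balls), the inner ball $\mathbb B_r^+(X_j)$ is nowhere near $\mathbb B_\rho^+$ at the origin, so there is no way to replace $\int_{\mathbb B_r^+(X_j)}$ by $\int_{\mathbb B_\rho^+}$ in one stroke. Summing over $j$ therefore does not give the displayed bound for $\int_{\mathbb B_1^+}$.

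The paper fixes precisely this by propagating incrementally: one covers $\mathbb B_{101\rho/100}^+$ (not $\mathbb B_1^+$) by balls whose centers lie in $\mathbb B_{\rho/2}$ (for the boundary family $\mathcal F_1$) or close enough that the inner half-ball is contained in $\mathbb B_\rho^+$ (for the interior family $\mathcal F_2$). This containment is what lets you replace $\int_{\mathbb B_{\rho/2}^+(Z_i)}$ by $\int_{\mathbb B_\rho^+}$. One then feeds \eqref{cv1} back in to obtain an estimate of the form
\[
\int_{\mathbb B_{101\rho/100}^+}U^2(X,0)\,x_{n+1}^a\,dX
\ \lesssim\ e^{M_1(1-\tilde\beta)}\Big(\int_{\mathbb Q_5^+}U^2\,x_{n+1}^a\Big)^{1-\tilde\beta}\Big(\int_{\mathbb B_\rho^+}U^2(X,0)\,x_{n+1}^a\Big)^{\tilde\beta}
\]
with $\tilde\beta=\beta^2$, and iterates this finitely many times to reach $\mathbb B_1^+$, the exponent getting squared at each step. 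Your ``one-shot'' covering collapses exactly this chain-of-balls argument, and that is where the proof breaks.
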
 
\begin{figure}
\begin{tikzpicture}[scale=5]

\begin{scope}[xshift=2cm] 

\draw[->] (-1.5,0) -- (1.5,0) node[below] {$ x$};
\draw[->] (0,-.2) -- (0,1.5) node[left] {$ x_{n+1}$};








\draw [->](.90,.25) -- (1.2,.40) node [right] {$\mathcal F_1$};

\draw [->](.32,.756) -- (.5,1) node [right] {$\mathcal F_2$};
\draw [->](0,.78) -- (.5,1) ;
\draw [->](.5,.7) -- (.5,1) ;
\draw  [thick] (-1,0) arc (180:0:10mm);
\draw [thick] (-1,0) arc (180:0:3.5mm);
\draw[thick] (-.35,0) arc (180:0:3.5mm);
\draw [thick] (.3,0) arc (180:0:3.5mm);

\draw (-0.5, 1.2) node[below right] {$\mathbb B_{101\rho/100}^{+}$}; 

\draw[dashed] (-1,0.09) -- (1,0.09) ;



\draw (-1.1,0.09) node[left] {\small{$x_{n+1}=c_1\rho/100$}};


\fill[gray!30,nearly transparent] (-1, 0)-- (-1,0.09) -- (1,0.09)--(1, 0) -- cycle;

\draw [dashed] (-.35,.30) circle[radius=2mm];
\draw [dashed] (.25,.57) circle[radius=2mm];
\draw  [dashed] (0,.30) circle[radius=2mm];
\draw [dashed] (.3,.3) circle[radius=2mm];
\draw [dashed] (-.72,.30) circle[radius=2mm];
\draw [dashed] (-.55,.52) circle[radius=2mm];
\draw [dashed] (-.3,.57) circle[radius=2mm];
\draw  [dashed] (0,.58) circle[radius=2mm];
\draw [dashed] (.35,.69) circle[radius=2mm];
\draw [dashed] (.60,.52) circle[radius=2mm];
\draw [dashed] (.62,.3) circle[radius=2mm];












\end{scope}

\end{tikzpicture}
\end{figure}

\begin{proof}
We employ a covering argument. Applying the inequality \eqref{ball-cylinder} to $r=\f{\rho}{2}$ we get 
\begin{align}
\label{cv1}
\int_{\mathbb B_{\rho}^{+}} U^2(X, 0) x_{n+1}^a dX&\leq e^{M_{1}(1-\beta)}\left(\int_{\mathbb B_{\rho/2}^+} U^2(X, 0) x_{n+1}^a dX\right)^{\beta}\left(M \int_{\mathbb Q_4^{+}} U(X,t)^2 x_{n+1}^a dXdt \right)^{1-\beta},  
\end{align}
where $\beta:=\frac{1}{1+M\log 4}.$ Our first step is to obtain a similar estimate as in \eqref{cv1} with  $\mathbb{B}_{101\rho/100}^{+}$  possibly for a different choice of $\beta\in (0, 1)$. We sketch the idea. A simple covering argument yields that there are two finite families of balls $\mathcal F_{1}, \mathcal F_{2}$ such that
$$\mathbb{B}_{101\rho/100}^{+}\subset \bigcup\limits_{i\in \mathcal{F}_1}\mathbb{B}_{\rho}^{+}(Z_i)\bigcup\limits_{i\in \mathcal{F}_2 }\mathbb{B}_{c_1\rho}^{+}(Y^i),$$
where
\begin{align}\notag&\mathcal{F}_1:=\{Z_i: 1\leq i\leq n_1; Z_{i}=(z_{i}, 0)\in \mathbb{B}_{\rho/2}\},\\
\notag &\mathcal{F}_2:=\{Y^i=(y^{i}, y^i_{n+1}): 1\leq i \leq n_2, Y^{i}\in \mathbb{B}_{c_{2}\rho}^{+}\},
\end{align}
and $n_1, n_2$ are dimensional constants, $c_i's$ are fixed positive constants such that the distance of the  cylinders $\mathbb{Q}_{4c_{1}\rho}^{+}(Y^i,0)$  corresponding to  the points $\{Y_i\}$ in the collection $\mathcal F_2$ are always above the   line $\{x_{n+1}=\frac{c_{1}\rho}{100}\}$ and $c_2$ is close to $1$ such that \begin{equation} \mathbb B_{c_1\rho/2} (Y^i) \subset \mathbb B_{\rho}^+.\end{equation} This in turn can be ensured by choosing $c_1$ in a way such that  $1-c_2 > \frac{c_1}{2}$.  For each $Z_{i}=(z_{i}, 0)\in \mathcal{F}_1,$ since $U(X+(z_{i}, 0), t)$ is again a solution to \eqref{exprob}, applying \eqref{cv1} to this translated solution we get
\begin{align}
\label{cv2}
\int_{\mathbb B_{\rho}^{+}(Z_{i})} U^2(X, 0) x_{n+1}^a dX&\leq e^{M_{1}(1-\beta_{1})}\left(\int_{\mathbb B_{\rho/2}^+(Z_{i})} U^2(X, 0) x_{n+1}^a dX\right)^{\beta}\left(M \int_{\mathbb Q_5^{+}} U(X,t)^2 x_{n+1}^a dXdt \right)^{1-\beta}.
\end{align}
 Also, we have replaced $\mathbb Q_4^{+}$ by $\mathbb Q_5^{+}$ to accommodate the fact that $\mathbb Q_4^{+}+Z_{i}\subset  \mathbb Q_5^{+}.$ Now summing \eqref{cv2} for all $Z_{i}\in \mathcal{F}_{1}$ and considering $\mathbb{B}_{\rho/2}^{+}(Z_{i})\subset \mathbb B_{\rho}^{+}$ we obtain 
\begin{align}
\notag&\sum_{i=1}^{n_1}\int_{\mathbb B_{\rho}^{+}(Z_{i})} U^2(X, 0) x_{n+1}^a dX\\
\notag&\leq e^{M_{1}(1-\beta)}\left(M \int_{\mathbb Q_5^{+}} U(X,t)^2 x_{n+1}^a dXdt \right)^{1-\beta} \sum_{i=1}^{n_{1}}\left(\int_{\mathbb B_{\rho/2}^+(Z_{i})} U^2(X, 0) x_{n+1}^a dX\right)^{\beta}\\
&\notag \leq e^{M_{1}(1-\beta)} n_{1} \left(M \int_{\mathbb Q_5^{+}} U(X,t)^2 x_{n+1}^a dXdt \right)^{1-\beta} \left(\int_{\mathbb B_{\rho}^+} U^2(X, 0) x_{n+1}^a dX\right)^{\beta}\\
&\notag \leq e^{M_{1}(1-\beta+\beta(1-\beta))} n_{1} \left(M \int_{\mathbb Q_5^{+}} U(X,t)^2 x_{n+1}^a dXdt \right)^{1-\beta+\beta(1-\beta)}\left(\int_{\mathbb B_{\rho/2}^+} U^2(X, 0) x_{n+1}^a dX\right)^{\beta^2}\\
&= e^{M_{1}(1-\tilde{\beta})} n_{1} \left(M \int_{\mathbb Q_5^{+}} U(X,t)^2 x_{n+1}^a dXdt \right)^{1-\tilde{\beta}}\left(\int_{\mathbb B_{\rho}^+} U^2(X, 0) x_{n+1}^a dX\right)^{\tilde{\beta}},\label{cv3}
\end{align}
where $\tilde{\beta}=\beta^2$ and we have used \eqref{cv1} in the penultimate inequality above.\\

For the balls in $\mathcal{F}_2$ we can safely apply the rescaled local estimates from \cite{EFV_2006} to obtain
\begin{align}
\label{cv4}
\int_{\mathbb B_{c_1\rho}^{+}(Y^{i})} U^2(X, 0) dX&\leq \left(\int_{\mathbb B_{c_{1}\rho/2}^+(Y^{i})} U^2(X, 0)  dX\right)^{\beta}\left(M \int_{\mathbb Q_{4c_1\rho}^{+}(Y^i,0)} U(X,t)^2 dXdt \right)^{1-\beta}
\end{align}
for the same constant $\beta$ as in \eqref{cv1}. The reader should note that in order to choose the same $\beta$ in \eqref{cv1} and \eqref{cv4}, we have to work with the largest of the two constants appearing in \eqref{adb} and its counterpart in \cite{EFV_2006}. Now by construction we observe that $x_{n+1}\sim \rho$ for all points in the domains of integration in the above inequality \eqref{cv4} and  thus consequently from \eqref{cv4} we obtain 
\begin{align}
\label{cv5}
&\int_{\mathbb B_{c_1\rho}^{+}(Y^{i})} U^2(X, 0) x_{n+1}^a dX\lesssim \left(\int_{\mathbb B_{c_{1}\rho/2}^+(Y^{i})} U^2(X, 0)  x_{n+1}^a dX\right)^{\beta}\left(M \int_{\mathbb Q_{4c_1\rho}^{+}(Y^i,0)} U(X,t)^2 x_{n+1}^a dXdt \right)^{1-\beta}\\
& \lesssim \left(\int_{\mathbb B_{c_{1}\rho/2}^+(Y^{i})} U^2(X, 0)  x_{n+1}^a dX\right)^{\beta}\left(M \int_{\mathbb Q_{5}^{+}} U(X,t)^2 x_{n+1}^a dXdt\right)^{1-\beta}, \notag\end{align}
since $\mathbb Q_{4c_1\rho}^{+}(Y^i, 0)\subset \mathbb Q_{5}^{+},$ possibly  after choosing $\rho$  smaller if required. Summing over all $Y^{i}\in \mathcal{F}_{2}$ we get
\begin{align}
\notag &\sum_{i=1}^{n_2}\int_{\mathbb B_{c_1\rho}^{+}(Y^{i})} U^2(X, 0) x_{n+1}^a dX\\
\notag&\leq \left(M \int_{\mathbb Q_{5}^{+}} U(X,t)^2 x_{n+1}^a dXdt \right)^{1-\beta} \sum_{i=1}^{n_2}  \left(\int_{\mathbb B_{c_{1}\rho/2}^+(Y^{i})} U^2(X, 0)  x_{n+1}^a dX\right)^{\beta}\\
\notag&\lesssim n_{2}  \left( \int_{\mathbb Q_{5}^{+}} U(X,t)^2 x_{n+1}^a dXdt \right)^{1-\beta}   \left(\int_{\mathbb B_{\rho}^+} U^2(X, 0)  x_{n+1}^a dX\right)^{\beta}\\
\notag&\lesssim n_{2} e^{M_{1}(1-\beta)\beta}\left(M \int_{\mathbb Q_{5}^{+}} U(X,t)^2 x_{n+1}^a dXdt \right)^{1-\beta+\beta(1-\beta)}   \left(\int_{\mathbb B_{\rho/2}^+} U^2(X, 0)  x_{n+1}^a dX\right)^{\beta^2}\, \,\,  (\text{using} \eqref{cv1})\\
&\lesssim n_{2} e^{M_{1}(1-\beta ^2)}\left(M \int_{\mathbb Q_{5}^{+}} U(X,t)^2 x_{n+1}^a dXdt \right)^{1-\beta^2}   \left(\int_{\mathbb B_{\rho}^+} U^2(X, 0)  x_{n+1}^a dX\right)^{\beta^2}. \label{cv6}
\end{align}
Combining \eqref{cv3} and \eqref{cv6} we observe that there exist constants $\tilde{\beta}\in (0, 1)$ and $N$ such that
\begin{align*}
\int_{\mathbb B_{101\rho/100}^{+}} U^2(X, 0) x_{n+1}^a dX\lesssim N e^{M_{1}(1-\tilde{\beta})}  \left(N\int_{\mathbb Q_5^{+}} U(X,t)^2 x_{n+1}^a dXdt \right)^{1-\tilde{\beta}}\left(\int_{\mathbb B_{\rho}^+} U^2(X, 0) x_{n+1}^a dX\right)^{\tilde{\beta}}.
\end{align*}
Now, iterating the above mentioned process finitely many times we obtain 
\begin{align*}
\int_{\mathbb B_{1}^{+}} U^2(X, 0) x_{n+1}^a dX\lesssim N e^{M_{1}(1-\tau)}  \left(N \int_{\mathbb Q_5^{+}} U(X,t)^2 x_{n+1}^a dXdt \right)^{1-\tau}\left(\int_{\mathbb B_{\rho}^+} U^2(X, 0) x_{n+1}^a dX\right)^{\tau},
\end{align*}
for some universal constant $N,$ possibly depending on $\rho$ and for some $\tau\in (0, 1).$ The above inequality implies that
\begin{align}
\label{thetacon}
\Theta_{\rho}^{\tau}\leq N_{0}^{2-\tau} e^{M_{1}(1-\tau)}  \rho^{-2\tau} \Theta,
\end{align}
which is the conclusion of the lemma.
\end{proof}
Using Lemma \ref{theta} in Theorem \ref{db1}, we obtain the following sharp doubling inequality where the doubling constant now depends on $\Theta_\rho$ instead of $\Theta$. More precisely, incorporating Lemma \ref{theta} in Theorem \ref{db1}, we see that  
\begin{align*} \int_{\mathbb B_{2r}^{+}}  U(X,0)^2  x_{n+1}^a dX \leq \mathcal {K}\int_{\mathbb B_{r}^{+}}   U(X,0)^2 x_{n+1}^a dX\end{align*}
holds, where
\begin{align*}
 \mathcal{K}&:=\exp\bigg \{M(\log(M(1+\|V\|_{1})N_{0}^2 e^{M_{1}{(1-\tau)}/{\tau}} \rho^{-2} \Theta^{1/\tau}))+M\|V\|_{1}^{1/2s}\bigg\}\\
&\leq \exp\bigg \{ M\log( M N_{0}^2\rho^{-2} \Theta^{1/\tau}\big)+M \log(1+\|V\|_{1})+M M_{1}(1-\tau)/\tau+M\|V\|_{1}^{1/2s}\bigg\}. 
 \end{align*}
Finally keeping in mind that $M_{1}=M(\log({M}(1+\|V\|_{1})))+ M\|V\|_{1}^{1/2s},$ and $\log(1+\|V\|_{1})\leq C_{s} \|V\|_{1}^{1/2s},$ we conclude the following
\begin{theorem}\label{db2}
		Let $U$ be a solution of \eqref{exprob} in $\mathbb Q_5^+.$ There exists a universal large constant $M>2$, depending on $n$, $a$  such that for $r < 1/2,$ we have 
			\begin{align} \int_{\mathbb B_{2r}^{+}}  U(X,0)^2  x_{n+1}^a dX \leq N\int_{\mathbb B_{r}^{+}}   U(X,0)^2 x_{n+1}^a\label{dbthin1} dX,\end{align}
where $N=\exp\{M(\log(M(1+\|V\|_{1})\Theta)+\|V\|_{1}^{1/2s})\}$.
	\end{theorem} 	
\subsection{Vanishing estimate  on cylinders}\label{dbcyl}
In this subsection, we derive our main quantitative vanishing  estimate on space time cylinders  for the extension problem \eqref{exprob} as in Theorem \ref{main-doubling-cylinder}. As mentioned in the introduction, starting from Theorem \ref{db2}, if one follows the arguments in \cite{EFV_2006} or \cite{ABDG} that leads to a space time doubling inequality from  a space like doubling inequality  as in \eqref{dbthin1} above, one obtains a doubling constant of the type $\exp\{M(\log(M(1+\|V\|_{1})\Theta)+\|V\|_{1}^{1/2s}\log \|V\|_1)\}$ which then leads to a vanishing order estimate where $\mathcal{N}$ in \eqref{df} now depends on $\|V\|_1^{1/2s}\log \|V\|_{1}$ instead of $\|V\|_1^{1/2s}$. This discrepancy in the nature of the two doubling constants can be seen in \cite[Theorem 3.5]{ABDG}. Therefore we instead argue by a compactness argument to obtain a space time vanishing order estimate  for \eqref{exprob} with the right quantitative dependence. The corresponding result is as follows.
\begin{theorem}
\label{main-doubling-cylinder}
Let $U$ be a solution of \eqref{exprob} in $\mathbb Q_5^{+}$. Then there exists  $r_1(U)>0$ such that 
\begin{align}\label{noorder1}
\int_{\mathbb Q_{r}^{+}} U(X,t)^2 x_{n+1}^a dXdt  \geq r^{N_1},
\end{align}
holds for all $r \leq r_1(U)$ where $N_1= M(\textstyle{(\int_{\mathbb B_{1}^+} U(X,0)^2 x_{n+1}^a dX)^{-1}}+\log (M\Theta)+\|V\|_{1}^{1/2s})$ where $M$ is some universal constant. \end{theorem}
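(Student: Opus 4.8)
The plan is to run an indirect compactness argument that upgrades the space-like doubling inequality of Theorem \ref{db2} to a space-time vanishing order estimate with the sharp dependence $\|V\|_1^{1/2s}$ rather than $\|V\|_1^{1/2s}\log\|V\|_1$. First I would fix the normalization: by scaling and by the doubling inequality \eqref{dbthin1}, it suffices to prove \eqref{noorder1} for solutions $U$ of \eqref{exprob} normalized so that $\int_{\mathbb B_1^+} U(X,0)^2 x_{n+1}^a\, dX = 1$, with a bound on $\Theta$; the general case follows by the usual rescaling $U \mapsto U(\rho X,\rho^2 t)/\|U(\cdot,0)\|_{L^2(\mathbb B_\rho^+, x_{n+1}^a)}$ and tracking how $\Theta_\rho$, $\|V\|_1$ transform. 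The key is to argue by contradiction: suppose no such $r_1(U)$ exists, i.e. there is a sequence of radii $r_j \to 0$ and solutions $U_j$ (after the above normalization) with $\int_{\mathbb Q_{r_j}^+} U_j^2 x_{n+1}^a \to 0$ faster than any polynomial rate $r_j^{N_1}$.

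Next I would set up the blowup. Using the space-like doubling inequality \eqref{dbthin1} together with the quantitative monotonicity in time Corollary \ref{mont-cor}, one controls $\int_{\mathbb Q_r^+} U^2 x_{n+1}^a\, dX dt$ from below by $r^2 \inf_{0\le t\le c r^2}\int_{\mathbb B_r^+} U(X,t)^2 x_{n+1}^a\, dX$ and hence by a constant multiple of $r^2 e^{-\|V\|_1^{1/2s}}\int_{\mathbb B_r^+} U(X,0)^2 x_{n+1}^a\, dX$, provided $r^2$ is below the threshold in Corollary \ref{mont-cor}; combined with \eqref{dbthin1} this already gives a space-time doubling inequality with constant $N$ comparable to $\exp\{M(\log(M(1+\|V\|_1)\Theta)+\|V\|_1^{1/2s})\}$ as long as $r$ stays below the (potential-dependent) threshold $\sim (\|V\|_1^{1/2s}\log(M(1+\|V\|_1)\Theta))^{-1}$. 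The subtlety — and this is the crux — is that iterating this doubling down to scale $0$ costs one logarithmic factor per dyadic step only while $r$ is above that threshold, and below it the monotonicity lemma can no longer be applied directly; naively patching across the threshold is exactly what produces the spurious $\log\|V\|_1$. To avoid it, I would instead extract the blowup sequence $V_j(X,t) := U_j(\rho_j X,\rho_j^2 t)/\big(\textstyle\int_{\mathbb B_{\rho_j}^+} U_j(X,0)^2 x_{n+1}^a dX\big)^{1/2}$ at a well-chosen sequence of scales $\rho_j\downarrow 0$, observe that the rescaled potentials $\tilde V_j(X,t)=\rho_j^{2s} V_j$ satisfy $\|\tilde V_j\|_1 \to 0$, and use the uniform space-like doubling bound (which is scale-invariant) plus interior regularity (Lemma \ref{reg1}) to get $H^\alpha$ bounds uniform in $j$; then pass to a subsequential limit $V_\infty$.

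Finally I would identify the limit and derive the contradiction. By construction $\int_{\mathbb B_1^+} V_\infty(X,0)^2 x_{n+1}^a\, dX = 1$, $V_\infty$ solves the homogeneous extension problem \eqref{homog} (since $\tilde V_j\to 0$), and the failure of the vanishing rate forces $V_\infty(X,0)\equiv 0$ on $\mathbb B_1 \times \{0\}$ — more precisely, the contradiction hypothesis gives $\int_{\mathbb Q_r^+} V_\infty^2 x_{n+1}^a = 0$ for all small $r$, hence $V_\infty \equiv 0$ near $(0,0)$. An application of the weak unique continuation result Proposition \ref{wucp} then forces $V_\infty \equiv 0$ on $\mathbb B_1^+\times(-1,0]$ (after reversing time), contradicting $\int_{\mathbb B_1^+} V_\infty(X,0)^2 x_{n+1}^a = 1$. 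Chasing the quantitative thresholds through the scaling shows that the largest scale $r_1(U)$ at which \eqref{noorder1} can fail is controlled by the reciprocal of $N_1 = M\big((\int_{\mathbb B_1^+} U(X,0)^2 x_{n+1}^a dX)^{-1} + \log(M\Theta) + \|V\|_1^{1/2s}\big)$, which is exactly the asserted exponent. The main obstacle I anticipate is the bookkeeping in the compactness step: one must choose the blowup scales $\rho_j$ so that the potential-dependent threshold in Corollary \ref{mont-cor} does not degenerate relative to $\rho_j$ (so that the space-time doubling inequality survives the rescaling with constants uniform in $j$), and simultaneously so that the normalization $\int_{\mathbb B_{\rho_j}^+} U_j(X,0)^2 x_{n+1}^a\, dX$ does not collapse — this is where the precise $\|V\|_1^{1/2s}$-dependence of all the preceding estimates is used in an essential way.
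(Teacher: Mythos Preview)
Your proposal has the right overall shape (contradiction plus blowup), but there is a genuine gap: you are missing the key preliminary step, and the logic in your endgame is inverted. The paper's argument does \emph{not} go through the monotonicity-in-time lemma at all in this theorem; your long discussion of Corollary~\ref{mont-cor} and the $\|V\|_1^{1/2s}\log\|V\|_1$ threshold is a detour. Instead, the first step is to iterate the purely spatial doubling inequality \eqref{dbthin1} at the fixed time slice $t=0$ to obtain the space-like vanishing order estimate
\[
\int_{\mathbb B_r^+} U(X,0)^2 x_{n+1}^a\,dX \ge r^{\tilde N},\qquad \tilde N = \textstyle(\int_{\mathbb B_1^+} U(X,0)^2 x_{n+1}^a dX)^{-1}+M\log(M\Theta)+M\|V\|_1^{1/2s},
\]
valid for all $r\le 1/2$. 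This bound is what makes the blowup well defined and is where the exponent $N_1=2\tilde N$ comes from; without it you cannot control the normalizing denominator $\int_{\mathbb B_{\rho_j}^+}U(X,0)^2 x_{n+1}^a\,dX$ from below, and your compactness step does not get off the ground.

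Second, your description of the contradiction is backwards. After rescaling $U_j(X,t)=U(r_jX,r_j^2t)/c_j$ with $c_j^2=r_j^{-(n+1+a)}\int_{\mathbb B_{r_j/2}^+}U(X,0)^2x_{n+1}^a\,dX$, the normalization gives $\int_{\mathbb B_{1/2}^+}U_j(X,0)^2 x_{n+1}^a\,dX=1$, while the contradiction hypothesis $\int_{\mathbb Q_{r_j}^+}U^2 x_{n+1}^a\le r_j^{2\tilde N}$ together with the estimate above forces $\int_{\mathbb Q_1^+}U_j^2 x_{n+1}^a\,dXdt\lesssim r_j^{\tilde N-2}\to 0$. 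Passing to the limit (via Lemma~\ref{reg1} and Arzel\`a--Ascoli) gives $U_0\equiv 0$ in the \emph{bulk} $\mathbb Q_{3/4}^+$, yet $\int_{\mathbb B_{1/2}^+}U_0(X,0)^2 x_{n+1}^a\,dX=1$ --- an immediate contradiction, since uniform convergence up to $\{t=0\}$ forces $U_0(\cdot,0)\equiv 0$ as well. No appeal to Proposition~\ref{wucp} is needed here; your claim that ``$\int_{\mathbb Q_r^+}V_\infty^2=0$ for all small $r$'' does not follow from the single-scale failure at each $r_j$, and the weak unique continuation step you invoke is both unnecessary and misapplied (it concerns vanishing of the thin trace, not vanishing on a small bulk region).
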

\begin{proof}

The proof is divided into 2 steps.

\medskip

\emph{Step 1:}
Using Theorem \ref{db2}, we first show that 
\begin{align}
\int_{\mathbb B_r^+}U(X,0)^2 x_{n+1}^a dX\geq r^{\tilde N},\label{noorder}
\end{align}
holds for all $r \leq1/2,$ where \begin{equation}\label{N}\tilde N:=\textstyle{(\int_{\mathbb B_{1}^+} U(X,0)^2 x_{n+1}^a dX)^{-1}}+M\log (M\Theta)+M\|V\|_{1}^{1/2s},\end{equation} 
This is seen as follows. 
For notational convenience we denote $N:=\exp\{M(\log(M(1+\|V\|_{1})\Theta)+\|V\|_{1}^{1/2s})\}.$ For $r\leq1/2$ there exists $k$ such that $1\leq 2^k r< 2.$ Iterating the inequality \eqref{dbthin1} $k$ times we obtain
\begin{align}
\notag \int_{\mathbb B_{1}^+} U(X,0)^2 x_{n+1}^a dX &\leq N^k \int_{\mathbb B_r^+}U(X,0)^2 x_{n+1}^a dX\\
\notag &\leq N^{\log_{2}(2/r)}\int_{\mathbb B_r^+}U(X,0)^2 x_{n+1}^a dX\\
\notag &=\left(\f{2}{r}\right)^{\f{\log N}{\log 2}}\int_{\mathbb B_r^+}U(X,0)^2 x_{n+1}^a dX.\end{align}
Since $c\leq 2^c$, the above implies that
\begin{align}
\notag &\implies r^{\f{\log N}{\log 2}}\leq 2^{\textstyle{(\int_{\mathbb B_{1}^+} U(X,0)^2 x_{n+1}^a dXdt)^{-1}}+\f{\log N}{\log 2}}\int_{\mathbb B_r^+}U(X,0)^2 x_{n+1}^a dX\\
&\implies r^{2\f{\log N}{\log 2}+\textstyle{(\int_{\mathbb B_{1}^+} U(X,0)^2 x_{n+1}^a dX)^{-1}}}\leq (2r)^{\f{\log N}{\log 2}+\textstyle{(\int_{\mathbb B_{1}^+} U(X,0)^2 x_{n+1}^a dX)^{-1}}}\int_{\mathbb B_r^+}U(X,0)^2 x_{n+1}^a dX\notag\\ 
&\implies\int_{\mathbb B_r^+}U(X, 0)^2 x_{n+1}^a dX\geq r^{\tilde N},\label{order}
\end{align}
where $\tilde N:=\textstyle{(\int_{\mathbb B_{1}^+} U(X,0)^2 x_{n+1}^a dX)^{-1}}+M\log (M\Theta)+M\|V\|_{1}^{1/2s}.$ In the last inequality in \eqref{order} we used that since $2r \leq 1$, therefore one has  $$(2r)^{\f{\log N}{\log 2}+\textstyle{(\int_{\mathbb B_{1}^+} U(X,0)^2 x_{n+1}^a dX)^{-1}}} \leq 1.$$

\medskip

\emph{Step 2 (Conclusion):}
We show that the conclusion of the Theorem holds with $N_1=2 \tilde N$, i.e. there exists $r_1(U)$ such that for all $r \leq r_{1}(U)$, \eqref{noorder1} holds with $N_1=2\tilde N$.  If not, then there exists a sequence of $\{r_j\}$ with $r_j \to 0$ such that 
\begin{equation}\label{contr1}
\int_{\mathbb Q_{r_j}^{+}} U(X,t)^2 x_{n+1}^a dXdt  \leq r_j^{N_1}.
\end{equation}

Now define
\[
U_j(X,t) = \frac{U(r_jX, r_j^2\, t)}{\bigg(\frac{1}{ r_j^{n+1+a}}\int_{\mathbb B_{r_j/2}^+} U^2(X, 0) x_{n+1}^a dX \bigg)^{1/2}},\,\,\,\ \text{for}\ \ \  j\geq 1. 
\]
Thanks to \eqref{noorder}, each $U_j$ is well defined. We then make the following observations.
\begin{enumerate}[i)]
    \item 
By  normalization and change of variable, \begin{equation}\label{norm}\int_{\mathbb B_{1/2}^+} U_j^2 (X, 0)x_{n+1}^a dX=1.\end{equation}
\item 
Using  \eqref{noorder}, \eqref{contr1} and the fact that $N_1= 2 \tilde N$, we find that
\begin{equation}\label{equi}
\int_{\mathbb Q_1^+} U_j^2\, x_{n+1}^a dX\, dt\leq r_j^{\tilde N-2} 2^{\tilde N} \to 0\ \text{as}\ j \to \infty.\end{equation}

\end{enumerate}
Moreover, $U_j$ solves the following problem in $\mathbb Q_1^+$
\begin{equation}\label{expb1}
\begin{cases}
\operatorname{div}(x_{n+1}^a \nabla U_j) + x_{n+1}^a\partial_t U_j=0,
\\
\py U_j ((x,0),t) = r_j^{1-a} V(r_jx, r_j^2 t) U_j ((x,0),t).
\end{cases}
\end{equation}
Similar to that in \cite{ABDG}, using \eqref{equi} and  the regularity estimates in Lemma \ref{reg1}, we can assert by Arzela-Ascoli, that  up to a subsequence, $\{U_j\}$ converge  to some $U_0$ in $H^{\alpha}(\overline{\mathbb Q_{3/4}^+})$.  Moreover, $U_0$ solves    in $\mathbb Q_{3/4}^+$ \begin{equation}\label{expb2}
\begin{cases}
\operatorname{div}( x_{n+1}^a\nabla U_0) + x_{n+1}^a \partial_t U_0=0,
\\
\py U_0 ((x,0), t) = 0
\end{cases}
\end{equation}
Due to uniform convergence and \eqref{norm}, we have 
\begin{equation}\label{norm1}
\int_{\mathbb B_{1/2}^+} U_0^2(X, 0)x_{n+1}^a dX=1.\end{equation}
On the other hand, from \eqref{equi} and uniform convergence, it also follows that $U_0 \equiv 0$ in $\mathbb Q_{3/4}^+$ which contradicts \eqref{norm1}. The conclusion thus follows.

\end{proof}

\section{Proof of Theorem~\ref{main}}\label{s:proof}

\begin{proof}[Proof of Theorem \ref{main}]

 Following \cite{ABDG} and \cite{Ru1}, we prove Theorem \ref{main} using  a blowup argument. 

We  make the following claim.

\medskip

\noindent\textbf{Claim:} There exists a $\tilde{r}<\f{r_{1}(U)}{2}$ such that \begin{align}
\label{vanvanvan}
	\int_{Q_{{r}}} u^2(x, t)\, dx dt\geq \, r^{2 N_1}.  
\end{align}
for all $r\leq \tilde{r},$ where $N_1$ and $r_1(U)$ are as in Theorem \ref{main-doubling-cylinder}.  The conclusion of the Theorem would then follow by letting $\mathcal{N}= 2N_1$.

On the contrary, let us assume the \emph{Claim} is not true, then there exists a decreasing sequence of radii $\{r_j\}_{j}$ such that $r_{j}\to 0$ and 
\begin{align}
\label{contra}
 \int_{Q_{{r_{j}}}} u^2(x, t) dxdt< \, r_{j}^{2N_1}.  
\end{align}
Recall that $U$ is the solution of the extension problem corresponding to $u$ as in  \eqref{exprob}. Now define
\[
U_j(X,t) = \frac{U(r_jX, r_j^2 t)}{ \bigg(\frac{1}{r_j^{n+3+a}}\int_{\mathbb Q_{r_j}^+} U^2 x_{n+1}^a dX dt\bigg)^{1/2}},\,\,\,\ \text{for}\ \ \  j\geq 1. 
\]
Thanks to \eqref{noorder1}, each $U_j$ is well defined. We then make the following observations.
\begin{enumerate}[i)]
    \item 
By the normalization, $\int_{\mathbb Q_1^+} U_j^2 x_{n+1}^a dX dt=1.$
\item 
Using the ``qualitative'' space time  doubling inequality in  \cite[Theorem 3.5 (iii)]{ABDG}  and change of variable we get \begin{equation}\label{nondeg}
\int_{\mathbb Q_{1/2}^+} U_j^2 x_{n+1}^a dX dt =\f{\int_{\mathbb Q_{{r_j}/{2}}^+} U^2 x_{n+1}^a dX dt}{\int_{\mathbb Q_{r_j}^+} U^2 x_{n+1}^a dX dt}\geq c>0.
\end{equation}
\end{enumerate}
Moreover, $U_j$ solves the following problem in $\mathbb Q_1^+$
\begin{equation}\label{expb1}
\begin{cases}
\operatorname{div}(x_{n+1}^a \nabla U_j) + x_{n+1}^a\partial_t U_j=0,
\\
\py U_j ((x,0),t) = r_j^{1-a} V(r_jx, r_j^2 t) U_j ((x,0),t).
\end{cases}
\end{equation}
Similar to that in \cite{ABDG}, using the regularity estimates in Lemma \ref{reg1} and Arzela-Ascoli, up to a subsequence we have that $\{U_j\}$ converge  to some $U_0$ in $H^{\alpha}(\overline{\mathbb Q_{3/4}^+})$.  Moreover, $U_0$ solves    in $\mathbb Q_{3/4}^+$ \begin{equation}\label{expb2}
\begin{cases}
\operatorname{div}( x_{n+1}^a\nabla U_0) + x_{n+1}^a \partial_t U_0=0,
\\
\py U_0 ((x,0), t) = 0
\end{cases}
\end{equation}

A change of variable and \eqref{noorder1} gives
\begin{align*}
\int_{Q_1}  U_j((x,0),t)^2 dxdt&\leq r_j^{a+1-N_1} \int_{Q_{r_j}} U((x,0),t)^2 dx dt\\
&= r_j^{a+1-N_1} \int_{Q_{r_j}} u(x,t)^2 dx dt\\
&\stackrel{\eqref{contra}}{<} r_j^{a+1+N_1}.
\end{align*}
Then by taking limit $j\to \infty$ we get $\int_{Q_1}  U_j((x,0),t)^2 dxdt\to 0$ as $j\to \infty.$ Since $U_j \to U_0$ uniformly in $\mathbb Q_{1/2}^+$ up to $\{x_{n+1}=0\}$, we must have $U_0 \equiv 0$ in $\mathbb Q_{1/2} \cap \{x_{n+1}=0\}$. Again, since $U_0$ solves the problem \eqref{expb2}, applying weak unique continuation result as in Proposition \ref{wucp}, we conclude that $U_0 \equiv 0$ in $\mathbb Q_{1/2}^+$. But this is a contradiction, as $U_j\to U_{0}$ uniformly in $\mathbb Q_{1/2}^+$, from the non-degeneracy condition \eqref{nondeg} we have $\int_{\mathbb Q_{1/2}^+} U_0^2 x_{n+1}^a dX dt>0.$ Hence the \emph{Claim} is true. This completes the proof by observing that we are working with the backward version of the problem as in \eqref{exprob}.

\end{proof}

\end{document}